\documentclass[reqno]{amsart}
\usepackage{amsmath, amssymb, amsthm, epsfig}
\usepackage{hyperref, latexsym}
\usepackage{url}
\usepackage[mathscr]{euscript}

\usepackage{color}
\usepackage{fullpage} 
\usepackage{setspace}

\onehalfspacing

\newcommand{\new}[1]{\textcolor{black}{#1}}

\def\today{\ifcase\month\or
  January\or February\or March\or April\or May\or June\or
  July\or August\or September\or October\or November\or December\fi
  \space\number\day, \number\year}
\DeclareMathOperator{\sgn}{\mathrm{sgn}}

 \newtheorem{theorem}{Theorem}
  
 \newtheorem{lemma}[theorem]{Lemma}
 \newtheorem{proposition}[theorem]{Proposition}
 \newtheorem{corollary}[theorem]{Corollary}
 \theoremstyle{definition}

 \theoremstyle{remark}

 \newcommand{\mc}{\mathcal}

 \newcommand{\J}{\mc{J}}

 \newcommand{\U}{\mc{U}}

 \newcommand{\C}{\mathbb{C}}
 \newcommand{\R}{\mathbb{R}}
 \newcommand{\N}{\mathbb{N}}
 
 \newcommand{\Z}{\mathbb{Z}}

 \newcommand{\hh}{\tfrac12}

 \newcommand{\dt}{\text{\rm d}t}
 \newcommand{\du}{\text{\rm d}u}

 \newcommand{\dx}{\text{\rm d}x}

\newcommand{\dalpha}{\text{\rm d}\alpha}

 \newcommand{\dmu}{\text{\rm d}\mu}

\newcommand{\sumstar}{\sideset{}{^\star}\sum}

\newcommand{\hP}{\widetilde{\Phi}}
\newcommand{\ov}{\overline}
\renewcommand{\H}{\mc{H}}
\newcommand{\im}{{\rm Im}\,}
\newcommand{\re}{{\rm Re}\,}

\begin{document}

\title[Pair correlation of zeros of the Riemann zeta-function]{Hilbert spaces and the pair correlation \\ of zeros of the Riemann zeta-function}
\author[Carneiro, Chandee, Littmann and Milinovich]{Emanuel Carneiro, Vorrapan Chandee, Friedrich Littmann and Micah B. Milinovich}
\date{\today}
\subjclass[2000]{11M06, 11M26, 46E22, 41A30.}
\keywords{Riemann zeta-function, pair correlation, extremal functions, exponential type, reproducing kernel, de Branges spaces}
\address{IMPA - Instituto de Matem\'{a}tica Pura e Aplicada - Estrada Dona Castorina, 110, Rio de Janeiro, RJ, Brazil 22460-320}
\email{carneiro@impa.br}
\address{Department of Mathematics, Burapha University, 169 Long-Hard Bangsaen Road, Saen Sook Sub-district, Mueang District, Chonburi 20131, Thailand}
\email{vorrapan@buu.ac.th}
\address{Department of mathematics, North Dakota State University, Fargo, ND 58105-5075 USA}
\email{friedrich.littmann@ndsu.edu}
\address{Department of Mathematics, University of Mississippi, University, MS 38677 USA}
\email{mbmilino@olemiss.edu}

\allowdisplaybreaks
\numberwithin{equation}{section}

\maketitle


\begin{abstract}
Montgomery's pair correlation conjecture predicts the asymptotic behavior of the function $N(T,\beta)$ defined to be the number of pairs $\gamma$ and $\gamma'$ of ordinates of nontrivial zeros of the Riemann zeta-function satisfying $0<\gamma,\gamma'\leq T$ and $0 < \gamma'-\gamma \leq 2\pi \beta/\log T$ as $T\to \infty$. In this paper, assuming the Riemann hypothesis, we prove upper and lower bounds for $N(T,\beta)$, for all $\beta >0$, using Montgomery's formula and some extremal functions of exponential type. These functions are optimal in the sense that they majorize and minorize the characteristic function of the interval $[-\beta, \beta]$ in a way to minimize the $L^1\big(\R, \big\{1 -   \big(\frac{\sin \pi x}{\pi x}\big)^2 \big\}\,\dx\big)$-error. We give a complete solution for this extremal problem using the framework of reproducing kernel Hilbert spaces of entire functions. This extends previous work by P. X. Gallagher \cite{G} in 1985, where the case $\beta \in \frac12 \N$ was considered using non-extremal majorants and minorants.
\end{abstract}


\section{Introduction}

 Let $\zeta(s)$ denote the Riemann zeta-function. Understanding the distribution of the zeros of $\zeta(s)$ is an important problem in number theory. In this paper, assuming the Riemann hypothesis (RH), we study the pair correlation function
\begin{equation*}\label{N alpha}
N(T,\beta) \, := \!\!\! \sum_{\substack{ 0< \gamma,\gamma' \le T \\ 0< \gamma'-\gamma \le \frac{2\pi \beta}{\log T} } } \!\!\!\!\! 1\,,
\end{equation*}
where the sum runs over two sets of nontrivial zeros $\rho=\frac{1}{2}+i\gamma$ and $\rho'=\frac{1}{2}+i\gamma'$ of $\zeta(s)$. Here and throughout the text, all sums involving the zeros of $\zeta(s)$ are counted with multiplicity. The pair correlation conjecture of H. L. Montgomery \cite{M1} asserts that
\begin{equation}\label{PCC}
N(T,\beta) \, \sim \, N(T) \int_0^\beta \left\{1 -\Big(\frac{\sin \pi x}{\pi x}\Big)^2 \right\}  \,\dx
\end{equation}
for any fixed $\beta>0$ as $T\to \infty$, where $N(T)$ denotes the number of nontrivial zeros of $\zeta(s)$ with ordinates $\gamma$ satisfying $0<\gamma\le T$. It is known that
\begin{equation}\label{N}
N(T) \, :=\sum_{0<\gamma \le T} 1 \, \sim \, \frac{T \log T }{2\pi}
\end{equation}
as $T\to \infty$. Therefore, if we let $0 < \gamma_1 \le \gamma_2 \le \ldots $ denote the sequence of ordinates of nontrivial zeros of $\zeta(s)$ in the upper half-plane, it follows that average size of $\gamma_{n+1}-\gamma_n$ is about $2\pi/\log \gamma_n$. Thus, the quantity $N(T,\beta)$ essentially counts the number of pairs $0<\gamma,\gamma'\leq T$ of (not necessarily consecutive) ordinates of nontrivial zeros of $\zeta(s)$ whose difference is less than or equal to $\beta$ times the average spacing. It is known that the function $N(T,\beta)$ is connected to the distribution of primes in short intervals, see \cite{GaMu,Gold2,GM}.

\smallskip

Montgomery's pair correlation conjecture is a special case of the more general conjecture that the normalized spacings between the ordinates of the nontrivial zeros of $\zeta(s)$ follow the GUE distribution from random matrix theory. In his original paper \cite{M1}, Montgomery gave some theoretical evidence for the pair correlation conjecture, and later, Odlyzko  \cite{O} provided numerical evidence. Higher correlations of the zeros of $\zeta(s)$, and of the zeros of more general $L$-functions, were studied by Hejhal \cite{H} and by Rudnick and Sarnak \cite{RS2}.

\smallskip

If the asymptotic formula in \eqref{PCC} remains valid when $\beta=\beta(T) \to \infty$ (sufficiently slowly) as $T\to\infty$, one should expect
\begin{equation}\label{PCC2}
N(T,\beta) \sim N(T) \left\{ \beta-\frac{1}{2}+\frac{1}{2\pi^2 \beta} + O\left(\frac{1}{\beta^2}\right) \right\}
\end{equation}
as $T\to\infty$, where the implied constant is independent of $\beta$.  Using techniques of Selberg, Fujii \cite{F2} proved the unconditional estimate
\begin{equation}\label{Fujii}
N(T,\beta) = N(T) \, \big\{ \beta + O(1) \big\}
\end{equation}
for $\beta =O(\log T)$. This improved upon an earlier result of Mueller (unpublished but announced in \cite{G}). 

\smallskip

\subsection{Montgomery's formula and bounds for the pair correlation} For our purposes we define a class of {\it admissible functions} consisting of all $R \in L^1(\R)$ whose Fourier transform  
\begin{equation*} 
\widehat{R}(t) = \int_{-\infty}^{\infty}e^{-2\pi i x t}\,R(x)\, \dx
\end{equation*}
is supported in $[-1,1]$. By the Paley-Wiener theorem, this class of admissible functions is exactly the class of entire functions of exponential type\footnote{An entire function $g: \mathbb C \rightarrow \mathbb C$ has exponential type at most $2\pi\Delta$ if, for all $\epsilon > 0$, there exists a positive constant $C_\epsilon$ such that $|g(z)| \leq C_{\epsilon}\,e^{(2\pi\Delta + \epsilon)z}$ for all $z \in \C$.} at most $2\pi$ whose restriction to the real axis is integrable. An important tool in the study of the correlation of zeros of $\zeta(s)$ is {\it Montgomery's formula}\footnote{This is not Montgomery's original version of his formula. For a derivation of \eqref{Mont_formula}, see the appendix of \cite{G} or \S2.1 below.}, which asserts that, for an admissible function $R$, under RH,  we have
\begin{align}\label{Mont_formula} 
\begin{split}
\lim_{T \rightarrow \infty}\frac{1}{N(T)}  \sum_{0 < \gamma, \gamma' \leq T} & R\!\left( (\gamma' \! - \! \gamma)  \tfrac{\log T}{2\pi}\right)  w(\gamma' \!-\! \gamma)\\
& = \  R(0) +\int_{-\infty}^{\infty} R(x)  \left\{1 - \left(\frac{\sin \pi x}{\pi x}\right)^2 \right\}\,\dx,
\end{split}
\end{align}
 where $w(x) = 4/(4+x^2)$ is a suitable weight function.
 
 \smallskip
 
Following Gallagher's \cite{G} notation, for an admissible function $R$ we define 
\begin{equation}\label{def-of-M}
M(R) := \int_{-\infty}^{\infty} R(x)  \left\{1 - \left(\frac{\sin \pi x}{\pi x}\right)^2 \right\}\,\dx,
\end{equation}
and, for $\beta >0$, we write
\begin{equation*}
\mathcal{U}(\beta):=\limsup_{T\to \infty} \frac{N(T,\beta)}{N(T)} \quad \text{ and } \quad \mathcal{L}(\beta):=\liminf_{T\to \infty}  \frac{N(T,\beta)}{N(T)}.
\end{equation*}
Let $R_{\beta}^{\pm}$ be a pair of admissible functions satisfying 
\begin{equation}\label{Intro_R_beta_pm}
R_{\beta}^-(x) \leq \chi_{[-\beta,\beta]}(x) \leq R_{\beta}^+(x)
\end{equation}
for all $x \in \R$. \new{Then, if we let 
\begin{equation*}
N^*(T) = \sum_{0<\gamma \le T} m_\gamma,
\end{equation*}
where $m_\gamma$ denotes the multiplicity of a zero of $\zeta(s)$ with ordinate $\gamma$, we observe that }
\begin{align}\label{Intro_char_beta_reduction}
\begin{split}
\frac{1}{N(T)}  \sum_{0 < \gamma, \gamma' \leq T} &  \!R_{\beta}^{+}\left( (\gamma' \! - \! \gamma)  \tfrac{\log T}{2\pi}\right)  w(\gamma' \!-\! \gamma) \\
& \geq R_{\beta}^{+}(0)\frac{N^*(T)}{N(T)} + \frac{2\, N(T, \beta)}{N(T)} + O_{\beta}\left(\frac{1}{(\log T)^2}\right)
\end{split}
\end{align}
\new{and, similarly, that
\begin{align}\label{Intro_char_beta_reduction2}
\begin{split}
\frac{1}{N(T)}  \sum_{0 < \gamma, \gamma' \leq T} &  \!R_{\beta}^{-}\left( (\gamma' \! - \! \gamma)  \tfrac{\log T}{2\pi}\right)  w(\gamma' \!-\! \gamma) \\
& \leq R_{\beta}^{-}(0)\frac{N^*(T)}{N(T)} + \frac{2\, N(T, \beta)}{N(T)} + O_{\beta}\left(\frac{1}{(\log T)^2}\right).
\end{split}
\end{align}
Observing that $N(T) \le N^*(T)$ for all $T>0$ and combining the estimates in \eqref{Mont_formula}, \eqref{def-of-M}, \eqref{Intro_R_beta_pm}, \eqref{Intro_char_beta_reduction} and \eqref{Intro_char_beta_reduction2}, we arrive at the following result.}

\begin{theorem}\label{Intro_thm1_Gallagher}
Assume RH. For any $\beta >0$ we have
\begin{equation}\label{Intro_thm1_eq1}
\frac{1}{2} M(R_{\beta}^-) \leq \mathcal{L}(\beta) \leq \mathcal{U}(\beta) \leq \frac{1}{2} M(R_{\beta}^+),
\end{equation} 
where the lower bound holds if we assume that almost all zeros of $\zeta(s)$ are simple in the sense that 
\begin{equation}\label{N star}
\lim_{T \to \infty} \frac{N^*(T)}{N(T)} = 1.
\end{equation}
\end{theorem}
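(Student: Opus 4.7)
The plan is to derive both bounds in \eqref{Intro_thm1_eq1} by applying Montgomery's formula \eqref{Mont_formula} to the majorant $R_\beta^+$ and to the minorant $R_\beta^-$ respectively, and then to compare the resulting limits with the two-sided sum inequalities \eqref{Intro_char_beta_reduction} and \eqref{Intro_char_beta_reduction2}. Since both $R_\beta^\pm$ are admissible, Montgomery's formula tells us that, under RH, the left-hand sides of \eqref{Intro_char_beta_reduction} and \eqref{Intro_char_beta_reduction2} converge as $T \to \infty$ to $R_\beta^\pm(0) + M(R_\beta^\pm)$, with $M(\cdot)$ defined by \eqref{def-of-M}. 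The whole argument is then a bookkeeping of these limits together with the trivial observation that $N(T) \le N^*(T)$.

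For the upper bound, I would start from \eqref{Intro_char_beta_reduction} and use that $R_\beta^+(0) \ge \chi_{[-\beta,\beta]}(0) = 1 > 0$, so $R_\beta^+(0)\, N^*(T)/N(T) \ge R_\beta^+(0)$ for all $T$. This yields
\[
\frac{1}{N(T)}  \sum_{0 < \gamma, \gamma' \leq T} R_{\beta}^{+}\!\left((\gamma' \!-\! \gamma)\tfrac{\log T}{2\pi}\right)  w(\gamma' \!-\! \gamma) \ \geq\ R_\beta^+(0) + \frac{2\, N(T,\beta)}{N(T)} + O_\beta\!\left(\frac{1}{(\log T)^2}\right).
\]
Letting $T \to \infty$ on the left via \eqref{Mont_formula} and taking $\limsup$ on the right cancels $R_\beta^+(0)$ and produces $M(R_\beta^+) \ge 2\,\mathcal{U}(\beta)$, which is the claimed upper bound. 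Note that this direction requires no additional hypothesis on $N^*(T)$, exactly because $R_\beta^+(0)$ multiplies a ratio that we only need to bound from below.

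For the lower bound, I would rewrite \eqref{Intro_char_beta_reduction2} as
\[
\frac{2\, N(T, \beta)}{N(T)} \ \geq\ \frac{1}{N(T)}  \sum_{0 < \gamma, \gamma' \leq T} R_{\beta}^{-}\!\left((\gamma'\!-\!\gamma)\tfrac{\log T}{2\pi}\right) w(\gamma'\!-\!\gamma) - R_\beta^-(0)\,\frac{N^*(T)}{N(T)} + O_\beta\!\left(\frac{1}{(\log T)^2}\right).
\]
Here the coefficient $R_\beta^-(0)$ no longer has a definite sign, which is precisely why we need the hypothesis \eqref{N star}: under the assumption that $N^*(T)/N(T) \to 1$, the right-hand side converges as $T \to \infty$ (again using \eqref{Mont_formula}) to $R_\beta^-(0) + M(R_\beta^-) - R_\beta^-(0) = M(R_\beta^-)$. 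Taking $\liminf$ on the left yields $2\,\mathcal{L}(\beta) \ge M(R_\beta^-)$, completing the proof.

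The main subtlety, rather than an obstacle, is the asymmetric way in which the multiplicity-count $N^*(T)$ enters the two inequalities. For the upper bound the inequality $N^*(T) \ge N(T)$ is used in the favorable direction and no simplicity hypothesis is needed, while for the lower bound one genuinely has to control $N^*(T)/N(T)$ from above, which is why \eqref{N star} must be invoked there. The error term $O_\beta((\log T)^{-2})$ is harmless after dividing by $N(T)$ and letting $T \to \infty$.
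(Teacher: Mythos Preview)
Your proof is correct and follows essentially the same route as the paper, which derives Theorem~\ref{Intro_thm1_Gallagher} simply by combining \eqref{Mont_formula}, \eqref{Intro_char_beta_reduction}, \eqref{Intro_char_beta_reduction2} and the trivial inequality $N(T)\le N^*(T)$. One small expository quibble: the reason \eqref{N star} is needed for the lower bound is not really that $R_\beta^-(0)$ may be negative, but rather that when $R_\beta^-(0)>0$ (the typical case, since $R_\beta^-(0)\le 1$), the inequality $N^*(T)/N(T)\ge 1$ points the wrong way for subtracting $R_\beta^-(0)\,N^*(T)/N(T)$; this does not affect the validity of your argument.
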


This result is implicit in the work of Gallagher \cite{G}. The difficult problem here is to construct admissible majorants and minorants for $\chi_{[-\beta,\beta]}$ that optimize the values of $M(R_{\beta}^\pm)$ (and to actually compute these values). In \cite{G}, Gallagher  considered the case $\beta \in \frac12\N$, for which a classical construction of Beurling and Selberg, described in \cite{V}, produces admissible majorants and minorants $r_{\beta}^{\pm}$ that optimize the $L^1(\R)$-distance to $\chi_{[-\beta, \beta]}$ (but not necessarily the $L^1\big(\R, \big\{1 -   \big(\frac{\sin \pi x}{\pi x}\big)^2 \big\}\dx\big)$-distance). When $\beta \in \frac12\N$, the Fourier transforms $\widehat{r}_{\beta}^{\pm}$ have simple explicit representations as finite series, which allowed Gallagher to compute the values of $M(r_{\beta}^{\pm})$ and to show that 
\begin{equation}\label{Intro_BS_quotas}
\frac12 M(r_{\beta}^{\pm}) = \beta - \frac12 \pm \frac12  + \frac{1}{2 \pi^2 \beta} +  O\!\left(\frac{1}{\beta^2 }\right).
\end{equation}
In a second part of his paper \cite{G}, still in the case $\beta \in \frac12\N$, Gallagher solved the {\it two-delta problem} with respect to the pair correlation measure (i.e. to minimize $M(R)$ over the class of nonnegative admissible functions $R$ satisfying  $R(\pm \beta) \geq 1$) and was able to quantify the error between his bounds in Theorem \ref{Intro_thm1_Gallagher} and the theoretical optimal bounds achievable by this method.

\smallskip

In this paper we extend Gallagher's work \cite{G}, providing a complete solution to this problem. The three main features are:

\smallskip

\noindent (i) We find an explicit representation for the reproducing kernel associated to the pair correlation measure, which allows us to use Hilbert spaces techniques to solve the two-delta problem in the general case $\beta >0$. 

\smallskip

\noindent (ii) From the reproducing kernel, we find a suitable de Branges space of entire functions \cite{B} associated to the pair correlation measure. We solve the more general extremal problem of majorizing and minorizing characteristic functions of intervals optimizing a given de Branges metric, which provides, in particular, the optimal values of $M(R_{\beta}^{\pm})$. It turns out that asymptotics in terms of $\beta$ as in \eqref{Intro_BS_quotas} are not easily obtainable for this family, since it involves nodes of interpolation that are roots of equations with algebraic and transcendental terms. This brings us to point (iii).

\smallskip

\noindent (iii) In order to obtain (non-extremal) bounds that can be easily stated in terms of $\beta$, we compute $M(r_{\beta}^{\pm})$, for the family of Beurling-Selberg functions $r_{\beta}^{\pm}$ in the general case $\beta >0$, and prove that Gallagher's asymptotic formula in \eqref{Intro_BS_quotas} continues to hold in this case.

\smallskip

We now describe in more detail each of these three parts of the paper. We start with the third part, which is slightly simpler to state. Similar extremal problems in harmonic analysis have appeared in connection to analytic number theory, in particular to the theory of the Riemann zeta-function. For some recent results of this sort, see \cite{CC, CCM, CS, GG}.

\subsection{Explicit bounds via Beurling-Selberg majorants} Let
\begin{equation}\label{Intro_def_H_0}
H_0(z) = \left( \frac{\sin \pi z}{\pi}\right)^2 \left\{ \sum_{m=-\infty}^{\infty} \frac{\sgn(m)}{(z-m)^2} + \frac{2}{z}\right\}
\end{equation}
and 
\begin{equation}\label{Intro_def_H_1}
H_1(z) = \left( \frac{\sin \pi z}{\pi z}\right)^2.
\end{equation}
For the functions $H^\pm$ defined by $H^{\pm}(z) = H_0(z) \pm H_1(z)$, Beurling \cite{V} showed that 
\begin{equation*}
H^-(x) \leq \sgn(x) \leq H^+(x)
\end{equation*}
for all $x \in \R$, and that these are the unique extremal functions of exponential type $2\pi$ for $\sgn(x)$ (with respect to $L^1(\R)$). Moreover, we have
\begin{equation*}
\int_{-\infty}^{\infty} \big\{ H^+(x) \!-\! \sgn(x) \big\}\,\dx = \int_{-\infty}^{\infty} \big\{\sgn(x) \!-\! H^-(x)\big\}\,\dx = 1.
\end{equation*}
For $\beta >0$, Selberg \cite{V} (see also \cite{SelVol2}) considered the functions
\begin{align}\label{Intro_BS1}
\begin{split}
r_{\beta}^{+}(x):= \tfrac{1}{2} \big\{ H^{+}(x &+ \beta) + H^+(-x + \beta)\big\} \\
& \geq \tfrac{1}{2} \big\{ \sgn (x + \beta) + \sgn(-x + \beta)\big\} = \chi_{[-\beta, \beta]}(x)
\end{split}
\end{align}
and 
\begin{align}\label{Intro_BS2}
\begin{split}
r_{\beta}^{-}(x):= \tfrac{1}{2} \big\{ H^{-}(x &+ \beta) + H^-(-x + \beta)\big\} \\
& \leq \tfrac{1}{2} \big\{ \sgn (x + \beta) + \sgn(-x + \beta)\big\} = \chi_{[-\beta, \beta]}(x).
\end{split}
\end{align}
We remark that here and later, all the discontinuous functions we treat are normalized, i.e. at the discontinuity, the value of the function is the midpoint between the left-hand and right-hand limits. The functions $r_{\beta}^{\pm}$ have exponential type $2\pi$ and are bounded and integrable on $\R$. Therefore, they belong to $L^2(\R)$ and the Paley-Wiener theorem implies that they have continuous Fourier transforms supported in $[-1,1]$. Throughout the text we reserve the notation $r_{\beta}^{\pm}$ for this particular family of functions. In Section \ref{Sec_BS_majorants} we prove the following result.

\begin{theorem}\label{Intro_thm2_Gallagher_2}
Let $\beta >0$ and $r_{\beta}^{\pm}$ be the pair of admissible functions defined by \eqref{Intro_BS1} and \eqref{Intro_BS2}. Then
\begin{align}
\frac12 M(r_{\beta}^{\pm}) & = \Big( \beta \pm \frac{1}{2} \Big) - \frac{1}{2\pi^2 \beta} + \frac{\sin 2\pi\beta }{4\pi^3\beta^2} - \frac{1}{4\pi^2}  \sum_{\substack{n \in\Z }} \frac{\sgn(n^{\pm})}{(n\!-\!\beta)^2}\left( 2 + \frac{\sin 2\pi  \beta}{\pi (n \!-\! \beta)}\right) \label{Intro_value_M_r}\\
& =  \beta - \frac12 \pm \frac12  + \frac{1}{2 \pi^2 \beta} +  O\!\left(\frac{1}{\beta^2 }\right), \nonumber
\end{align} 
where $\sgn(0^{\pm}) = \pm1$.
\end{theorem}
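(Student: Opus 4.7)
The plan is to compute $M(r_{\beta}^{\pm}) = \int r_{\beta}^{\pm}\,\dx - \int r_{\beta}^{\pm}(x) H_1(x)\,\dx$ in closed form, and then extract the asymptotic from it. The first integral is immediate from the classical Beurling identity $\int (H^{\pm} - \sgn)\,\dx = \pm 1$ combined with the symmetric construction in \eqref{Intro_BS1}--\eqref{Intro_BS2}: one obtains $\int r_{\beta}^{\pm}\,\dx = 2\beta \pm 1$. For the second integral, the evenness of $H_1$ together with a translation reduces it to $\int H^{\pm}(y) H_1(y-\beta)\,\dy$. After splitting $H^{\pm} = H_0 \pm H_1$ and inserting the partial-fraction expansion \eqref{Intro_def_H_0} of $H_0$, the task reduces to evaluating the autocorrelation $(H_1 * H_1)(\alpha) := \int H_1(y) H_1(y-\alpha)\,\dy$ at the shifts $\alpha = \beta-m$ for $m \in \Z$, and the single moment integral $\int y\,H_1(y) H_1(y-\beta)\,\dy$. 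The algebraic identity $(\sin\pi y/\pi)^2/(y-m)^2 = H_1(y-m)$, valid for every $m \in \Z$ because $\sin\pi(y-m) = (-1)^m \sin\pi y$, is what drives the reduction: paired with the factor $H_1(y-\beta)$, the $m$-th summand in \eqref{Intro_def_H_0} becomes $H_1(y-m) H_1(y-\beta)$, whose integral over $\R$ equals $(H_1*H_1)(\beta-m)$.

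For the building blocks, Plancherel with $\widehat{H_1}(t) = (1-|t|)_+$ together with elementary integration by parts yields
\[
(H_1 * H_1)(\alpha) = \int_{-1}^{1}(1-|t|)^2 \cos(2\pi\alpha t)\,\dt = \frac{1}{\pi^2\alpha^2} - \frac{\sin 2\pi\alpha}{2\pi^3\alpha^3},
\]
and one uses $\sin 2\pi(\beta-m) = \sin 2\pi\beta$ for integer $m$. For the moment integral I avoid the distributional Fourier transform of $y\,H_1$ (which is not in $L^1(\R)$) by substituting $y = z + \beta/2$: the product $H_1(z+\beta/2) H_1(z-\beta/2)$ is even in $z$, so only the constant shift survives and gives
\[
\int y H_1(y) H_1(y-\beta)\,\dy = \frac{\beta}{2}(H_1*H_1)(\beta) = \frac{1}{2\pi^2\beta} - \frac{\sin 2\pi\beta}{4\pi^3\beta^2}.
\]
Absorbing the extra $m=0$ contribution $\pm(H_1*H_1)(\beta)$ into the main sum through the convention $\sgn(0^\pm) = \pm 1$ and gathering all pieces produces the closed-form identity in \eqref{Intro_value_M_r}.

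For the asymptotic, the decisive tool is the Poisson summation identity $\sum_{n \in \Z}(H_1*H_1)(n+\alpha) = \widehat{H_1}(0)^2 = 1$, which holds because $\widehat{H_1}$ vanishes on $\Z \setminus \{0\}$. Splitting the alternating series according to the sign of $n$ and invoking this identity gives
\[
\sum_{n \in \Z} \sgn(n^{\pm})\,(H_1*H_1)(n-\beta) = \pm (H_1*H_1)(\beta) + 1 - 2\sum_{n \geq 1}(H_1*H_1)(n+\beta),
\]
and the tail is controlled by $\sum_{n \geq 1}(H_1*H_1)(n+\beta) = \frac{1}{\pi^2\beta} + O(\beta^{-2})$, using the trigamma asymptotic $\psi'(1+\beta) = 1/\beta + O(\beta^{-2})$ for the $(n+\beta)^{-2}$ piece and a trivial bound on the oscillatory $(n+\beta)^{-3}$ piece. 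Feeding these back into \eqref{Intro_value_M_r} yields the claimed $\tfrac12 M(r_{\beta}^{\pm}) = \beta - \tfrac12 \pm \tfrac12 + \frac{1}{2\pi^2\beta} + O(\beta^{-2})$. The step I expect to be most delicate is monitoring the apparent singularity of the exact formula when $\beta$ approaches an integer $n$: the terms $2/(n-\beta)^2$ and $\sin(2\pi\beta)/(\pi(n-\beta)^3)$ each blow up, but their algebraic combination remains bounded (equal to $4\pi^2/3 + O((n-\beta)^2)$ by Taylor-expanding $\sin$), and this cancellation must be exhibited cleanly so that the exact identity is meaningful and the error term is uniform in $\beta$.
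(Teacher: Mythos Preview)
Your proposal is correct and takes a genuinely different route from the paper. The paper works on the Fourier side throughout: it writes $M(R) = \widehat{R}(0) - \int_{-1}^1 \widehat{R}(t)(1-|t|)\,\dt$ via Plancherel, computes $\widehat{r}_\beta^{\pm}$ explicitly, and then handles the resulting $\cot(\pi t)$ integrand by expanding it as $i\sum_{|n|\le N} \sgn(n)e^{-2\pi int}$ plus a tail that vanishes by Riemann--Lebesgue. For the asymptotic, the paper shows that the full bilateral series (with every $\sgn(n)$ replaced by $+1$) defines an analytic function $G(\beta)$ whose derivative is identically zero (again via Riemann--Lebesgue against a Dirichlet kernel), and then evaluates $G(0) = \tfrac12$ using the classical closed forms for $\sum_{n\ge 1}\cos(nx)/n^2$ and $\sum_{n\ge 1}\sin(nx)/n^3$. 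Your approach stays entirely on the physical side: the identity $(\sin\pi y/\pi)^2/(y-m)^2 = H_1(y-m)$ reduces the $H_0$ contribution to a lattice sum of autocorrelations $(H_1*H_1)(\beta-m)$, and Poisson summation gives $\sum_{n\in\Z}(H_1*H_1)(\alpha+n) = 1$ immediately because $\widehat{H_1}$ vanishes on $\Z\setminus\{0\}$. This bypasses both the Riemann--Lebesgue arguments and the special series evaluations, and is arguably more self-contained. The paper's approach, on the other hand, is carried out for a general dilation parameter $\Delta\ge 1$ (functions $s_{\Delta,\beta}^{\pm}$ of type $2\pi\Delta$), which is needed later for the $q$-analogue; your method would require a mild adaptation to cover that case. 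One small slip: your displayed identity for $\sum_{n}\sgn(n^{\pm})(H_1*H_1)(n-\beta)$ is off by a term $-(H_1*H_1)(\beta)$; the correct value is $(\pm 1 - 1)(H_1*H_1)(\beta) + 1 - 2\sum_{n\ge 1}(H_1*H_1)(n+\beta)$. Since $(H_1*H_1)(\beta) = O(\beta^{-2})$ this does not affect the final asymptotic.
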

We note that the right-hand side of \eqref{Intro_value_M_r} is a continuous function of $\beta$. In Section \ref{Sec_BS_majorants} we also include a discussion on upper and lower bounds for $N(T,\beta)$, where the parameter $\beta$ is allowed to increase as a function of $T$. 

\begin{figure} \label{figure1}
\includegraphics[scale=.44]{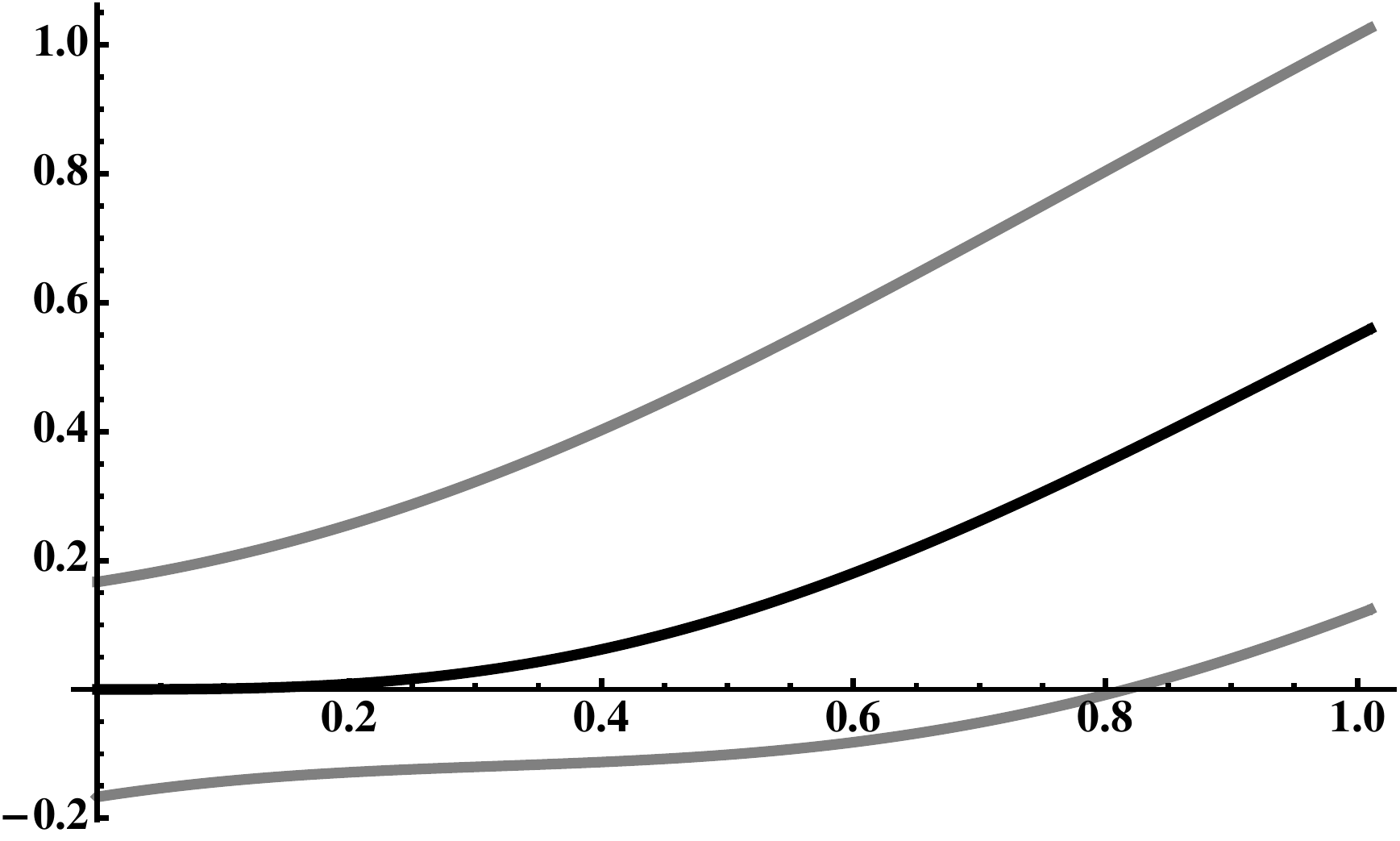} \qquad 
\includegraphics[scale=.44]{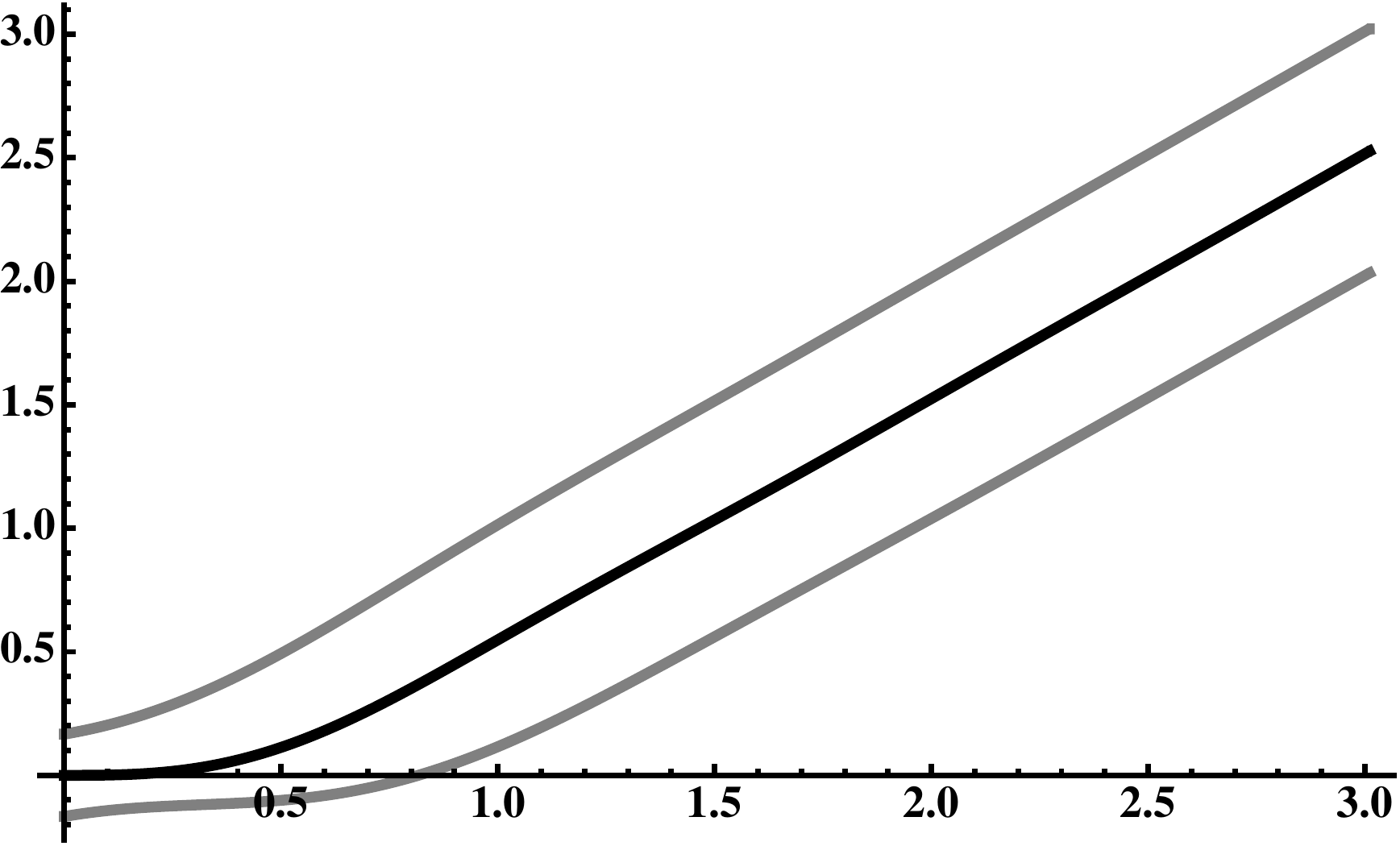} 
\caption{The above images illustrate the inequalities in Theorems \ref{Intro_thm1_Gallagher} and \ref{Intro_thm2_Gallagher_2}.  Montgomery's conjecture for $\lim_{T \to \infty} N(T,\beta)/N(T)$ is plotted in black, while the functions $\beta \mapsto \frac12 M(r_{\beta}^{\pm}) $ are plotted in gray. }
\end{figure}

\subsection{The reproducing kernel for the pair correlation measure}\label{rkhs-pcm}  

\new{The following quantity gives a lower bound for the difference of the values in Theorem \ref{Intro_thm1_Gallagher}.}  For $\beta >0$ we define
\begin{equation}\label{Intro_Delta_beta}
\varDelta(\beta) = \inf_{R \in \Omega_\beta} M(R),
\end{equation}
where the infimum is taken over the subclass $\Omega_\beta$ of nonnegative admissible functions $R$ such that $R(\pm \beta) \geq1$. If $R_{\beta}^{\pm}$ is a pair of admissible functions satisfying \eqref{Intro_R_beta_pm} then $R:= (R_{\beta}^+ - R_{\beta}^-) \in \Omega_\beta$ and 
\begin{equation*}
M(R_{\beta}^+) - M(R_{\beta}^-)  = M(R) \geq \varDelta(\beta).
\end{equation*}
Hence the gap between an upper bound for $\mathcal{U}(\beta)$ and a lower bound for $\mathcal{L}(\beta)$ in Theorem \ref{Intro_thm1_Gallagher} cannot be smaller than $\frac{1}{2}\varDelta(\beta)$.

\smallskip

In the case $\beta \in \frac12\N$, Gallagher \cite[Section 2]{G} used a variational argument to solve this two-delta problem and compute $\varDelta(\beta)$. This argument was previously used by Montgomery and Taylor \cite{M2} to solve the simpler one-delta problem in connection to bounds for the proportion of simple zeros of $\zeta(s)$. Gallagher's variational approach for the two-delta problem relies heavily on the fact that $\beta \in \frac12\N$ to establish orthogonality relations in some passages, thus making its extension to the general case $\beta >0$ a nontrivial task. Here we revisit this problem and solve it in the general case using a different technique, namely the theory of reproducing kernel Hilbert spaces.  Proofs of the theorems in this section are given in Section \ref{HS_approach}.

Let us write 
$$\dmu(x) =  \left\{ 1 - \left(\frac{\sin \pi x}{\pi x}\right)^2\right\} \,\dx.$$
We denote by $\mc{B}_2(\pi,\mu)$ the class of entire functions $f$ of exponential type at most $\pi$ for which
$$\int_{-\infty}^\infty |f(x)|^2 \, \dmu(x) <\infty,$$
and we write $\mc{B}_2(\pi)$ if $\dmu$ is replaced by the Lebesgue measure (i.e. $\mc{B}_2(\pi)$ is the classical Paley-Wiener space). Using the uncertainty principle for the Fourier transform, we show that $\mu$ and the Lebesgue measure define equivalent norms on the class of functions of exponential type at most $\pi$ for which either and hence both norms are finite. This implies, in particular, that $\mc{H} = \mc{B}_2(\pi,\mu)$ is a Hilbert space with norm given by 
$$\|f\|^2_{\mc{H}} = \int_{-\infty}^\infty |f(x)|^2 \, \dmu(x).$$
For each $w \in \C$, the functional $f \mapsto f(w)$ is therefore continuous on $\mc{H}$ (since this holds for the Paley-Wiener space $\mc{B}_2(\pi)$). Hence, there exists a function $K(w,\cdot) \in \mc{H}$ such that 
$$f(w) = \langle f, K(w,\cdot) \rangle_{\mc{H}}  =  \int_{-\infty}^\infty f(x)\, \overline{K(w,x)}\, \dmu(x)$$
for all $f \in \mc{H}$. This is the so-called {\it reproducing kernel} for the Hilbert space $\mc{H}$, and our first goal is to find an explicit representation for this kernel. For $w\in\C$ (initially with $w \neq \pm 1/\pi\sqrt{2}$) define constants $c(w)$ and $d(w)$ by
\begin{align}\label{Intro_HS_eq3}
\begin{split}
c(w) &= \frac{\cos(\pi w) - \pi w \sin(\pi w) }{(1-2\pi^2 w^2 ) \big(\cos\big(2^{-\frac12}\big) - 2^{-\frac12} \sin\big(2^{-\frac12}\big)\big)},\\
d(w) &= \frac{ 2\pi  w \cos(\pi w) }{(1-2\pi^2 w^2 )\, 2^{\frac12} \cos\big(2^{-\frac12}\big) },
\end{split}
\end{align}
and functions $f(w,\cdot) ,g,h\in \mc{H}$ by
\begin{align*}
f(w,z) &=  \frac{2\pi^2  w^2}{(2\pi^2 w^2 - 1)} \frac{\sin\pi(z-w)}{\pi(z-w)}, \\
g(z) &= \frac{2^{\frac12}\sin\big(2^{-\frac12}\big) \cos(\pi z) - 2\pi z \cos\big(2^{-\frac12}\big) \sin(\pi z)}{1- 2\pi^2 z^2},  \\
h(z) &= \frac{2\pi z \sin\big(2^{-\frac12}\big)\cos(\pi z)  - 2^{\frac12} \cos\big(2^{-\frac12}\big) \sin(\pi z)}{1-2\pi^2 z^2}.
\end{align*}

\begin{theorem}\label{Intro_HS_Thm1_RP}
 For each $w\in \C$ we have 
\begin{align}\label{Intro_rk-rep}
K(w,z) &= f(\ov{w},z) + c(\ov{w}) g(z) + d(\ov{w}) h(z).
\end{align}
\new{At the points $w = \pm 1/\pi\sqrt{2}$, this formula should be interpreted in terms of the appropriate limit.}
\end{theorem}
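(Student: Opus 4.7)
The plan is to determine $K(w,\cdot)$ via its Fourier transform. Since $K(w,\cdot)\in \mc{H}\subset \mc{B}_2(\pi)\cap L^2(\R)$, the Paley-Wiener theorem provides a unique $g_w\in L^2[-\h,\h]$ such that
$$K(w,z)\,=\,\int_{-1/2}^{1/2} g_w(t)\,e^{2\pi i z t}\,\dt.$$
The first step is to rewrite the reproducing identity $F(w)=\int_\R F\,\overline{K(w,\cdot)}\,\dmu$ as an equation for $g_w$. Decomposing $\dmu = \dx - \big(\tfrac{\sin\pi x}{\pi x}\big)^{\!2}\dx$, using the Fourier representation
$$\left(\frac{\sin\pi x}{\pi x}\right)^{\!2}\,=\,\int_{-1}^{1}(1-|u|)\,e^{2\pi i x u}\,\du,$$
and applying Parseval (recall that $\widehat F$ is supported in $[-\h,\h]$ for $F\in \mc{H}$), one finds, after exchanging integration orders, that the reproducing property is equivalent to the Fredholm integral equation of the second kind
$$g_w(t)\,-\,\int_{-1/2}^{1/2}g_w(s)\bigl(1-|t-s|\bigr)\,\ds\,=\,e^{-2\pi i\ov{w}\,t},\qquad t\in[-\tfrac12,\tfrac12].$$

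The second step is to solve this integral equation. Differentiating twice in $t$, and using that the second distributional derivative of $|t-s|$ equals $2\delta(t-s)$, collapses the equation to the ordinary differential equation
$$g_w''(t)\,+\,2\,g_w(t)\,=\,-4\pi^2\ov{w}^2\,e^{-2\pi i\ov{w}\,t}.$$
For $\ov{w}\neq\pm 1/\pi\sqrt{2}$ the general solution is
$$g_w(t)\,=\,\frac{2\pi^2\ov{w}^2}{2\pi^2\ov{w}^2-1}\,e^{-2\pi i\ov{w}\,t}\,+\,\alpha\cos(\sqrt{2}\,t)\,+\,\beta\sin(\sqrt{2}\,t),$$
with $\alpha=\alpha(\ov{w})$ and $\beta=\beta(\ov{w})$ to be determined. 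Let $G(t)$ denote the difference of the two sides of the integral equation. Any solution of the ODE satisfies $G''\equiv 0$, so $G$ is affine; imposing the two endpoint conditions $G(\h)+G(-\h)=0$ and $G'(\h)+G'(-\h)=0$ (and verifying that the remaining combinations $G(\h)-G(-\h)$ and $G'(\h)-G'(-\h)$ vanish automatically) yields a linear $2\times 2$ system whose solution is $\alpha=c(\ov{w})$ and $i\beta=d(\ov{w})$.

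The third step is inverse Fourier transformation. Direct computation of the three integrals
$$\int_{-1/2}^{1/2}e^{-2\pi i\ov{w}\,t}\,e^{2\pi izt}\,\dt,\quad\int_{-1/2}^{1/2}\cos(\sqrt{2}\,t)\,e^{2\pi izt}\,\dt,\quad\int_{-1/2}^{1/2}\sin(\sqrt{2}\,t)\,e^{2\pi izt}\,\dt$$
via standard sum-to-product identities yields, respectively, $\sin\pi(z-\ov{w})/\pi(z-\ov{w})$, the function $g(z)$, and $i\,h(z)$. Combining with the coefficients from step two recovers
$$K(w,z)\,=\,f(\ov{w},z)+c(\ov{w})\,g(z)+d(\ov{w})\,h(z).$$
The exceptional points $w=\pm 1/\pi\sqrt{2}$ correspond to resonance in the ODE (the driving frequency $2\pi\ov{w}$ coincides with the natural frequency $\sqrt{2}$); the coefficients $c$ and $d$ develop poles that cancel the pole of $f(\ov w,z)$, so $K(w,z)$ extends continuously there, in accordance with the general fact that a reproducing kernel must be (anti)holomorphic in its first argument.

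The main difficulty is in step one: the careful justification of the exchanges of integration that take the reproducing property into a pointwise integral equation for $g_w$, using only the fact that $\widehat F$ ranges over all of $L^2[-\h,\h]$ as $F$ ranges over $\mc{H}$. Once the integral equation is established, the remainder of the argument is elementary ODE theory and direct calculation.
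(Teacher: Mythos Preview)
Your proof is correct and rests on the same Fourier-side integral equation that the paper uses; the difference is organizational. The paper presents a \emph{verification}: it writes down the candidate $\kappa_w = f(w,\cdot)+c(w)g+d(w)h$, computes directly (via the identities \eqref{HS_cos-int}--\eqref{HS_eq2_0}) that $\big(1-(\tfrac{\sin\pi x}{\pi x})^2\big)\kappa_w(x)$ has Fourier transform equal to $e^{-2\pi i w t}$ on $(-\tfrac12,\tfrac12)$, and concludes by uniqueness of the reproducing kernel. Your argument is the \emph{derivation}: you start from the reproducing identity, reduce it to the Fredholm equation for $g_w$, differentiate twice to obtain the ODE, and solve. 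The paper in fact describes exactly your route in the Remark following its proof, as the method by which the formula was discovered; it then opts to present the cleaner verification instead. Your approach buys constructiveness (no formula needs to be guessed), at the cost of a little extra care: one should note that the integral equation itself bootstraps $g_w$ from $L^2$ to $C^2$, so the two differentiations are legitimate, and one should check that the $2\times 2$ system for $\alpha,\beta$ is nondegenerate (it decouples by parity, since the kernel $1-|t-s|$ preserves even/odd parts, so each unknown is determined by a single scalar equation). Your parenthetical about the ``remaining combinations'' vanishing automatically is slightly redundant---once $G$ is affine, the two symmetric conditions already force $G\equiv 0$---but harmless.
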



\smallskip

We exploit the Hilbert space structure and the explicit formula for the reproducing kernel  to give a complete solution to the two-delta problem with respect to the pair correlation measure. 
\begin{theorem}\label{Intro_HS_Thm5}
Let $\beta >0$,  let $\varDelta(\beta)$ be defined by \eqref{Intro_Delta_beta}, \new{and let $K$ be given by \eqref{Intro_rk-rep}.} Then
\begin{align}\label{Intro_HS_Thm2_eq1}
\begin{split}
\varDelta(\beta) = \frac{2}{ K(\beta, \beta) + |K(\beta, -\beta)|}= 2 \left\{ 1 - \left|\frac{\sin2\pi \beta}{2\pi\beta}\right| \right\} + O\!\left(\frac{1}{\beta^2 }\right).
\end{split}
\end{align}
The extremal functions {\rm(}i.e. functions that realize the infimum in \eqref{Intro_Delta_beta}{\rm )} are given by the following formulae.
\begin{enumerate}
\item[(i)]If $K(\beta, -\beta) = 0$, then
\begin{equation*}
R(z) = \frac{1}{K(\beta, \beta)^2} \big( c_1 K(\beta, z) + c_2 K(-\beta, z) \big)  \big( \ov{c_1} \,K(\beta, z) + \ov{c_2} \,K(-\beta, z) \big),
\end{equation*}
where $c_1, c_2 \in \C$ with $|c_1| = |c_2| =1$. 
\smallskip
\item[(ii)] If $K(\beta, -\beta) \neq 0$, then
\begin{equation*}
R(z) = \frac{\left(\frac{K(\beta, -\beta)}{|K(\beta, -\beta)|} K(\beta, z) + K(-\beta, z)\right)^2}{\big(K(\beta, \beta) + |K(\beta, -\beta)|\big)^2}.
\end{equation*}
\end{enumerate}
\end{theorem}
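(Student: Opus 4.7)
The plan is to convert \eqref{Intro_Delta_beta} into a Hilbert-space minimization in $\mc{H}$, reduce it to a two-dimensional problem via the reproducing kernel, solve it explicitly, and then read off the asymptotics using \eqref{Intro_rk-rep}.

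First, I would invoke the classical Krein--Akhiezer factorization: any nonnegative entire function $R$ of exponential type $2\pi$ that is integrable on $\R$ factors as $R(z) = Q(z)\,Q^{*}(z)$, with $Q^{*}(z) = \ov{Q(\bar z)}$ and $Q$ entire of exponential type at most $\pi$. For $R \in \Omega_\beta$ this gives $M(R) = \int_{-\infty}^{\infty} |Q(x)|^{2}\dmu(x) = \|Q\|_{\mc{H}}^{2}$ and $|Q(\pm\beta)|^{2} \geq 1$. Conversely, starting from $Q \in \mc{H}$ with $|Q(\pm\beta)| \geq 1$, the norm equivalence between $\mc{H}$ and $\mc{B}_{2}(\pi)$ stated in the introduction forces $Q \in L^{2}(\R)$, and setting $R = Q Q^{*}$ recovers an element of $\Omega_\beta$ with $M(R) = \|Q\|_{\mc{H}}^{2}$. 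Thus
\begin{equation*}
\varDelta(\beta) \;=\; \inf\bigl\{\|Q\|_{\mc{H}}^{2} \,:\, Q \in \mc{H},\ |Q(\pm\beta)| \geq 1\bigr\}.
\end{equation*}

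Next, let $V = \mathrm{span}\{K(\beta,\cdot),\,K(-\beta,\cdot)\}$ inside $\mc{H}$. Writing $Q = Q_{V} + P$ with $P \perp V$, the reproducing property yields $P(\pm\beta) = \langle P, K(\pm\beta,\cdot)\rangle_{\mc{H}} = 0$, while $\|Q\|_{\mc{H}}^{2} = \|Q_{V}\|_{\mc{H}}^{2} + \|P\|_{\mc{H}}^{2}$, so the infimum is attained with $P = 0$. The symmetry $\dmu(-x) = \dmu(x)$ gives $K(-\beta,-\beta) = K(\beta,\beta) =: A$, and combined with the Hermitian identity $K(w,z) = \ov{K(z,w)}$ it forces $B := K(\beta,-\beta) \in \R$. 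Parametrising $Q = a_{1} K(\beta,\cdot) + a_{2} K(-\beta,\cdot)$ and introducing the boundary values $u = Q(\beta)$, $v = Q(-\beta)$, a Gram-matrix inversion gives
\begin{equation*}
\|Q\|_{\mc{H}}^{2} \;=\; \frac{A(|u|^{2} + |v|^{2}) - 2B\,\re(u\bar v)}{A^{2} - B^{2}}.
\end{equation*}
Optimising over $|u|,|v| \geq 1$ is elementary: if $B \neq 0$ the minimum is attained when $|u| = |v| = 1$ and $u\bar v = \sgn(B)$, producing $\varDelta(\beta) = 2/(A + |B|)$; if $B = 0$ the phases of $u$ and $v$ decouple, yielding the family in part (i). Back-substituting and using that $K(\pm\beta,z)$ is real on $\R$ (so $\ov{K(\pm\beta,\bar z)} = K(\pm\beta,z)$ by Schwarz reflection) produces the explicit factorised extremals in (i) and (ii).

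For the asymptotic, I would read off $K(\beta,\pm\beta)$ from \eqref{Intro_rk-rep}. The diagonal term $f(\beta,\beta) = 2\pi^{2}\beta^{2}/(2\pi^{2}\beta^{2} - 1) = 1 + O(1/\beta^{2})$ dominates, and direct inspection of \eqref{Intro_HS_eq3} shows that both $c(\beta) g(\beta)$ and $d(\beta) h(\beta)$ are $O(1/\beta^{2})$; hence $K(\beta,\beta) = 1 + O(1/\beta^{2})$. Since $g$ is even and $h$ is odd, a parallel computation gives $K(\beta,-\beta) = f(\beta,-\beta) + c(\beta) g(\beta) - d(\beta) h(\beta) = \sin(2\pi\beta)/(2\pi\beta) + O(1/\beta^{2})$, and a first-order expansion of $2/(A + |B|)$ then delivers the claimed asymptotic. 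The most delicate point is probably the first step, where one must justify the Krein--Akhiezer factorisation inside $\mc{H}$ and reverse it to recover an admissible $R \in L^{1}(\R)$; both directions rest on the norm equivalence between $\mc{H}$ and $\mc{B}_{2}(\pi)$. Beyond that, the argument is a clean reproducing-kernel minimisation.
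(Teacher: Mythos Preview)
Your proof is correct and follows the same overall architecture as the paper: Krein factorisation $R=QQ^{*}$ to land in $\mc{H}$, reduction to the two-dimensional span of $K(\pm\beta,\cdot)$ via the reproducing property, explicit solution of the resulting constrained minimisation, and then the asymptotic read-off from \eqref{Intro_rk-rep}.

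The one place where you diverge is the two-dimensional step. The paper isolates this as an abstract geometric lemma (Lemma~\ref{HS_geometric_lemma}): given $v_1,v_2$ in a Hilbert space with $\|v_1\|=\|v_2\|$, it finds $\min\{\|x\|:\,|\langle x,v_j\rangle|\ge 1\}$ by a symmetry/parallelogram-law trick that forces the coefficients of the extremiser to be complex conjugates of one another. You instead invert the $2\times 2$ Gram matrix to write $\|Q\|_{\mc{H}}^{2}$ directly in terms of the boundary values $u=Q(\beta)$, $v=Q(-\beta)$, and then optimise by the elementary inequality $2\,\re(u\bar v)\le |u|^2+|v|^2$. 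Your route is more computational but arguably more transparent, and it gives the extremals immediately; the paper's lemma is cleaner to state but slightly more opaque in its proof. For the asymptotic, your use of the triangle inequality $\big||B|-|\sin2\pi\beta/2\pi\beta|\big|\le O(\beta^{-2})$ is in fact tidier than the paper's case split on the sign of $K(\beta,-\beta)$.
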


\smallskip

In particular, the bounds given in Theorem \ref{Intro_thm2_Gallagher_2} are optimal up to order $O(\beta^{-2})$ when $\beta \in \frac12 \N$. The appearance of the term $|\frac{\sin2\pi \beta}{2\pi\beta}|$ on the right-hand side of \eqref{Intro_HS_Thm2_eq1} is not a coincidence, for this term already appears naturally in the work of Littmann \cite{Lit} on the Beurling-Selberg extremal problem for $\chi_{[-\beta, \beta]}(x)$. Using the same circle of ideas, one could explicitly compute the reproducing kernels associated to other measures that arise naturally in the study of families of $L$-functions, see \cite{ILS, KS}.

\subsection{An extremal problem in de Branges spaces} 

\subsubsection{De Branges spaces} Let us briefly review the basic facts and terminology of de Branges' theory of Hilbert spaces of entire functions \cite[Chapters 1 and 2]{B}.  A function $F$ analytic in the open upper half-plane $\C^{+} = \{z \in \C;\ \im(z) >0\}$ has {\it bounded type} if it can be written as the quotient of two functions that are analytic and bounded in $\C^{+}$. If $F$ has bounded type in $\C^{+}$, from its Nevanlinna factorization \cite[Theorems 9 and 10]{B} we have
\begin{equation*}
v(F) = \limsup_{y \to \infty} \, y^{-1}\log|F(iy)| <\infty.
\end{equation*}
The number $v(F)$ is called the {\it mean type} of $F$. If $F:\C \to \C$ is entire, we denote by $\tau(F)$ its exponential type, i.e. 
\begin{equation*}
\tau(F) = \limsup_{|z|\to \infty} |z|^{-1}\log|F(z)|,
\end{equation*}
and we define $F^*:\C \to \C$ by $F^*(z) = \overline{F(\overline{z})}$. We say that $F$ is {\it real entire} if $F$ restricted to $\R$ is real-valued.

\smallskip

Let $E:\C \to \C$ be a {\it  Hermite-Biehler} function, i.e. an entire function satisfying the basic inequality
\begin{equation}\label{Intro_HB_cond}
|E(\overline{z})| < |E(z)|
\end{equation}
for all $z \in \C^+$. The {\it de Branges space} $\H(E)$ is the space of entire functions $F:\C \to \C$ such that
\begin{equation}\label{Intro_dB_inequality1}
\|F\|_E^2 := \int_{-\infty}^\infty |F(x)|^{2} \, |E(x)|^{-2} \, \dx <\infty\,,
\end{equation}
and such that $F/E$ and $F^*/E$ have bounded type and nonpositive mean type in $\C^{+}$. The remarkable property  about $\H(E)$ is that it is a reproducing kernel Hilbert space with inner product 
\begin{equation*}
\langle F, G \rangle_{E} =  \int_{-\infty}^\infty F(x) \, \ov{G(x)} \, |E(x)|^{-2} \, \dx.
\end{equation*}
The reproducing kernel (that we continue denoting by $K(w,\cdot)$) is given by (see \cite[Theorem 19]{B})
\begin{equation}\label{Intro_dB_rk}
2\pi i (\ov{w}-z)K(w,z) = E(z)E^*(\ov{w}) - E^*(z)E(\ov{w}). 
\end{equation}
Associated to $E$, we consider a pair of real entire functions $A$ and $B$ such that  $E(z) = A(z) -iB(z)$. These functions are given by 
\begin{equation*}
A(z) := \frac12 \big\{E(z) + E^*(z)\big\} \ \ \ {\rm and}  \ \ \ B(z) := \frac{i}{2}\big\{E(z) - E^*(z)\big\},
\end{equation*}
and the reproducing kernel has the alternative representation 
\begin{equation*}
\pi (z - \ov{w})K(w,z) = B(z)A(\ov{w}) - A(z)B(\ov{w}).
\end{equation*}
When $z = \ov{w}$ we have
\begin{equation}\label{Intro_Def_K}
\pi K(\ov{z}, z) = B'(z)A(z) - A'(z)B(z).
\end{equation}
For each $w \in \C$, the reproducing kernel property implies that 
\begin{align*}
0 \leq \|K(w, \cdot)\|_E^2 = \langle K(w, \cdot), K(w, \cdot) \rangle_E = K(w,w),
\end{align*}
and it is not hard to show (see \cite[Lemma 11]{HV}) that $K(w,w)=0$ if and only if $w \in\R$ and $E(w) = 0$ (in this case we have $F(w) =0$ for all $F \in \H(E)$). 

\smallskip

For our purposes we consider the class of Hermite-Biehler functions $E$ satisfying the following properties:
\begin{enumerate}
\item[(P1)] $E$ has bounded type in $\C^{+}$;
\item[(P2)] $E$ has no real zeros; 
\item[(P3)] $z \mapsto E(iz)$ is a real entire function;
\item[(P4)] $A, B \notin \H(E)$. 
\end{enumerate} 
By a result of M. G. Krein (see \cite{K} or \cite[Lemmas 9 and 12]{HV}) we see that if $E$ satisfies (P1), then $E$ has exponential type and $\tau(E) = v(E)$. Moreover, the space $\H(E)$ consists of the entire functions $F$ of exponential type $\tau(F) \leq \tau(E)$ that satisfy \eqref{Intro_dB_inequality1}.

\subsubsection{\new{De Branges space for the pair correlation measure}} We show that the Hilbert space $\mc{H}$ defined in Section \ref{rkhs-pcm} can be identified  with a suitable de Branges space $\mc{H}(E)$, where $E$ is a Hermite-Biehler function satisfying properties (P1) - (P4). Define
\begin{equation*}
L(w,z) = 2\pi i (\overline{w} - z) K(w,z)\,,
\end{equation*}
where $K$ is given \new{by \eqref{Intro_rk-rep}}. It follows then that the entire function 
\begin{equation}\label{Intro_Def_E_special}
E(z) = \frac{L(i,z)}{L(i,i)^{\frac12}}
\end{equation}
is a Hermite-Biehler function such that 
\begin{equation}\label{Intro_rep_kernel}
L(w,z) = E(z) E^*(\ov{w}) -  E^*(z)E(\ov{w}).
\end{equation}
For the convenience of the reader we include short proofs of these facts in Appendix A.  This implies \cite[Theorem 23]{B} that the Hilbert space $\mc{H}$ is isometrically equal to the de Branges space $\mc{H}(E)$. In particular, the key identity
\begin{equation}\label{Intro_key_id}
\int_{-\infty}^{\infty} |f(x)|^2 \, |E(x)|^{-2}\,\dx = \int_{-\infty}^\infty |f(x)|^2 \, \dmu(x) 
\end{equation}
holds for any $f \in \mc{H}$.

\smallskip

 We now verify (P1) - (P4). It is clear that $E(z)$ has exponential type $\pi$ and is bounded on $\R$. Therefore, by the converse of Krein's theorem (see \cite{K} or \cite[Lemma 9]{HV}), we have that $E$ has bounded type in $\C^+$,  which shows (P1). If $E$ had a real zero $w$, we would have $F(w) = 0$ for all $F \in \H(E) = \H$. However, we have seen that $\H$ is equal (as a set) to the Paley-Wiener space, which is a contradiction. This proves (P2). 

\smallskip 

A direct computation using \eqref{Intro_Def_E_special} and Theorem \ref{Intro_HS_Thm1_RP} shows that $E(ix)$ is real when $x$ is real, which shows (P3). For real $x$ we have $A(x) = \re (E(x))$ and $B(x) = - \im (E(x))$. Since $c(-i), id(-i), g(x)$ and $h(x)$ are all real, a direct computation gives us
\begin{align*}
\begin{split}
A(x) &= \frac{\re(L(i,x))}{L(i,i)^{\frac12}}\\
& = \frac{1}{L(i,i)^{\frac12}} \frac{4 \pi^2}{(2\pi^2 +1)} \cos\pi x \left\{\sinh \pi  + \frac{\tan\big(2^{-\frac12}\big)\,\cosh \pi}{\pi \sqrt{2}}\right\} + O(x^{-1})
\end{split}
\end{align*}
and
\begin{align*}
\begin{split}
B(x) &= - \frac{\im(L(i,x))}{L(i,i)^{\frac12}}\\
& = \frac{1}{L(i,i)^{\frac12}} \frac{4 \pi^2}{(2\pi^2 +1)} \sin\pi x \left\{\cosh \pi  + \frac{(\cosh \pi + \pi \sinh \pi)\cos\big(2^{-\frac12}\big) }{2 \pi^2 \big(\!\cos\big(2^{-\frac12}\big) - 2^{-\frac12} \sin\big(2^{-\frac12}\big)\big)}\right\} + O(x^{-1}),
\end{split}
\end{align*}
for large $x$. This shows that $A,B \notin L^2(\R)$ and thus, by \eqref{Intro_key_id} and Lemma \ref{Lem_equiv_norms} below, $A,B \notin \H(E)$. This proves (P4).

\subsubsection{The extremal problem} \new{We now return to the case of an arbitrary Hermite-Biehler function $E$ satisfying properties (P1) - (P4) above.} From now on we assume, without loss of generality, that $E(0) >0$ (note that this holds for the particular $E$ defined by \eqref{Intro_Def_E_special}). \new{Generalizing \eqref{def-of-M},} let us write
\begin{equation*}
M_{E}( R )= \int_{-\infty}^{\infty} R(x)\,|E(x)|^{-2}\,\dx.
\end{equation*}
For $\beta >0$ we define
\begin{equation}\label{Intro_Def_Lambda_+}
\varLambda_E^+(\beta) = \inf {M_E(R_{\beta}^+)},
\end{equation}
and
\begin{equation}\label{Intro_Def_Lambda_-}
\varLambda_E^-(\beta) = \sup {M_E(R_{\beta}^-)},
\end{equation}
where the infimum and the supremum are taken over the entire functions $R_{\beta}^{\pm}$ of exponential type at most $2\tau(E)$ such that
\begin{equation}\label{char-ineq-E}
R_{\beta}^-(x) \leq \chi_{[-\beta,\beta]}(x) \leq R_{\beta}^+(x)
\end{equation}
for all $x \in \R$.

\smallskip

In its simplest version, for the Paley-Wiener space (which corresponds to $E(z) = e^{-i\pi z}$), this is a classical problem in harmonic analysis with numerous applications to inequalities in number theory and signal processing. Its sharp solution was discovered by Beurling and Selberg \cite{V} when $\beta \in \hh\N$, by Donoho and Logan \cite{DL} when $\beta < \frac12$, and recently by Littmann \cite{Lit} for the remaining cases\footnote{B. F. Logan announced the solution for the general case in the abstract ``Bandlimited functions bounded below over an interval", Notices Amer. Math. Soc., 24 (1977), pp. A331. His proof, however, has never been published.}. Here we provide a complete solution to this optimization problem with respect to a general de Branges metric $L^1(\R, |E(x)|^{-2}\,\dx)$. As in the Paley-Wiener case, there are three distinct qualitative regimes for the solution, and these depend on the roots of $A$ and $B$ (observe that if $E(z) = e^{-i\pi z}$, then $A(z) = \cos \pi z$ and $B(z) = \sin \pi z$, which have roots exactly at $\beta \in \hh \N$). Similar extremal problems in de Branges and Euclidean spaces were considered in \cite{CGon, CL2, HV, Ke, LS}.

\smallskip

Property (P3) implies that $A$ is even and $B$ is odd, and by the Hermite-Biehler condition, $A$ and $B$ have only real zeros. Morever, these zeros are all simple. To see this, note that by \eqref{Intro_Def_K} we see that any double zero $w$ of either $A$ or $B$ implies that $K(w,w)=0$ which would, in turn, imply that $E(w) =0$ in contradiction to (P2). It also follows from well-known properties of Hermite-Biehler functions (see for instance the discussion related to the phase function in \cite[Problem 48]{B} or \cite[Section 3]{HV}) that the zeros of $A$ and $B$ interlace. In our case we have $B(0) = 0$ and $A(0) >0$. If we label the nonnegative zeros of $B$ in order as $0 = b_0 < b_1 < b_2 < \ldots $ and the positive zeros of $A$ as $a_1 < a_2 < \ldots $, then we have
\begin{equation*}
0 = b_0 < a_1 < b_1 < a_2 < b_2 < \ldots 
\end{equation*}

For each $\beta > 0$ that is not a root of $A$ or $B$, we define an auxiliary Hermite-Biehler function $E_{\beta}(z)$. The corresponding companion functions $A_{\beta}(z)$ and $B_{\beta}(z)$ and the reproducing kernel $K_{\beta}(w,z)$ play an important role in the solution of our extremal problem. We divide this construction in two cases, depending on the sign of $A(\beta)B(\beta)$. Since $A(0) >0$ and $B(0)=0$, from \eqref{Intro_Def_K} we find that $B'(0) > 0$. Then,

\smallskip

\noindent(i) if $b_k < \beta < a_{k+1}$, we set  $\gamma_\beta:= \beta B(\beta)/A(\beta) >0;$

\smallskip

\noindent(ii) if $a_k < \beta < b_k$, we set $\gamma_\beta:= -\beta A(\beta)/B(\beta) >0.$

\smallskip

\noindent In either case we now define $E_\beta$ by
\begin{equation}\label{Intro_Def_E_beta_2}
E_{\beta}(z) = E(z)(\gamma_{\beta} - iz).
\end{equation}
\begin{theorem}\label{Intro_thm5_super}
Let $E$ be a Hermite-Biehler function satisfying properties {\rm (P1) - (P4)}. Let $\beta >0$ and $\varLambda_E^{\pm}(\beta)$ be defined by \eqref{Intro_Def_Lambda_+} and \eqref{Intro_Def_Lambda_-}. 
\begin{enumerate}
\item[(i)] If $\beta \in \{a_i\}$, then
\begin{equation*}
\varLambda_E^+(\beta) = \sum_{\stackrel{A(\xi) = 0}{|\xi| \leq \beta}} \frac{1}{K(\xi, \xi)} \ \ \ {\rm and} \ \ \ \varLambda_E^-(\beta) = \sum_{\stackrel{A(\xi) = 0}{|\xi| < \beta}} \frac{1}{K(\xi, \xi)}.
\end{equation*}
\item[(ii)] If $\beta \in \{b_i\}$, then
\begin{equation*}
\varLambda_E^+(\beta) =  \sum_{\stackrel{B(\xi) = 0}{|\xi| \leq \beta}} \frac{1}{K(\xi, \xi)} \ \ \ {\rm and} \ \ \ \varLambda_E^-(\beta) =  \sum_{\stackrel{B(\xi) = 0}{|\xi| < \beta}} \frac{1}{K(\xi, \xi)}.
\end{equation*}
\item[(iii)] If $b_k < \beta < a_{k+1}$, then
\begin{equation*}
\varLambda_E^+(\beta) =  \sum_{\stackrel{A_{\beta}(\xi) = 0}{|\xi| \leq \beta}} \frac{\xi^2 + \gamma_{\beta}^2}{K_{\beta}(\xi, \xi)}\ \ \ {\rm and} \ \ \ \varLambda_E^-(\beta) =  \sum_{\stackrel{A_{\beta}(\xi) = 0}{|\xi| < \beta}} \frac{\xi^2 + \gamma_{\beta}^2}{K_{\beta}(\xi, \xi)}.
\end{equation*}
\item[(iv)] If $a_k < \beta < b_{k}$, then
\begin{equation*}
\varLambda_E^+(\beta) =  \sum_{\stackrel{B_{\beta}(\xi) = 0}{|\xi| \leq \beta}} \frac{\xi^2 + \gamma_{\beta}^2}{K_{\beta}(\xi, \xi)}\ \ \ {\rm and} \ \ \ \varLambda_E^-(\beta) =  \sum_{\stackrel{B_{\beta}(\xi) = 0}{|\xi| < \beta}} \frac{\xi^2 + \gamma_{\beta}^2}{K_{\beta}(\xi, \xi)}.
\end{equation*}
\end{enumerate}
In each of the cases above, there exists a pair of extremal functions \new{ $R_{\beta,E}^\pm$, i.e. functions for which \eqref{char-ineq-E} holds and  the identities $M_E(R_{\beta,E}^\pm) = \Lambda_E^\pm(\beta)$ are valid. In particular, the values $M_E(R_{\beta,E}^\pm)$ are finite.} These extremal functions interpolate the characteristic function $\chi_{[-\beta,\beta]}$ at points $\xi$ given by {\rm (i)} $A(\xi) = 0$; {\rm (ii)} $B(\xi) = 0$; {\rm (iii)} $A_{\beta}(\xi) = 0$; {\rm (iv)} $B_{\beta}(\xi) = 0$, respectively. In the generic cases  {\rm (iii)}  and  {\rm (iv)}  such a pair of extremal functions is unique.
\end{theorem}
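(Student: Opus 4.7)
The plan is to establish Theorem \ref{Intro_thm5_super} by combining two classical ingredients from de Branges' theory: a Gaussian-type quadrature formula (valid by property (P4)) and an explicit Hermite interpolation construction of the candidate extremal functions. The key structural observation is that the generic cases (iii) and (iv) reduce to the degenerate cases (i) and (ii) applied to the auxiliary Hermite-Biehler function $E_{\beta}(z) = E(z)(\gamma_{\beta} - iz)$ from \eqref{Intro_Def_E_beta_2}. Since $(\gamma_\beta - iz)$ has its only zero at $-i\gamma_\beta \in \C^-$, $E_\beta$ is Hermite-Biehler with $\tau(E_\beta) = \tau(E)$, and the definitions of $\gamma_\beta$ in cases (iii) and (iv) are engineered precisely so that $A_\beta(\pm\beta) = 0$ and $B_\beta(\pm\beta) = 0$, respectively. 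The transfer identity $|E(x)|^{-2} = (x^2+\gamma_\beta^2)|E_\beta(x)|^{-2}$ then allows us to rewrite $M_E(R)$ as a weighted integral against $|E_\beta(x)|^{-2}$, converting the $E$-problem into a standard problem for $E_\beta$.

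The first technical tool is the standard de Branges quadrature formula (see \cite{HV} and \cite{B}): for any $F$ entire of exponential type at most $2\tau(E)$ belonging to $L^1(\R, |E(x)|^{-2}\,\dx)$, property (P4) yields
\[
\int_\R F(x)\,|E(x)|^{-2}\,\dx \,=\, \sum_{A(\xi)=0}\frac{F(\xi)}{K(\xi,\xi)},
\]
with an analogous formula summed over zeros of $B$. With this in hand, in case (i) any admissible majorant $R$ of $\chi_{[-\beta,\beta]}$ is nonnegative on $\R$, and the continuity of $R$ combined with $R(x) \geq 1$ for $x \in (-\beta,\beta)$ forces $R(\pm\beta) \geq 1$; hence
\[
M_E(R) \,=\, \sum_{A(\xi)=0} \frac{R(\xi)}{K(\xi,\xi)} \,\geq \sum_{\substack{A(\xi)=0 \\ |\xi|\leq \beta}} \frac{1}{K(\xi,\xi)}.
\]
For the minorant in case (i), continuity similarly forces $R(\pm\beta) \leq 0$ (using $R(x) \leq 0$ for $|x| > \beta$), so the boundary nodes contribute nonpositively and the strict inequality $|\xi| < \beta$ in the theorem emerges. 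Case (ii) is identical using $B$-quadrature, and cases (iii) and (iv) follow by applying the $A_\beta$- or $B_\beta$-quadrature to $F(z) = R(z)(z^2 + \gamma_\beta^2)$, which has exponential type $2\tau(E_\beta)$ and the required integrability, producing the weight $\xi^2 + \gamma_\beta^2$ in the sums.

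To match these bounds, I would construct the extremizers explicitly by Hermite interpolation. In case (i), $R_{\beta,E}^+$ is taken to be the entire function of exponential type at most $2\tau(E)$ that equals $\chi_{[-\beta,\beta]}(\xi)$ at every zero $\xi$ of $A$ and has vanishing derivative at every such $\xi \neq \pm\beta$; its existence follows from a classical interpolation formula involving the squared kernels $K(\xi,z)^2/K(\xi,\xi)$ together with boundary correction terms at $\pm\beta$ in the spirit of the Beurling-Selberg construction \eqref{Intro_BS1}. Evaluating $R_{\beta,E}^+$ at the nodes and applying the quadrature formula gives $M_E(R_{\beta,E}^+) = \sum_{|\xi|\leq\beta} 1/K(\xi,\xi)$, matching the lower bound. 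The main technical obstacle, which I expect to require the most care, is the pointwise majorization $R_{\beta,E}^+(x) \geq \chi_{[-\beta,\beta]}(x)$ for all $x \in \R$: outside $[-\beta,\beta]$ the function has only double real zeros (at the non-boundary nodes) and is nonnegative there, while inside $[-\beta,\beta]$ one must verify $R_{\beta,E}^+(x) - 1 \geq 0$ by factoring the difference as $A(x)^2$ times a nonnegative rational factor, with a careful sign analysis near the endpoints $\pm \beta$. Analogous constructions handle the minorant in case (i) and the extremal pairs in cases (ii)--(iv). Finally, uniqueness in the generic cases (iii) and (iv) follows from the saturation of the quadrature inequality: equality forces $R(\xi) = \chi_{[-\beta,\beta]}(\xi)$ at every zero $\xi$ of $A_\beta$ (respectively $B_\beta$), and together with the derivative-vanishing conditions at the interior nodes this determines $R$ uniquely within the interpolation theory of $\H(E_\beta)$.
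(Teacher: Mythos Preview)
Your overall strategy --- quadrature at the $A_\beta$- or $B_\beta$-nodes combined with Hermite interpolation of $\chi_{[-\beta,\beta]}$ at those nodes --- is exactly the paper's approach. The paper, however, does not carry out the interpolation construction itself: it imports the existence and majorization/minorization inequalities from \cite[Theorem 3.14]{Lit} (for the generic cases) and from \cite[Theorem 15]{HV} (for $\beta \in \{a_k\}\cup\{b_k\}$). Two points in your sketch need attention. First, the quadrature identity you invoke is only established for \emph{nonnegative} $F$ of type $2\tau(E)$, via a Krein factorization $F=UU^*$ with $U\in\H(E)$ and Parseval in the orthogonal basis $\{K(\xi,\cdot)\}$; for the minorant, which is not nonnegative, the paper writes $R^- = R_{\beta,E}^+ - (R_{\beta,E}^+ - R^-)$ as a difference of two nonnegative functions before applying the quadrature.

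There is one genuine gap. Case (iii) with $k=0$, i.e.\ $0<\beta<a_1$, does not fit your interpolation scheme: here $\pm\beta$ are the \emph{only} zeros of $A_\beta$ in $[-\beta,\beta]$ (there is no interior node), so the hypothesis of \cite[Theorem 3.14]{Lit} that $A_\beta^2$ have at least two further zeros in $[0,\beta)$ fails, and your proposed factorization ``$R^+-1 = A^2 \times (\text{nonnegative})$'' has no content. The paper treats this range separately: the optimal minorant is the zero function, and the optimal majorant is $Q_\beta(z)^2$ with $Q_\beta(z) = C_\beta A_\beta(z)/(\beta^2-z^2)$, where the inequality $Q_\beta^2 \ge 1$ on $[-\beta,\beta]$ is proved by a monotonicity argument using the Laguerre--P\'olya structure rather than by interpolation. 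Finally, your uniqueness argument (counting interpolation conditions) differs from the paper's: the paper instead observes that for any extremal pair the difference $R^+-R^-$ solves the two-delta problem \eqref{two-Delta_E}, whose solution is unique when $K(\beta,-\beta)=A(\beta)B(\beta)/\pi\beta \neq 0$, which holds precisely in cases (iii) and (iv).
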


\noindent{\sc Remark.} In the above theorem, interpolating $\chi_{[-\beta,\beta]}$ at the endpoints $\xi = \pm \beta$ means taking the value $1$ for the majorant and the value $0$ for the minorant.

\smallskip

We observe that Theorem \ref{Intro_thm5_super} provides a complete solution to our original extremal problem related to the pair correlation measure. In fact, recall that $E$ defined by \eqref{Intro_Def_E_special} has exponential type $\pi$. Let $R_{\beta}^{\pm}$ be a pair of functions of exponential type at most $2\pi$ that verifies \eqref{Intro_R_beta_pm}. Since $R_{\beta}^+$ is nonnegative on $\R$, a classical result of Krein \cite[p. 154]{A} (alternatively, see \cite[Lemma 14]{CL2}) gives us the representation $R_{\beta}^+(z) = U(z)U^*(z)$, where $U$ is entire of exponential type at most $\pi$. By the identity \eqref{Intro_key_id} we have
\begin{equation*}
M(R_{\beta}^+) = \int_{-\infty}^{\infty} |U(x)|^2 \,\dmu(x) = \int_{-\infty}^{\infty} |U(x)|^2 \, |E(x)|^{-2}\, \dx = M_E(R_{\beta}^+)
\end{equation*}
provided either, and hence both, of the values $M(R_{\beta}^+)$ or $M_E(R_{\beta}^+)$ is finite. To prove the analogous statement for $R_{\beta}^-$, we  \new{write  $R_\beta^-$ as a difference of nonnegative functions (on $\R$), }
\begin{equation*}
\new{R_{\beta}^-(z) =R_{\beta,E}^+(z) - \big(R_{\beta,E}^+(z) - R_{\beta}^-(z)\big) },
\end{equation*}
and conclude that
\begin{equation*}
M(R_{\beta}^-) = M(R_{\beta,E}^+) - M(R_{\beta,E}^+ - R_{\beta}^-) = M_E(R_{\beta,E}^+) - M_E(R_{\beta,E}^+ - R_{\beta}^-) = M_E(R_{\beta}^-)
\end{equation*}
provided either, and hence both, of the values $M(R_{\beta}^-)$ or $M_E(R_{\beta}^-)$ is finite.

\subsubsection{Connection to the two-delta problem} We may consider the two-delta problem in the general de Branges setting, i.e. for a Hermite-Biehler function $E$ satisfying properties {\rm (P1) - (P4)} we define
\begin{equation}\label{two-Delta_E}
\varDelta_E(\beta) = \inf_{R \in \Omega_{\beta,E}} M_E(R)
\end{equation}
where the infimum is taken over the subclass $\Omega_{\beta,E}$ of nonnegative functions $R$ of exponential type at most $2\tau(E)$ such that $R(\pm \beta) \geq1$. Since $\H(E)$ is a reproducing kernel Hilbert space, the solution for this problem is given by Theorem \ref{Intro_HS_Thm5} (the proof is identical, with $K$ being the reproducing kernel of the space $\H(E)$). If $R_{\beta,E}^\pm$ is a pair of extremal functions given by Theorem \ref{Intro_thm5_super}, we show in Section \ref{Sec_de_Branges_spaces} that their difference $R:= R_{\beta,E}^+ - R_{\beta,E}^-$ is an extremal function for the two-delta problem \eqref{two-Delta_E}, and in particular we obtain
\begin{equation*}
\varDelta_E(\beta)  = \varLambda_E^+(\beta) - \varLambda_E^-(\beta). 
\end{equation*} 
From Theorem \ref{Intro_thm1_Gallagher} and Theorem \ref{Intro_HS_Thm5} we arrive at the following result.

\begin{corollary}\label{Intro_Cor6}
Assume RH and \eqref{N star}, and let $K(w,z)$ be defined by \eqref{Intro_rk-rep}. Then 
\begin{equation} \label{U minus L}
\big\{\U(\beta) - \mathcal{L}(\beta)\big\} \leq  \frac{1}{ K(\beta, \beta) + |K(\beta, -\beta)|}=  1 - \left|\frac{\sin2\pi \beta}{2\pi\beta}\right| + O\!\left(\frac{1}{\beta^2 }\right).
\end{equation}
\end{corollary}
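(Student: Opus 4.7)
The plan is to combine Theorem \ref{Intro_thm1_Gallagher} with Theorem \ref{Intro_HS_Thm5} by way of the extremal pair produced by Theorem \ref{Intro_thm5_super}. First I would subtract the outer inequalities in \eqref{Intro_thm1_eq1}: under RH together with \eqref{N star}, for every admissible pair $(R_\beta^+, R_\beta^-)$ satisfying \eqref{Intro_R_beta_pm} one has
\begin{equation*}
\mathcal{U}(\beta) - \mathcal{L}(\beta) \;\leq\; \tfrac{1}{2}\bigl\{M(R_\beta^+) - M(R_\beta^-)\bigr\} \;=\; \tfrac{1}{2}\,M\bigl(R_\beta^+ - R_\beta^-\bigr).
\end{equation*}
To obtain the sharpest possible bound, I would specialize the pair to the extremal functions $R_{\beta,E}^{\pm}$ produced by Theorem \ref{Intro_thm5_super} for the Hermite-Biehler function $E$ defined by \eqref{Intro_Def_E_special}.

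Next I would verify that $R := R_{\beta,E}^+ - R_{\beta,E}^-$ lies in the class $\Omega_\beta$ of \eqref{Intro_Delta_beta}. Nonnegativity is immediate from \eqref{char-ineq-E}, while the condition $R(\pm\beta)\geq 1$ follows from the remark after Theorem \ref{Intro_thm5_super}, which gives $R_{\beta,E}^+(\pm\beta)=1$ and $R_{\beta,E}^-(\pm\beta)=0$. Invoking the identity $M = M_E$ on the relevant class (established in the paragraph following the statement of Theorem \ref{Intro_thm5_super} via Krein's factorization and \eqref{Intro_key_id}) together with the identity $\varDelta_E(\beta) = \varLambda_E^+(\beta) - \varLambda_E^-(\beta)$ recorded in the ``Connection to the two-delta problem'' discussion, I obtain
\begin{equation*}
M(R) \;=\; M_E(R) \;=\; \varLambda_E^+(\beta) - \varLambda_E^-(\beta) \;=\; \varDelta_E(\beta) \;=\; \varDelta(\beta),
\end{equation*}
where the last equality holds because $M=M_E$ makes the infima defining $\varDelta$ and $\varDelta_E$ coincide.

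Putting the two displays together gives $\mathcal{U}(\beta) - \mathcal{L}(\beta) \leq \tfrac12\varDelta(\beta)$, and the explicit value $\tfrac12\varDelta(\beta) = 1/\bigl(K(\beta,\beta)+|K(\beta,-\beta)|\bigr)$ together with its asymptotic $1 - |\sin 2\pi\beta/(2\pi\beta)| + O(\beta^{-2})$ are read off from Theorem \ref{Intro_HS_Thm5}, yielding \eqref{U minus L}. The only point requiring real care is the identification $M(R_{\beta,E}^-) = M_E(R_{\beta,E}^-)$, because $R_{\beta,E}^-$ need not be nonnegative on $\R$ and so Krein's factorization does not apply to it directly; one must use the trick at the end of the extremal problem subsection, writing $R_{\beta,E}^- = R_{\beta,E}^+ - (R_{\beta,E}^+ - R_{\beta,E}^-)$ as a difference of two nonnegative entire functions of exponential type at most $2\pi$ and applying Krein's factorization to each summand separately before integrating. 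Everything else is a direct assembly of the results already proved in the preceding sections.
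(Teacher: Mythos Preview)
Your proposal is correct and follows essentially the same route as the paper: subtract the two inequalities of Theorem \ref{Intro_thm1_Gallagher}, specialize to the extremal pair $R_{\beta,E}^{\pm}$ of Theorem \ref{Intro_thm5_super}, identify $M(R_{\beta,E}^+)-M(R_{\beta,E}^-)$ with $\varDelta_E(\beta)=\varDelta(\beta)$ via the $M=M_E$ identity and the connection to the two-delta problem, and then read off the value and asymptotic from Theorem \ref{Intro_HS_Thm5}. Your explicit remark about handling $R_{\beta,E}^-$ by writing it as a difference of two nonnegative functions before applying Krein's factorization is exactly the device the paper uses.
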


\subsection{Related results} 
Our lower bounds for $N(T,\beta)$ are only nontrivial if the left-hand side of the inequality in \eqref{Intro_thm1_eq1} is positive. It is natural to ask for bounds on the smallest value of $\beta$ for which $N(T,\beta)$ is positive. For instance, in the context of Theorem \ref{Intro_thm2_Gallagher_2}, a straightforward numerical calculation implies that $ \tfrac12M(r_{\beta}^-) >0$ if $\beta \ge 0.8163$ and hence, assuming RH and \eqref{N star}, we see that $N(T,0.8163) \gg N(T)$; this is illustrated in Figure 1. In Section \ref{sg}, using Montgomery's formula in a different manner, we improve this estimate.

\begin{theorem}\label{small gaps}
Assume RH and \eqref{N star}. Then $N(T,0.606894) \gg N(T).$
\end{theorem}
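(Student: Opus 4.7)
The plan is to apply Montgomery's formula \eqref{Mont_formula} with an admissible function $R$ whose role differs from that of the majorants/minorants used in Theorem \ref{Intro_thm1_Gallagher}: rather than bracketing $\chi_{[-\beta,\beta]}$ pointwise, $R$ will be required to have controlled sign on the complement of $[-\beta,\beta]$. This trades the two-sided pointwise inequality \eqref{Intro_R_beta_pm} for a single-sided sign constraint, which should permit a smaller $\beta$ before the method produces a positive lower bound.

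Explicitly, the strategy is to produce a real, even, admissible function $R$ of exponential type at most $2\pi$ satisfying
\begin{equation*}
R(x) \le 0 \text{ for all } |x| \ge \beta, \quad R(0) = \sup_{x \in \R} R(x) > 0, \quad \text{and} \quad M(R) > 0.
\end{equation*}
Using $0 \le w \le 1$ together with the sign condition on $R$, the pair correlation sum admits the upper bound
\begin{equation*}
\sum_{0 < \gamma, \gamma' \le T} R\!\left((\gamma'-\gamma)\tfrac{\log T}{2\pi}\right) w(\gamma'-\gamma) \;\le\; R(0)\big(N^*(T) + 2 N(T, \beta)\big) + O_{\beta}\!\left(\tfrac{N(T)}{(\log T)^2}\right),
\end{equation*}
where the diagonal pairs $\gamma=\gamma'$ give the $R(0)N^*(T)$ piece, the off-diagonal pairs with $0 < \gamma'-\gamma \le 2\pi\beta/\log T$ (and their $\gamma \leftrightarrow \gamma'$ images) yield the $2R(0)N(T,\beta)$ piece after replacing $w$ by $1$ at cost $O_\beta((\log T)^{-2})$, and the remaining pairs contribute non-positively by the sign condition on $R$. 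Applying \eqref{Mont_formula} to the left-hand side, dividing by $N(T)$, invoking \eqref{N star}, and passing to the liminf in $T$ yields
\begin{equation*}
\mathcal{L}(\beta) \;\ge\; \frac{M(R)}{2R(0)} \;>\; 0,
\end{equation*}
which is exactly $N(T,\beta) \gg N(T)$.

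Thus the whole theorem reduces to exhibiting a single admissible $R$ of the above shape at $\beta = 0.606894$. The plan is to search within a finite-dimensional family of admissible functions in which the condition ``$R \le 0$ outside $[-\beta,\beta]$'' is enforced by prescribing double zeros of $R$ at a suitable sequence of real points $\beta = \xi_0 < \xi_1 < \xi_2 < \cdots$ (together with their negatives), so that the sign of $R$ on $|x|>\beta$ can be read off directly from an explicit interpolation or product representation. The remaining free parameters (weights, or the locations of the $\xi_i$) are then tuned so as to maximize $M(R)/R(0)$; the integral $M(R)$ itself is amenable to closed-form evaluation either via Parseval on the Fourier side (where $\widehat{R}$ is supported in $[-1,1]$) or, more intrinsically, through the reproducing-kernel structure of $\mathcal{H} = \mathcal{B}_2(\pi,\mu)$ developed in Section \ref{HS_approach}. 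The main obstacle is not the analytic reduction above, which is essentially standard, but this constrained extremal problem: the displayed constant $0.606894$ is presumably the six-digit truncation of the infimum of those $\beta$ for which the supremum of $M(R)/R(0)$ over admissible $R$ satisfying the sign condition is strictly positive, and one must produce an explicit $R$ attaining $M(R)>0$ at that $\beta$ with rigorous (and numerically verifiable) sign and positivity certificates.
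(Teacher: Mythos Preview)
Your reduction is sound: an admissible $R$ with $R\le 0$ on $|x|\ge\beta$ and $R(0)=\sup R>0$ does give $\mathcal L(\beta)\ge M(R)/(2R(0))$. The gap is that no such $R$ with $M(R)>0$ exists at $\beta=0.606894$. After normalizing $R(0)=1$, your sign hypothesis forces $R\le\chi_{[-\beta,\beta]}$ pointwise, so you are asking for an admissible minorant of $\chi_{[-\beta,\beta]}$ with positive $M$-value; this is exactly the lower half of Theorem~\ref{Intro_thm1_Gallagher}, whose optimum is $\varLambda_E^-(\beta)$ from Theorem~\ref{Intro_thm5_super}. Case~3 of that proof (\S\ref{Sec_de_Branges_spaces}) shows that for every $\beta$ below the first positive zero $a_1$ of $A$ the extremal minorant is the zero function, so $\varLambda_E^-(\beta)=0$ and your method yields nothing. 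The paper records that even the Beurling--Selberg minorant $r_\beta^-$ needs $\beta\ge 0.8163$ before $M(r_\beta^-)>0$; the constant $0.606894$ does not arise from this circle of ideas, and your presumption that it is the threshold for the minorant problem is incorrect.

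The paper's proof abandons the admissibility constraint altogether. It takes $R(x)=G(x/\beta)$ with $G(x)=(\sin\pi x/\pi x)^2/(1-x^2)$, a minorant of $\chi_{[-1,1]}$ whose Fourier transform is nonnegative and supported on $[-1,1]$; hence $\widehat R$ is supported on $[-1/\beta,1/\beta]$, strictly wider than $[-1,1]$ when $\beta<1$. The convolution formula \eqref{convolution} then involves $\int_1^{1/\beta}\widehat G(\beta\alpha)F(\alpha)\,\dalpha$, a range where no asymptotic for $F$ is available. Dropping this integral (using $\widehat G\ge 0$, $F\ge 0$) recovers the GGOS constant $0.6072\ldots$; the further improvement to $0.606894$ comes from a \emph{second} use of Montgomery's formula, packaged as Goldston's integrated lower bound (Lemma~\ref{Goldston ineq})
\[
\int_1^\xi(\xi-\alpha)F(\alpha)\,\dalpha\ \ge\ \tfrac{\xi^2}{2}-\xi+\tfrac13+o(1),
\]
which after two integrations by parts gives a positive lower bound for the piece on $[1+1/\sqrt3,\,1/\beta]$. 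This ``Montgomery's formula twice'' step, flagged in the introduction, is exactly the ingredient your plan lacks; restricting to exponential type $2\pi$ shuts off access to $F(\alpha)$ for $|\alpha|>1$ and cannot reach the stated constant.
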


As stated, this result appears to be the best known result on small gaps coming from Montgomery's formula. Theorem \ref{small gaps} gives a modest improvement of the previous results of Montgomery \cite{M1} and Goldston, Gonek, \"{O}zl\"{u}k and Snyder \cite{GGOS} who, under the same assumptions, had shown that $N(T,0.6695...) \gg N(T)$ and $N(T, 0.6072...) \gg N(T)$, respectively.\footnote{The result in \cite{M1} is stated with $0.68$ in place of $0.6695...$\,. As is pointed out in \cite{GGOS}, it is not difficult to modify Montgomery's argument to derive this sharper estimate. Moreover, it is shown in \cite{GGOS} that a result stronger than Theorem \ref{small gaps} holds assuming \eqref{N star} and the generalized Riemann hypothesis for Dirichlet $L$-functions.} Our proof differs somewhat from the proofs of these previous results since we actually use Montgomery's formula twice, choosing two different test functions.

\smallskip

\begin{figure} \label{figure2}
\includegraphics[scale=.44]{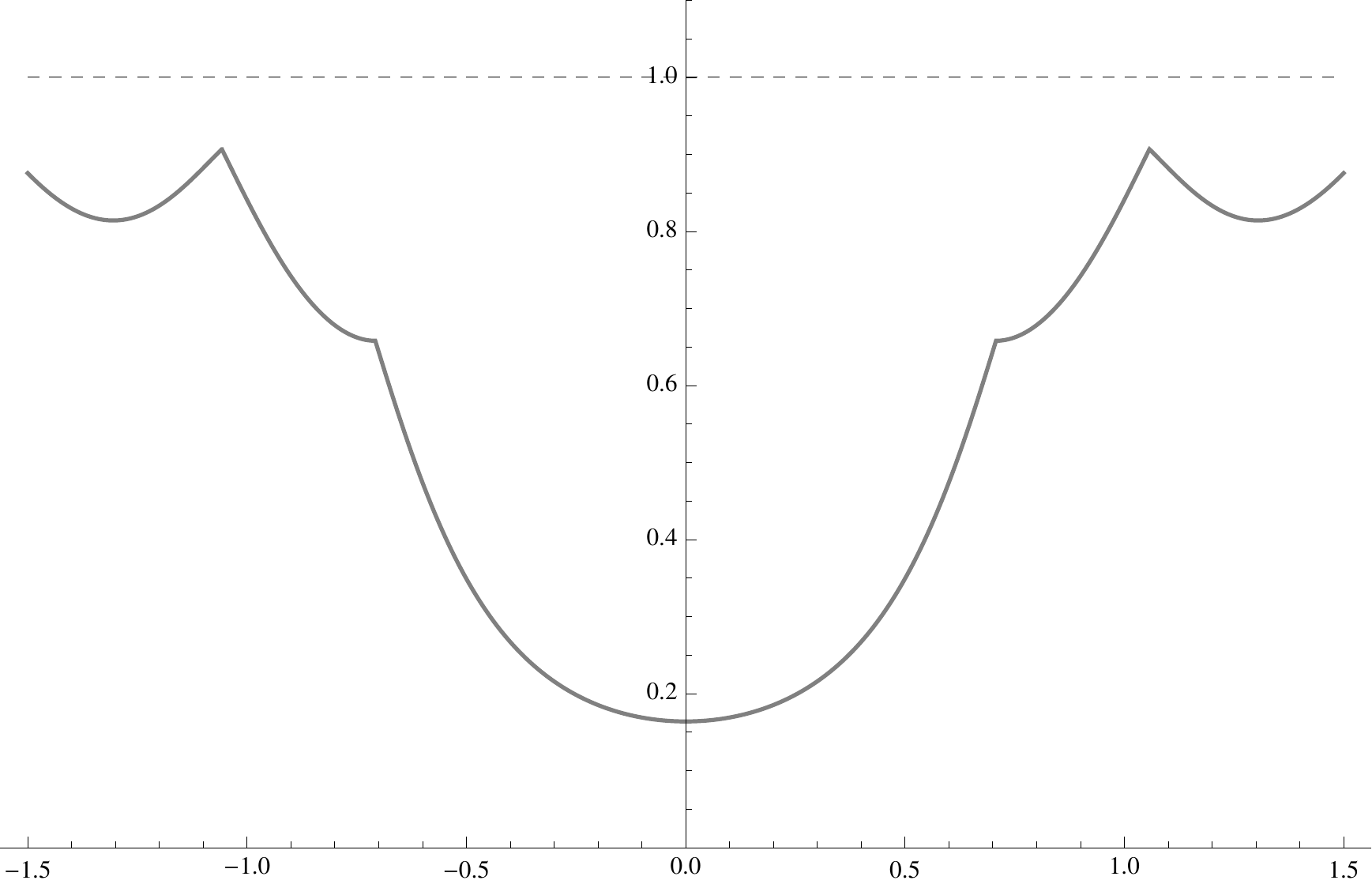} \qquad 
\includegraphics[scale=.44]{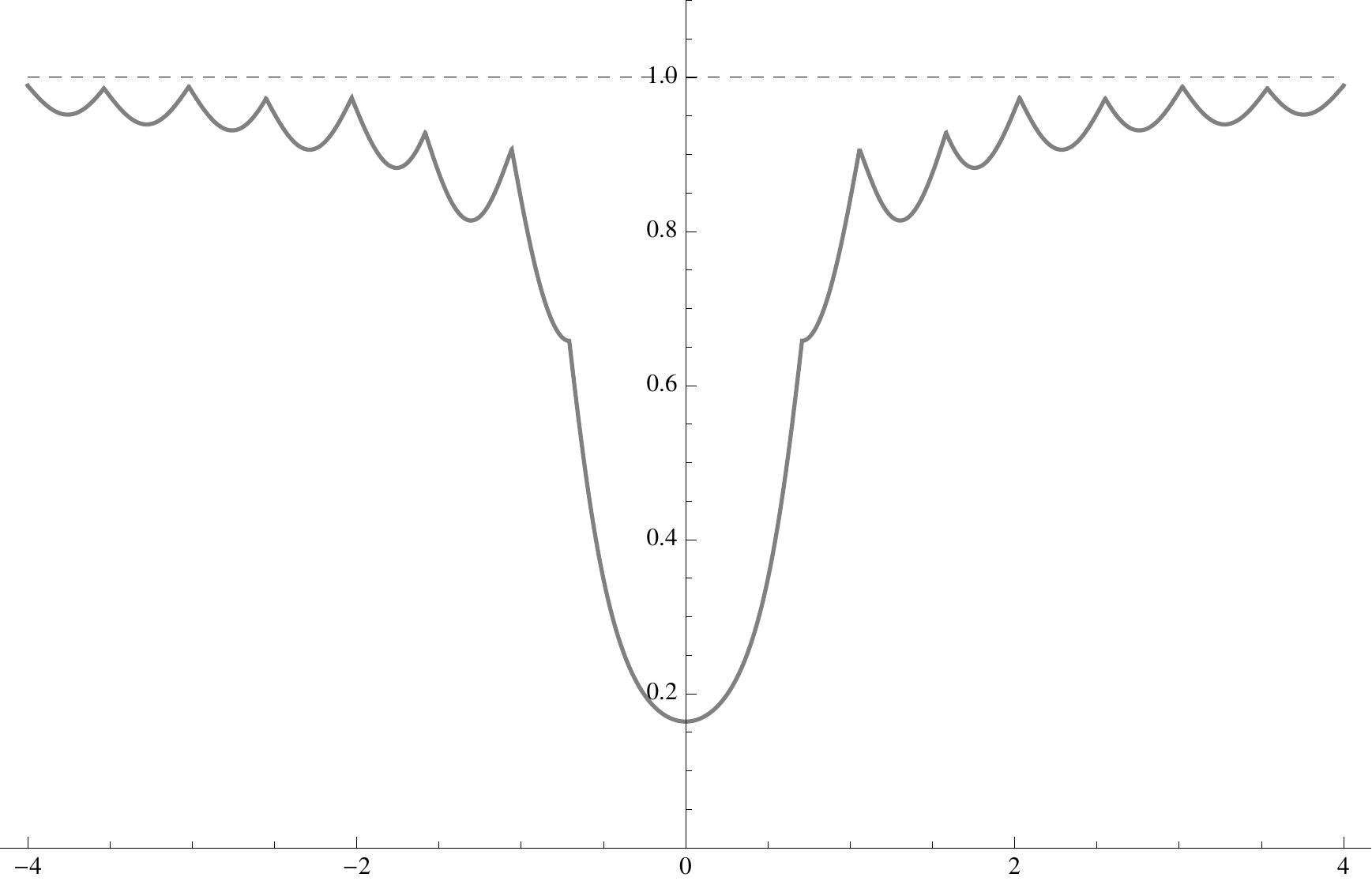} 
\caption{The above images illustrate the upper bound for $\U(\beta) - \mathcal{L}(\beta)$ given in Corollary \ref{Intro_Cor6}. }
\end{figure}

Theorem \ref{small gaps} implies that infinitely often the gap between the imaginary parts of consecutive nontrivial zeros of $\zeta(s)$ is less than the average spacing. Define the quantity
\[
\mu = \liminf_{n \to \infty} \frac{(\gamma_{n+1}\!-\!\gamma_n) \log \gamma_n}{2\pi}.
\]
Since the average size of $\gamma_{n+1}-\gamma_n$ is $2\pi/\log \gamma_n$, we see that trivially $\mu \le 1$. Assuming RH, Theorem \ref{small gaps} implies that $\mu \le 0.606894$. To see why, note that if \eqref{N star} holds then the claimed inequality for $\mu$ follows from Theorem \ref{small gaps} since $\mu \le \beta$ if $N(T,\beta)\gg N(T)$. On the other hand, if \eqref{N star} does not hold, then there are infinitely many multiple zeros of $\zeta(s)$ implying that $\mu=0$. Hence, in either case, we have $\mu \le 0.606894$.

\smallskip

Due to the connection to the class number problem for imaginary quadratic fields \cite{CI,MW}, it is an interesting open problem to prove that $\mu < \frac{1}{2}$. By a different method, also assuming RH, Feng and Wu \cite{FW} have proved that $\mu \le 0.5154$. This improves previous estimates by a number of other authors \cite{BMN,CCG,MO}. It does not appear, however, that any of these results can be applied to prove nontrivial estimates for the function $N(T,\beta)$.

\smallskip

In Section \ref{Sec_Q_analogue}, we prove a result which is an analogue of Theorems \ref{Intro_thm1_Gallagher} and \ref{Intro_thm2_Gallagher_2} for the zeros of primitive Dirichlet $L$-functions in $q$-aspect. This requires the version of Montgomery's formula given in \cite{CLLR}, which was proved using a modification of the asymptotic large sieve of Conrey, Iwaniec and Soundararajan \cite{CIS1}. In this case, the results in \cite{CLLR} allow to use Beurling-Selberg  majorants and minorants of $\chi_{[-\beta, \beta]}(x)$ with Fourier transforms supported in $(-2,2)$. This leads to stronger results which are stated in Theorem \ref{q theorem}.

\section{Bounds via Beurling-Selberg majorants}\label{Sec_BS_majorants}

In this section we prove Theorem \ref{Intro_thm2_Gallagher_2}. Exploiting the fact that we have explicit expressions for the Beurling-Selberg functions $r_{\beta}^{\pm}$ and their Fourier transforms, we also prove a version of Theorem \ref{Intro_thm1_Gallagher} that allows $\beta$ to vary with $T$. 

\begin{theorem}\label{Gallagher}
Assume RH. Then, for any $\beta = \beta(T)>0$ satisfying 
\begin{equation} \label{beta condition}
\beta \, \left( \frac{\log\log T}{\log T} \right)^{\!1/2} \to 0 \quad \text{as } T\to \infty,
\end{equation}
we have
\begin{equation}\label{precise1}
\begin{split}
 \frac12 M(r_{\beta}^{-})  + \frac{1}{2}\left(1\! -\! \frac{N^*(T)}{N(T)}\right)  +o(1)   \, \leq \, \frac{N(T,\beta)}{N(T)} \, &\leq \,    \frac12 M(r_{\beta}^{+}) +  \frac{1}{2}\left(1\! -\! \frac{N^*(T)}{N(T)}\right) +o(1) 
\end{split}
 \end{equation}
 when $T$ is sufficiently large.
\end{theorem}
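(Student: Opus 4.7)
The strategy is to run the same argument that yields Theorem \ref{Intro_thm1_Gallagher} but with quantitative control of all error terms, so that $\beta$ is allowed to vary with $T$. I would carry it out in three steps.

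First, I would establish a quantitative version of Montgomery's formula \eqref{Mont_formula}: under RH, for every admissible $R$,
\begin{equation*}
\frac{1}{N(T)}\sum_{0<\gamma,\gamma' \leq T} R\!\left((\gamma'-\gamma)\tfrac{\log T}{2\pi}\right)\!w(\gamma'-\gamma) = R(0) + M(R) + O\!\left(\|\widehat{R}\|_\infty\, \varepsilon(T)\right),
\end{equation*}
where $\varepsilon(T) \to 0$ independently of $R$. The derivation indicated in \S 2.1 expresses the left-hand side, via Plancherel, as an integral of $\widehat{R}(u)$ on $[-1,1]$ against (a rescaling of) the bilinear zero sum $F(x,T) := \sum_{0<\gamma,\gamma'\leq T} x^{i(\gamma'-\gamma)}w(\gamma'-\gamma)$. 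Tracking the error in Montgomery's asymptotic for $F(x,T)$, uniform in $1\leq x \leq T$ under RH, produces an explicit $\varepsilon(T)$. The threshold $(\log\log T/\log T)^{1/2}$ in \eqref{beta condition} suggests the natural choice $\varepsilon(T) \asymp (\log\log T/\log T)^{1/2}$ paired with a linear $\|\widehat{R}\|_\infty$ dependence.

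Second, I would estimate the Fourier norms of the Beurling-Selberg test functions. From \eqref{Intro_BS1}--\eqref{Intro_BS2} and the definitions \eqref{Intro_def_H_0}--\eqref{Intro_def_H_1} of $H^\pm$, the trivial inequality $\|\widehat{r_\beta^{\pm}}\|_\infty \leq \|r_\beta^{\pm}\|_1 = 2\beta \pm 1$ gives $\|\widehat{r_\beta^{\pm}}\|_\infty \ll \beta$. Applying Step 1 to $R = r_\beta^{\pm}$ and invoking \eqref{beta condition}, the resulting error is $\ll \beta (\log\log T/\log T)^{1/2} = o(1)$.

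Third, I would reestablish the inequalities \eqref{Intro_char_beta_reduction}--\eqref{Intro_char_beta_reduction2} with the $O_\beta(1/(\log T)^2)$ term made uniform in $\beta$. This error comes from the approximation $w(\gamma'-\gamma) = 1 + O(\beta^2/\log^2 T)$ valid in the window $|\gamma'-\gamma| \leq 2\pi\beta/\log T$; combining with Fujii's unconditional bound \eqref{Fujii} that $N(T,\beta)/N(T) \ll \beta$, the total error is $O(\beta^3/\log^2 T) = o(1)$ under \eqref{beta condition}. Combining Steps 1--3 with the sandwich $r_\beta^-(x) \leq \chi_{[-\beta,\beta]}(x) \leq r_\beta^+(x)$ gives
\begin{equation*}
\frac{M(r_\beta^-)}{2} + \frac{r_\beta^-(0)}{2}\!\left(1 - \frac{N^*(T)}{N(T)}\right) + o(1) \leq \frac{N(T,\beta)}{N(T)} \leq \frac{M(r_\beta^+)}{2} + \frac{r_\beta^+(0)}{2}\!\left(1 - \frac{N^*(T)}{N(T)}\right) + o(1).
\end{equation*}
To relax to the stated form \eqref{precise1}, I would use $r_\beta^+(0) \geq 1 \geq r_\beta^-(0)$ (which follow from $r_\beta^-(x) \leq \chi_{[-\beta,\beta]}(x) \leq r_\beta^+(x)$ evaluated at $x=0$) together with the trivial inequality $1 - N^*(T)/N(T) \leq 0$: multiplying a nonpositive number by $r_\beta^+(0) \geq 1$ (resp.\ $r_\beta^-(0) \leq 1$) only decreases (resp.\ increases) it, yielding the simpler (and weaker) bounds with $\tfrac12(1 - N^*(T)/N(T))$ in place of $\tfrac12 r_\beta^\pm(0)(1 - N^*(T)/N(T))$.

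The main obstacle is Step 1: establishing a quantitative Montgomery formula with an explicit, uniform-in-$R$ error depending on $\|\widehat{R}\|_\infty$. This requires revisiting the explicit-formula evaluation of $F(x,T)$ under RH with careful bookkeeping of $T$-uniformity over the range $x \in [1,T]$, upgrading the $o(1)$ in \eqref{Mont_formula} to an explicit decay rate. Once this is in hand, Steps 2 and 3 follow by routine estimation.
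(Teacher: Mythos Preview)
Your three-step plan is essentially the paper's argument. Two remarks are worth making. First, what you flag as the ``main obstacle'' in Step 1 is in fact already available: the uniform asymptotic
\[
F(\alpha) = \big(T^{-2|\alpha|}\log T + |\alpha|\big)\Big(1 + O\big((\log\log T/\log T)^{1/2}\big)\Big), \qquad |\alpha|\le 1,
\]
is a result of Goldston and Montgomery that the paper simply cites, so Step 1 reduces to a citation followed by routine integration against $\widehat{r}_\beta^{\pm}$. Second, an error depending only on $\|\widehat{R}\|_\infty$ is not quite enough: after the substitution $u = t\log T$, extracting the term $\widehat{R}(0)$ from $\int \widehat{R}(u/\log T)\,e^{-2|u|}\,\du$ requires a modulus of continuity for $\widehat{R}$ at the origin. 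The paper handles this by noting that $\widehat{r}_\beta^{\pm}$ is Lipschitz with constant $O((1+\beta)^2)$, which contributes a further error $O((1+\beta)^2/\log T)$, still $o(1)$ under \eqref{beta condition}. With these two adjustments your outline matches the paper's proof line for line, including the final relaxation via $r_\beta^+(0)\ge 1\ge r_\beta^-(0)$ and $N^*(T)\ge N(T)$.
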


The condition on $\beta$ in \eqref{beta condition} arises from the size of the error term in \eqref{F alpha} below, and it may be possible to weaken this condition slightly. Since it is generally believed that the zeros of $\zeta(s)$ are all simple, we expect that $N^*(T) = N(T)$ for all $T>0$ and hence that \eqref{N star} should hold. Assuming RH, Montgomery \cite{M1} has shown that 
\begin{equation}\label{Intro_4/3_bound}
N^*(T) \le \left( \frac{4}{3} +o(1) \right) N(T) 
\end{equation}
as $T\to \infty$. Observing that $N^*(T)\ge N(T)$, and combining \eqref{precise1}, \eqref{Intro_4/3_bound}, and Theorem \ref{Intro_thm2_Gallagher_2}, we deduce the following corollary which does not rely on the additional assumption in \eqref{N star}.


\begin{corollary}\label{Mont}
Assume RH. Then, for any $\beta>0$ satisfying \eqref{beta condition}, we have
\begin{equation*}
\beta - \frac{7}{6} + \frac{1}{2 \pi^2 \beta} +  O\!\left(\frac{1}{\beta^2 }\right) +o(1)  \leq \frac{N(T,\beta)}{N(T)} \leq \beta  + \frac{1}{2 \pi^2 \beta} +  O\!\left(\frac{1}{\beta^2 }\right)  + o(1)
\end{equation*}
when $T$ is sufficiently large. 
\end{corollary}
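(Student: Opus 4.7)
The plan is to combine the three ingredients explicitly named just above the statement: Theorem \ref{Gallagher}, the asymptotic evaluation in Theorem \ref{Intro_thm2_Gallagher_2}, and Montgomery's unconditional bound \eqref{Intro_4/3_bound}. Both assertions will follow by inserting the asymptotic formula for $\frac{1}{2}M(r_{\beta}^{\pm})$ into the sandwich \eqref{precise1} and then controlling the multiplicity factor $\frac{1}{2}(1 - N^*(T)/N(T))$ by the two-sided estimate $1 \leq N^*(T)/N(T) \leq \frac{4}{3} + o(1)$.

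For the upper bound I would simply note that $N^*(T) \geq N(T)$ implies $\frac{1}{2}(1 - N^*(T)/N(T)) \leq 0$, so the right-hand inequality in \eqref{precise1} gives
\[
\frac{N(T,\beta)}{N(T)} \;\leq\; \frac{1}{2}M(r_{\beta}^{+}) + o(1) \;=\; \beta + \frac{1}{2\pi^{2}\beta} + O\!\left(\frac{1}{\beta^{2}}\right) + o(1),
\]
where the last equality is the asymptotic part of Theorem \ref{Intro_thm2_Gallagher_2} (with the $+$ sign).

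For the lower bound I would use the conditional upper bound $N^*(T) \leq (\tfrac{4}{3} + o(1))\,N(T)$ from \eqref{Intro_4/3_bound}, which yields
\[
\frac{1}{2}\!\left(1 - \frac{N^*(T)}{N(T)}\right) \;\geq\; -\frac{1}{6} - o(1).
\]
Plugging this into the left-hand inequality of \eqref{precise1} and using Theorem \ref{Intro_thm2_Gallagher_2} with the $-$ sign,
\[
\frac{N(T,\beta)}{N(T)} \;\geq\; \frac{1}{2}M(r_{\beta}^{-}) - \frac{1}{6} + o(1) \;=\; \Bigl(\beta - 1 + \frac{1}{2\pi^{2}\beta} + O(\beta^{-2})\Bigr) - \frac{1}{6} + o(1),
\]
which collapses to the claimed $\beta - \frac{7}{6} + \frac{1}{2\pi^{2}\beta} + O(\beta^{-2}) + o(1)$.

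There is essentially no obstacle: the corollary is a bookkeeping exercise once one has Theorem \ref{Gallagher}, the asymptotic expansion of $M(r_{\beta}^{\pm})$, and the $4/3$-bound. The only point worth being careful about is that the hypothesis \eqref{beta condition} is inherited directly from Theorem \ref{Gallagher}, and that no additional assumption such as \eqref{N star} is made—this is precisely why the constant $-7/6$ (rather than $-1$) appears in the lower bound, tracking the possible contribution of multiple zeros through Montgomery's $4/3$-estimate.
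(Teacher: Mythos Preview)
Your proof is correct and follows precisely the argument the paper sketches: combine the sandwich \eqref{precise1} from Theorem \ref{Gallagher} with the asymptotic in Theorem \ref{Intro_thm2_Gallagher_2}, using $N^*(T)\ge N(T)$ for the upper bound and Montgomery's estimate \eqref{Intro_4/3_bound} for the lower bound. There is nothing to add.
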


\noindent{\sc Remark.} The lower bound in Corollary \ref{Mont} can be sharpened slightly using improved estimates for $N^*(T)$ obtained by Montgomery and Taylor  \cite{M2} (see the remark after Corollary \ref{cor_Mont_Taylor} below) or by Cheer and Goldston \cite{CG} assuming RH, or by Goldston, Gonek, \"{O}zl\"{u}k and Snyder \cite{GGOS} assuming the  generalized Riemann hypothesis for Dirichlet $L$-functions. 

\smallskip

Our original proof of Corollary \ref{Mont} was a bit different and did not rely directly on Montgomery's formula. We briefly indicate the main ideas. Writing $N(T,\beta)$ as a double sum and using a more precise formula for $N(T)$, we can show that
\begin{equation}\label{alt_proof}
N(T,\beta) \, = \, N(T) \left\{ \beta \mp \frac{1}{2}\frac{N^*(T)}{N(T)} +o(1) \right\} \ \pm \sum_{0<\gamma \le T} \!\! S\Big( \gamma \!\pm\! \frac{2\pi\beta}{\log T} \Big)
\end{equation}
for $\beta=o(\log T)$. Here, if $t$ does not correspond to an ordinate of a zero of $\zeta(s)$, we define $S(t) = \frac{1}{\pi}\arg \zeta(\frac{1}{2}+it)$ and otherwise we let
\[
S(t) = \frac{1}{2} \lim_{\varepsilon \to 0} \big\{ S(t\!+\!\varepsilon)+S(t\!-\!\varepsilon) \big\}.
\]
Using ideas from \cite{CCM}, we can replace the sum involving $S(t)$ on the right-hand side of \eqref{alt_proof} with a double sum over zeros involving the odd function $f(x) = \arctan(1/x)-x/(1+x^2)$.  In \cite{CCM}, we construct majorants and minorants of exponential type $2\pi$ for $f(x)$ using the framework for the solution of the Beurling-Selberg extremal problem given in \cite{CL} for the truncated (and odd) Gaussian. This allows us to prove the upper and lower bounds for $N(T,\beta)$ in Corollary \ref{Mont} by using these majorants and minorants in the sum on the right-hand side of \eqref{alt_proof}, twice applying the explicit formula, and then carefully estimating the resulting sums and integrals. The fact that our original proof relied on two applications of the explicit formula suggests using Montgomery's formula instead, and we have chosen only to present this simpler proof here.

\subsection{Montgomery's function $F(\alpha)$} In order to study the distribution of the differences of pairs of zeros of $\zeta(s)$, Montgomery \cite{M1} introduced the function
\begin{equation}\label{Mont_function_sec2}
F(\alpha) := F(\alpha,T) = \frac{2\pi}{T\log T}   \sum_{0< \gamma,\gamma' \le T } T^{i\alpha(\gamma'-\gamma)}\,w(\gamma'\!-\!\gamma)\,,
\end{equation}
where $\alpha$ is real, $T\ge 2$, and $w(u)=4/(4+u^2)$. Note that $F(\alpha)$ is real and that $F(\alpha)=F(-\alpha)$. Moreover, 
since
\[
 \sum_{0< \gamma,\gamma' \le T } T^{i\alpha(\gamma'-\gamma)}\,w(\gamma'\!-\!\gamma) = 2\pi \int_{-\infty}^{\infty} e^{-4\pi |u|} \Bigg| \sum_{0<\gamma\le T} T^{i\alpha\gamma} e^{2\pi i \gamma u} \Bigg|^2 \du\,,
\]
we see that $F(\alpha) \ge 0$ for $\alpha \in \mathbb{R}.$ Multiplying $F(\alpha)$ by a function $\widehat{R} \in L^1(\mathbb{R})$ and integrating, we derive the convolution formula
\begin{equation} \label{convolution}
 \sum_{0< \gamma,\gamma' \le T }  R\!\left( (\gamma'\!-\!\gamma)\frac{\log T}{2\pi} \right) w(\gamma'\!-\!\gamma) = \frac{T\log T}{2\pi} \int_{-\infty}^{\infty} \widehat{R}(\alpha) \, F(\alpha) \, \dalpha.
 \end{equation}
Assuming RH, refining the original work of Montgomery \cite{M1}, Goldston and Montgomery \cite[Lemma 8]{GM} proved that
\begin{equation} \label{F alpha}
F(\alpha) = \left( T^{-2|\alpha|} \log T + |\alpha| \right)\left(1 + O\left(\sqrt{\tfrac{\log\log T}{\log T}} \right) \right), \quad \text{as } T \to \infty,
\end{equation}
uniformly for $0\le |\alpha| \le 1$. 
Using this asymptotic formula for $F(\alpha)$ in the integral on the right-hand side of \eqref{convolution} allows for the evaluation of a large class of double sums over differences of zeros of $\zeta(s)$. 

\smallskip

From \eqref{convolution}, \eqref{F alpha}, and Plancherel's theorem, one can deduce Montgomery's formula as stated in \eqref{Mont_formula}. Furthermore, Montgomery \cite{M1} conjectured that $F(\alpha) = 1 + o(1)$ for $|\alpha| >1$, uniformly for $\alpha$ in bounded intervals. Along with \eqref{F alpha}, this conjecture completely determines the behavior of $F(\alpha)$, and suggests that Montgomery's formula in \eqref{Mont_formula} continues to hold for any function $R(x)$ whose Fourier transform $\widehat{R}(\alpha)$ is compactly supported. Choosing $R(x)$ to approximate the characteristic function of an interval led Montgomery to make the pair correlation conjecture for the zeros of $\zeta(s)$ in \eqref{PCC}.


\subsection{The Fourier transforms of $r_{\beta}^{\pm}$} Recall the entire functions $H_0(z)$ and $H_1(z)$ defined in \eqref{Intro_def_H_0} and \eqref{Intro_def_H_1}. The Fourier transform of $H_1$ is given by 
\begin{equation}\label{FTK}
\widehat{H_1}(t) = \max\big(1 \!-\! |t|, 0\big)
\end{equation}
for $t \in \mathbb{R}$, while the Fourier transform of the integrable function $W(x) = H_0(x) - \sgn(x)$ is given by \cite[Theorems 6 and 7]{V}
\begin{equation}\label{FTE}
\widehat{W}(t) = \left\{
\begin{array}{ll}
0, & {\rm if} \ \ t=0,\\
(\pi i t)^{-1} \big\{(1 - |t|)(\pi t \cot \pi t - 1)\big\}, & { \rm if} \ \ 0 < |t| <1,\\
-(\pi i t)^{-1},  &  {\rm if} \ \ |t| \geq 1.
\end{array}
\right.
\end{equation}
We can now compute the Fourier transforms of the functions $r_{\beta}^{\pm}$ defined in \eqref{Intro_BS1} and \eqref{Intro_BS2}, which, as we already noted, are continuous functions supported in $[-1,1]$. 
\begin{lemma}
For $-1 \leq t \leq 1$ we have
\begin{align}\label{FTr}
\begin{split}
\widehat{r}_{\beta}^{\pm}(t) & =  i \,\sin 2\pi \beta t\,\, \widehat{W}(t) +  \frac{\sin 2 \pi \beta t}{\pi t} \pm (1 - |t|) \cos 2\pi \beta t.
\end{split}
\end{align}
\end{lemma}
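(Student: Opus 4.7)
The plan is to decompose $H^{\pm}$ into pieces whose Fourier transforms are classical, avoiding the non-integrability of the individual summand $H_0$. Since $W(x) := H_0(x) - \sgn(x)$ lies in $L^1(\R)$ by the decay statement preceding \eqref{FTE}, I would first write
\begin{equation*}
H^{\pm}(x) \,=\, W(x) + \sgn(x) \pm H_1(x),
\end{equation*}
and then substitute this into \eqref{Intro_BS1}, \eqref{Intro_BS2}. Using the elementary identity $\tfrac{1}{2}\{\sgn(x+\beta) + \sgn(-x+\beta)\} = \chi_{[-\beta,\beta]}(x)$, one obtains
\begin{equation*}
r_{\beta}^{\pm}(x) = \tfrac{1}{2}\{W(x+\beta) + W(-x+\beta)\} + \chi_{[-\beta,\beta]}(x) \pm \tfrac{1}{2}\{H_1(x+\beta) + H_1(-x+\beta)\}.
\end{equation*}

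Next I would compute the Fourier transform termwise. Every summand is now integrable, so the standard shift/reflection rules $(f(\cdot+\beta))^{\wedge}(t) = e^{2\pi i \beta t}\widehat{f}(t)$ and $(f(-\cdot+\beta))^{\wedge}(t) = e^{-2\pi i \beta t}\widehat{f}(-t)$ apply without subtlety. I would then exploit parity: a direct check on \eqref{Intro_def_H_0} shows $H_0$ is odd, hence so is $W$, while $H_1$ is manifestly even. Consequently $\widehat{W}(-t) = -\widehat{W}(t)$ and $\widehat{H_1}(-t) = \widehat{H_1}(t)$, so the two symmetrizations collapse to
\begin{align*}
\big(\tfrac{1}{2}\{W(\cdot+\beta) + W(-\cdot+\beta)\}\big)^{\wedge}(t) &= i\sin(2\pi\beta t)\,\widehat{W}(t), \\
\big(\tfrac{1}{2}\{H_1(\cdot+\beta) + H_1(-\cdot+\beta)\}\big)^{\wedge}(t) &= \cos(2\pi\beta t)\,\widehat{H_1}(t).
\end{align*}

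Finally I would assemble the three contributions, using $\widehat{\chi}_{[-\beta,\beta]}(t) = \sin(2\pi\beta t)/(\pi t)$ and the explicit formula $\widehat{H_1}(t) = 1 - |t|$ on $[-1,1]$ from \eqref{FTK}; this gives \eqref{FTr} directly. The main obstacle is really only the bookkeeping issue that $H_0 \notin L^1(\R)$, which would force one to invoke distributional Fourier transforms and worry about the principal value at $t = 0$; the decomposition via $W$ and $\sgn$ in the first step is designed precisely to sidestep this, routing the singular $1/(i\pi t)$ contribution through the transform of $\chi_{[-\beta,\beta]}$ where it combines with $\sin(2\pi\beta t)$ to yield a bounded function.
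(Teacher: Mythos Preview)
Your proof is correct and follows essentially the same approach as the paper: the paper's proof consists precisely of writing down the decomposition
\[
r_{\beta}^{\pm}(x) = \tfrac12\big\{(W(x+\beta) \pm H_1(x+\beta)) + (W(-x+\beta) \pm H_1(-x+\beta))\big\} + \chi_{[-\beta,\beta]}(x)
\]
and then invoking \eqref{FTK} and \eqref{FTE}. You have simply filled in the parity and shift arguments that the paper leaves implicit.
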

\begin{proof}
Note that
\begin{equation*}
r_{\beta}^{\pm}(x) = \frac12 \big\{ (W(x+\beta) \pm H_1(x + \beta)) + (W(-x+\beta) \pm H_1(-x + \beta))\big\} + \chi_{[-\beta, \beta]}(x).
\end{equation*}
The result now follows from \eqref{FTK} and \eqref{FTE}. 
\end{proof}

Observe from \eqref{FTr} that $\widehat{r}_{\beta}^{\pm}$ are Lipschitz functions, each with Lipschitz constant $C = O\big((1 + \beta)^2\big)$.

\subsection{Proof of Theorem \ref{Gallagher}} \label{amf} 
For any admissible function $R$, Plancherel's theorem implies that
\begin{equation} \label{Plancherel}
M(R) =  \widehat{R}(0) - \int_{-1}^1 \widehat{R}(t) \, \big(1\! -\! |t|\big)\,\dt.
\end{equation}
For simplicity, let $r_{\beta} = r_{\beta}^{\pm}$ denote either of our Beurling-Selberg functions. Then, by \eqref{N}, \eqref{convolution}, \eqref{F alpha}, \eqref{Plancherel}, and another application of Plancherel's theorem, we have
\begin{align}\label{MontFor}
\begin{split}
\frac{1}{N(T)} \sum_{0< \gamma, \gamma' \leq T} & r_{\beta} \left((\gamma' \!-\! \gamma)\tfrac{\log T}{2\pi}\right)\,w(\gamma' \!-\! \gamma) 
\\
& = \int_{-1}^{1} \widehat{r}_{\beta}(t) \left( T^{-2|t|}\, \log T + |t|\right)\dt + O\!\left( (1\!+\!\beta) \sqrt{\tfrac{\log\log T}{\log T}} \right) 
\\ 
& =  \int_{-\infty}^{\infty} \widehat{r}_{\beta}\left( \frac{u}{\log T}\right) e^{-2|u|} \,\du + \int_{-1}^{1} \widehat{r}_{\beta}(t) \, |t|\,  \dt + o(1) 
\\
&=   \widehat{r}_{\beta}(0) +  \int_{-1}^{1} \widehat{r}_{\beta}(t) \,  \dt -  \int_{-1}^{1} \widehat{r}_{\beta}(t) \, \big(1\!-\!|t|\big) \,  \dt + o(1) 
\\
&= r_{\beta}(0) +M(r_\beta) + o(1).
\end{split}
\end{align}
Here we have used the fact that 
$$|\widehat{r}_{\beta}(t)| = O(1 + \beta)$$
uniformly for all $t \in \R$, together with the assumption that $\beta$ satisfies \eqref{beta condition}, to establish the error term of $o(1)$ in \eqref{MontFor}. This error term relies, in part, on the bound (here using that $\widehat{r}_{\beta}$ has Lipschitz constant $C = O(1 + \beta)^2$), 
\begin{align*}
&\left| \int_{-\infty}^{\infty} \left\{ \widehat{r}_{\beta}\left( \frac{u}{\log T}\right) -  \widehat{r}_{\beta}(0)\right\} e^{-2|u|} \,\du \right| \leq \int_{-\infty}^{\infty} C \frac{|u|}{\log T} e^{-2|u|} \,\du = O\!\left( \frac{(1\!+\!\beta)^2}{\log T} \right)= o(1).
\end{align*}
For the majorant $r_{\beta}^{+}$, noting that $1-\frac{u^2}{4} \le w(u) \le 1$, we have
\begin{align}\label{maj11}
\begin{split}
\sum_{0< \gamma, \gamma' \leq T}  & r_{\beta}^{+} \left((\gamma' \!-\! \gamma)\tfrac{\log T}{2\pi}\right)\,w(\gamma' \!-\! \gamma) 
\\
& \geq \, r_{\beta}^{+}(0) N^*(T) \, + \!\! \sum_{\substack{0< \gamma, \gamma' \leq T\\ \gamma\neq \gamma'}} \!\!\! \chi_{[-\beta,\beta]}\! \left((\gamma' \!-\! \gamma)\tfrac{\log T}{2\pi}\right) w(\gamma' \!-\! \gamma)
\\
&= \, r_{\beta}^{+}(0) N^*(T) \, +\,  \left\{ 2 + O\!\left(\frac{(1\!+\!\beta)^2}{\log^2 T} \right) \right\} \, N(T,\beta) 
\\
&= \, r_{\beta}^{+}(0) N^*(T) \, + \, 2 N(T,\beta)  \, + \, o(T\log T),
\end{split}
\end{align}
where we have used \eqref{Fujii} and the assumption on $\beta$ in \eqref{beta condition} to estimate the error term. Using the inequalities $N^*(T) \ge N(T)$ and $r_\beta^+(0) \ge 1$, we conclude from \eqref{N}, \eqref{MontFor} and \eqref{maj11} that
\begin{equation}\label{conclusion+}
\frac{N(T,\beta)}{N(T)} \, \leq \,   \frac{1}{2} \left\{ M(r_{\beta}^{+}) + r_\beta^+(0) \left( 1 \!-\! \frac{N^*(T)}{N(T)} \right) \right\} +o(1) \, \leq \, \frac{1}{2} \left\{ M(r_{\beta}^{+}) + \left( 1 \!-\! \frac{N^*(T)}{N(T)} \right) \right\} +o(1).
\end{equation}
Similarly, for the minorant $r_{\beta}^{-}$, we obtain 
 \begin{align}\label{min11}
\begin{split}
\sum_{0< \gamma, \gamma' \leq T}  & r_{\beta}^{-} \left((\gamma' \!-\! \gamma)\tfrac{\log T}{2\pi}\right)\,w(\gamma' \!-\! \gamma) \leq  r_{\beta}^{-}(0) N^*(T) \, + \, 2 N(T,\beta)  \, + \, o(T\log T)\,,
\end{split}
\end{align}
for $\beta$ satisfying \eqref{beta condition}. In this case, since $r_\beta^-(0)\le 1$, we conclude from \eqref{N}, \eqref{MontFor} and \eqref{min11} that
\begin{equation*}
\frac{N(T,\beta)}{N(T)} \, \geq \,   \frac{1}{2} \left\{ M(r_{\beta}^{-}) + r_\beta^-(0) \left( 1 \!-\! \frac{N^*(T)}{N(T)} \right) \right\} +o(1) \, \geq \, \frac{1}{2} \left\{ M(r_{\beta}^{-}) + \left( 1 \!-\! \frac{N^*(T)}{N(T)} \right) \right\} +o(1).
\end{equation*}
This concludes the proof of Theorem \ref{Gallagher}.

\subsection{Proof of Theorem \ref{Intro_thm2_Gallagher_2}} \label{sec:evaluateMr} 

\subsubsection{Evaluation of $M(r_{\beta}^{\pm})$} We now calculate a slightly more general version of the quantity $M(r_{\beta}^{\pm})$, and specialize to the case of Theorem  \ref{Intro_thm2_Gallagher_2} at the end of this subsection. In particular, we assume the validity of Montgomery's formula in \eqref{Mont_formula} for any integrable function $R$ with Fourier transform supported in $[-\Delta,\Delta]$ with $\Delta \geq 1$ (this stronger version is used later in the proof of Theorem \ref{q theorem}).  The functions 
\begin{equation*}
s_{\Delta, \beta}^{\pm}(x) = r_{\Delta \beta}^{\pm}(\Delta x)
\end{equation*}
are a majorant and a minorant of the characteristic function of the interval $[-\beta,\beta]$ of exponential type $2\pi \Delta$, and hence with Fourier transform supported in $[-\Delta,\Delta]$. We evaluate the quantity
\begin{equation} \label{MDelta}
\frac{1}{2} M\big(s_{\Delta, \beta}^{\pm}\big) = \frac{1}{2} \widehat{s}_{\Delta, \beta}^{\pm}(0) -\frac{1}{2} \int_{-1}^{1}  \widehat{s}_{\Delta, \beta}^{\pm}(t)  (1- |t|)\,\dt,
\end{equation}
and deduce Theorem \ref{Intro_thm2_Gallagher_2} from the case $\Delta=1$.

\smallskip

First observe that
\begin{align*}
\widehat{s}_{\Delta, \beta}^{\pm}(t) & = \frac{1}{\Delta} \, \widehat{r} _{\Delta \beta}^{\pm}\left(\frac{t}{\Delta}\right)\\
& =    \frac{i}{\Delta}\,\sin 2\pi \beta t\, \widehat{W}\left(\frac{t}{\Delta}\right) +  \frac{\sin 2 \pi \beta t}{\pi t} \pm \frac{(\Delta - |t|)}{\Delta^2} \cos 2\pi \beta t\\
& =    \frac{\sin 2\pi \beta t}{\pi t} \left( 1 - \frac{|t|}{\Delta}\right)\left( \frac{\pi t}{\Delta} \cot \frac{\pi t }{\Delta} - 1\right) + \frac{\sin 2 \pi \beta t}{\pi t} \pm \frac{(\Delta - |t|)}{\Delta^2} \cos 2\pi \beta t\\
& = \frac{1}{\Delta^2} \big(\Delta - |t|\big) \sin 2 \pi \beta t \,\cot \frac{\pi t }{\Delta} + \frac{1}{\Delta} \frac{|t| \sin 2\pi \beta t }{\pi t} \pm \frac{(\Delta - |t|)}{\Delta^2} \cos 2\pi \beta t,
\end{align*}
and note that 
\begin{equation} \label{S0}
\frac{1}{2} \widehat{s}_{\Delta, \beta}^{\pm}(0)  = \beta \pm \frac{1}{2 \Delta}.
\end{equation}
Since $\widehat{s}_{\Delta, \beta}^{\pm}(t) $ is an even function, we have
\begin{align*}
\frac{1}{2} \int_{-1}^{1}   \widehat{s}_{\Delta, \beta}^{\pm}(t)  (1- |t|)\,\dt  &=  \frac{1}{\Delta^2} \int_0^1 (\Delta - t)(1-t)\sin 2 \pi \beta t \, \,\cot \frac{\pi t }{\Delta}\, \dt \\
&  \ \ \ \ \ \ \ \ \ +  \frac{1}{\Delta} \int_0^1 \frac{\sin 2\pi \beta t }{\pi} \, (1-t)\,\dt \pm \frac{1}{\Delta^2} \int_0^1 (\Delta - t)\,(1-t)\,\cos 2 \pi \beta t \, \dt\\
& := A + B \pm C,
\end{align*}
say. Integrating by parts, we find that
\begin{align} 
B  &= \frac{1}{\Delta}\left\{ \frac{1}{2 \pi^2 \beta} - \frac{\sin 2\pi \beta}{4\pi^3 \beta^2}\right\} \label{B}
\end{align}
and
\begin{align} 
C &=  \frac{1}{\Delta^2}\left\{ - \frac{(\Delta - 1)\cos 2\pi \beta}{(2 \pi \beta)^2} + \frac{(\Delta + 1)}{(2 \pi \beta)^2}  - \frac{\sin 2 \pi \beta}{4 \pi^3 \beta^3}\right\}.\label{C}
\end{align}
In order to evaluate $A$, we make use of the identity
\begin{equation*}\label{cot_identity}
i \sum_{n= -N}^N \sgn(n)\, e^{-2\pi i nt} = \cot \pi t - \left( \frac{\cos \pi (2N+1) t}{\sin \pi t}\right),
\end{equation*}
which implies that
\begin{align*}
A & =  \frac{1}{\Delta^2} \int_0^1 (\Delta - t) (1-t) \sin 2 \pi \beta t \,\left\{ i \sum_{n= -N}^N \sgn(n)\, e^{-2\pi i n\frac{t}{\Delta}}\right\} \,\dt  \\
&  \ \ \ \ \ \ \ \ \ \ \ \ \ \ \ +\frac{1}{\Delta^2} \int_0^1 (\Delta - t) (1-t) \sin 2 \pi \beta t \, \left( \frac{\cos \pi (2N+1) \frac{t}{\Delta}}{\sin \pi \frac{t}{\Delta}}\right) \,\dt\\
& := A_N + D_N,
\end{align*}
say. The Riemann-Lebesgue lemma implies that $\displaystyle{\lim_{N\to \infty} D_N = 0}$, and thus it remains to evaluate $A_N$. Interchanging summation and integration, we arrive at
\begin{align*}
A_N &= \frac{1}{\Delta^2}\sum_{n= -N}^N \sgn(n)  \int_{0}^1   \left(\frac{e^{2\pi i \beta t} - e^{- 2\pi i \beta t}}{2}\right) e^{-2\pi i n \frac{t}{\Delta}}\, (\Delta - t)(1 - t)  \,\dt  \\
& = \frac{1}{4 \pi^2} \sum_{n= -N}^N  \sgn(n)  \left\{ - \frac{(\Delta - 1)\cos 2\pi (\beta -\tfrac{n}{\Delta}) }{(\Delta \beta -n)^2} + \frac{(\Delta + 1)}{(\Delta \beta -n)^2}  - \frac{\sin 2 \pi (\beta -\tfrac{n}{\Delta}) }{\frac{\pi}{\Delta} (\Delta \beta -n)^3}\right\}.
\end{align*}
Therefore, letting $N\to \infty$, the above estimates imply that
\begin{equation}\label{FinalA}
A =  \frac{1}{4 \pi^2} \sum_{n= -\infty}^\infty  \sgn(n)  \left\{ - \frac{(\Delta - 1)\cos 2\pi (\beta -\tfrac{n}{\Delta}) }{(\Delta \beta -n)^2} + \frac{(\Delta + 1)}{(\Delta \beta -n)^2}  - \frac{\sin 2 \pi (\beta -\tfrac{n}{\Delta}) }{\frac{\pi}{\Delta} (\Delta \beta -n)^3}\right\}. 
\end{equation}

\smallskip

Combining the contributions from $A$ and $C$, we define the continuous functions $V^{\pm}: (0,\infty) \to \R$ by
\begin{equation} \label{VDelta} 
V_{\Delta}^{\pm}(\beta) =  \frac{1}{4 \pi^2} \sum_{n= -\infty}^\infty  \frac{\sgn(n^{\pm})}{(\Delta \beta -n)^2}  \left\{ - (\Delta - 1)\cos 2\pi (\beta -\tfrac{n}{\Delta}) + (\Delta + 1) - \frac{\sin 2 \pi (\beta-\tfrac{n}{\Delta}) }{\frac{\pi}{\Delta} (\Delta \beta -n)}\right\},
\end{equation}
where $\sgn(0^{\pm}) = \pm1$. Then \eqref{MDelta}, \eqref{S0}, \eqref{B}, \eqref{C}, \eqref{FinalA} and \eqref{VDelta} imply that
\begin{equation}\label{final_exp_s}
\frac{1}{2} M\big(s_{\Delta, \beta}^{\pm}\big)  =  \left(\beta \pm \frac{1}{2 \Delta}\right) - \frac{1}{\Delta}\left\{ \frac{1}{2 \pi^2 \beta} - \frac{\sin 2\pi \beta}{4\pi^3 \beta^2}\right\} - V_{\Delta}^{\pm}(\beta).
\end{equation}
Specializing to the case $\Delta = 1,$ we obtain
$$  \frac{1}{2} M\big(r_{\beta}^{\pm}\big)  =  \left(\beta \pm \frac{1}{2}\right) - \left\{ \frac{1}{2 \pi^2 \beta} - \frac{\sin 2\pi \beta}{4\pi^3 \beta^2}\right\} - V_{1}^{\pm}(\beta),$$
which is the explicit expression in Theorem \ref{Intro_thm2_Gallagher_2}.

\subsubsection{Asymptotic evaluation}  \label{sec:deduceThm1from2}  By \eqref{VDelta} we have
\begin{align} \label{eqn:asympVhaveG}
\begin{split}
V_{\Delta}^{\pm}(\beta) &= \frac{1}{4 \pi^2} \sum_{n= -\infty}^\infty  \frac{1}{(\Delta \beta -n)^2}  \left\{ - (\Delta - 1)\cos 2\pi (\beta -\tfrac{n}{\Delta}) + (\Delta + 1) - \frac{\sin 2 \pi (\beta-\tfrac{n}{\Delta}) }{\frac{\pi}{\Delta} (\Delta \beta -n)}\right\} \\
& \ \ \ \ \ \ \ \ \ \ \ - \frac{2}{4 \pi^2} \sum_{n<0 ({\rm or}\, \leq 0)}   \frac{1}{(\Delta \beta -n)^2}  \left\{ - (\Delta - 1)\cos 2\pi (\beta -\tfrac{n}{\Delta}) + (\Delta + 1) - \frac{\sin 2 \pi (\beta-\tfrac{n}{\Delta}) }{\frac{\pi}{\Delta} (\Delta \beta -n)}\right\} \\
&= \frac{1}{4 \pi^2} \sum_{n= -\infty}^\infty  \frac{1}{(\Delta \beta -n)^2}  \left\{ - (\Delta - 1)\cos 2\pi (\beta -\tfrac{n}{\Delta}) + (\Delta + 1) - \frac{\sin 2 \pi (\beta-\tfrac{n}{\Delta}) }{\frac{\pi}{\Delta} (\Delta \beta -n)}\right\} \\
& \ \ \ \ \ \ \ \ - \frac{(\Delta + 1)}{2 \pi^2 \beta \Delta} + O\!\left(\beta^{-2}\right) \\
&:=  G_\Delta(\beta) - \frac{(\Delta + 1)}{2 \pi^2 \beta \Delta} + O\!\left(\beta^{-2}\right),
\end{split}
\end{align}
say. Here we have used the estimate 
\[
\sum_{n\ge 0}  \frac{(\Delta \!-\! 1) \cos 2\pi (\beta\!+\!\tfrac{n}{\Delta})}{(\Delta \beta + n)^2} = O\!\left(\beta^{-2}\right),
\]
which follows summation by parts and the fact that
\[
\sum_{n=0}^N (\Delta\!-\!1)\cos 2\pi (\beta\!+\!\tfrac{n}{\Delta}) = O(\Delta^2)
\]
uniformly in $N$. Notice that
\begin{align*}
G_{\Delta}(\beta) = \lim_{N \rightarrow \infty} \  \frac{1}{2\Delta^2}\sum_{n= -N}^N \int_{0}^1   \left(e^{2\pi i \big(\beta - \frac{n}{\Delta}\big)t} + e^{- 2\pi i \big(\beta - \frac{n}{\Delta}\big) t}\right) \, (\Delta - t)(1 - t)  \,\dt.
\end{align*} 
Since the series defining $G_\Delta(\beta)$  in \eqref{eqn:asympVhaveG} converges uniformly for $\beta$ in a compact set, Morera's theorem can be used to show that $G_{\Delta}(\beta)$ is an analytic function of $\beta$. Thus, we can differentiate $G_{\Delta}(\beta)$ with respect to $\beta$ term-by-term, and it follows from the Riemann-Lebesgue lemma that
\begin{align*}
 G'_\Delta(\beta)  &= \lim_{N \rightarrow \infty}  \ \frac{1}{2\Delta^2}\sum_{n= -N}^N \int_{0}^1   2\pi i t \left(e^{2\pi i \big(\beta - \frac{n}{\Delta}\big)t} - e^{- 2\pi i \big(\beta - \frac{n}{\Delta}\big) t}\right) \, (\Delta - t)(1 - t)  \,\dt \\
&= \lim_{N \rightarrow \infty}  \ -\frac{1}{\Delta^2} \int_{0}^1   2\pi t \, (\Delta - t)(1 - t) \, \sin(2\pi \beta t)\, \frac{\sin (2\pi \left( N + \frac{1}{2}\right)\frac{ t}{\Delta})}{\sin \frac{\pi t}{\Delta}} \,\dt 
\\
&= 0.
\end{align*}
Therefore $G_{\Delta}(\beta)$ is a constant function in $\beta$ and, in order to determine its value, it suffices to evaluate $G_{\Delta}(0).$ 
Using the identities \cite[pp. 927--930]{MP} 
$$ \sum_{n = 1}^{\infty} \frac{\cos nx}{n^2} = \frac{1}{12} \left( 3x^2 -6\pi x +2\pi^2\right), \quad 0 \leq x \leq 2\pi,$$
and 
$$ \sum_{n = 1}^{\infty} \frac{\sin nx}{n^3} = \frac{1}{12} \left(x^3 - 3\pi x^2 + 2\pi^2x\right), \quad 0 \leq x \leq 2\pi,$$
it follows that
\begin{align*} \label{eqn:gDelta0}
G_\Delta(0)  
&= \frac{1}{2\Delta} - \frac{1}{6\Delta^2} + \frac{1}{2\pi^2}\sum_{n = 1}^\infty \left(-\frac{(\Delta - 1)\cos \frac{2\pi n}{\Delta}}{n^2} + \frac{(\Delta + 1)}{n^2} - \frac{\Delta \sin \frac{2\pi n}{\Delta}}{\pi n^3}\right) = \frac{1}{2}.
\end{align*}
Inserting this estimate into (\ref{eqn:asympVhaveG}), we derive that
\begin{equation*}\label{eqn:asymptoticofV}
V_{\Delta}^\pm(\beta) = \frac{1}{2} - \frac{(\Delta + 1)}{2\pi^2 \beta \Delta} + O\left( \beta^{-2}\right),
\end{equation*}
and therefore, from \eqref{final_exp_s}, 
\begin{equation}\label{Final_answer_M_s}
\frac{1}{2} M\big(s_{\Delta, \beta}^{\pm}\big)  =  \left(\beta - \frac{1}{2} \pm \frac{1}{2 \Delta}\right) + \frac{1}{2\pi^2\beta} + O\left(\beta^{-2}\right) .
\end{equation}
In particular, choosing $\Delta = 1,$ we deduce that
\begin{equation*}
\frac{1}{2} M\big(r_{\beta}^{\pm}\big)  =  \left(\beta - \frac{1}{2} \pm \frac{1}{2}\right) + \frac{1}{2\pi^2\beta} + O\left(\beta^{-2}\right),
\end{equation*}
and this concludes the proof of Theorem \ref{Intro_thm2_Gallagher_2}.




\section{Reproducing kernel Hilbert spaces}\label{HS_approach}

Our objective in this section is to prove Theorems \ref{Intro_HS_Thm1_RP} and \ref{Intro_HS_Thm5}. 

\subsection{Equivalence of norms via uncertainty} In order to establish the equivalence of the norms of $\mc{B}_2(\pi,\mu)$ and $\mc{B}_2(\pi)$ we shall make use of the classical uncertainty principle for the Fourier transform. The version we present here is due to Donoho and Stark \cite{DS}. 

\begin{lemma}{\rm (cf. \cite[Theorem 2]{DS})}\label{uncertainty} Let $T,W \subset \R$ be measurable sets and let $f \in L^2(\R)$ with $\|f\|_2 =1$. Then
$$|W|^{1/2}\,.\,|T|^{1/2} \geq  1 - \|f \chi_{\R\setminus T}\|_2 - \|\widehat{f} \chi_{\R\setminus W}\|_2 ,$$
where $|W|$ denotes the Lebesgue measure of the set $W$. 
\end{lemma}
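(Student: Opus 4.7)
The plan is to follow the standard time-and-band limiting approach due to Landau--Slepian--Pollak. Introduce two bounded operators on $L^2(\R)$: the \emph{time-limiting} operator $P_T g = g\,\chi_T$, and the \emph{band-limiting} operator $B_W$ defined by $\widehat{B_W g} = \widehat{g}\,\chi_W$. Both are orthogonal projections, hence each has operator norm at most $1$. Setting $\epsilon_T = \|f\chi_{\R\setminus T}\|_2 = \|f - P_T f\|_2$ and, by Plancherel, $\epsilon_W = \|\widehat{f}\chi_{\R\setminus W}\|_2 = \|f - B_W f\|_2$, the desired inequality is exactly
$$|T|^{1/2}|W|^{1/2} \;\geq\; 1 - \epsilon_T - \epsilon_W.$$

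The first step is to bound $\|B_W P_T f\|_2$ from below. By the triangle inequality together with $\|B_W\|_{\mathrm{op}} \leq 1$,
$$\|f - B_W P_T f\|_2 \;\leq\; \|f - B_W f\|_2 + \|B_W(f - P_T f)\|_2 \;\leq\; \epsilon_W + \epsilon_T,$$
so that $\|B_W P_T f\|_2 \geq \|f\|_2 - \epsilon_T - \epsilon_W = 1 - \epsilon_T - \epsilon_W$.

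The second step is to show $\|B_W P_T\|_{\mathrm{op}} \leq |T|^{1/2}|W|^{1/2}$. Applying Fubini to the definition of $B_W P_T$ shows that it is an integral operator with kernel $K(x,y) = \chi_T(y)\int_W e^{2\pi i\xi(x-y)}\,\dd\xi$. For each fixed $y$, Plancherel's theorem applied to $\chi_W$ gives $\int_\R |K(x,y)|^2\,\dx = |W|\,\chi_T(y)$, and hence the Hilbert--Schmidt norm satisfies $\|B_W P_T\|_{HS}^2 = |T||W|$. Since the operator norm is dominated by the Hilbert--Schmidt norm, the conclusion follows by chaining
$$|T|^{1/2}|W|^{1/2} \;\geq\; \|B_W P_T\|_{\mathrm{op}} \;\geq\; \|B_W P_T f\|_2 \;\geq\; 1 - \epsilon_T - \epsilon_W.$$
No step presents a serious obstacle; the only substantive computation is the Hilbert--Schmidt estimate, which rests entirely on Plancherel's theorem.
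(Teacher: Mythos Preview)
Your proof is correct and is precisely the standard Donoho--Stark argument: the paper does not supply its own proof of this lemma but simply cites \cite[Theorem~2]{DS}, and what you have written is essentially that proof. One small point worth making explicit is that you may assume $|T|,|W|<\infty$ (otherwise the inequality is trivial), which is needed for the Hilbert--Schmidt computation to make sense.
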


\begin{lemma} \label{Lem_equiv_norms}
Let $f$ be entire. Then $f\in \mc{B}_2(\pi)$ if and only if $f\in \mc{B}_2(\pi,\mu)$.  Moreover, there exists $c>0$ independent of $f$ such that
$$c\|f\|_{2} \le \|f\|_{L^2(\dmu)} \le  \|f\|_{2}$$
for all $f\in \mc{B}_2(\pi)$. 
\end{lemma}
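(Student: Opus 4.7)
The upper bound $\|f\|_{L^2(\dmu)} \le \|f\|_2$ is immediate from the pointwise inequality $0 \le 1 - (\sin \pi x/\pi x)^2 \le 1$, so the content of the lemma lies in the reverse inequality. The intuition is that the density $1-(\sin\pi x/\pi x)^2$ vanishes only at $x=0$ (to second order) and stays bounded away from $0$ on any set $\{|x|\ge a\}$; hence the norms can differ only if $f$ has mass concentrated very near the origin, and the uncertainty principle prevents this from happening for $f\in \mc{B}_2(\pi)$ because such an $f$ has Fourier transform supported in $[-1/2,1/2]$, a set of Lebesgue measure $1$.

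The plan is as follows. First, I would record the elementary observation that the continuous function $x \mapsto 1-(\sin\pi x/\pi x)^2$ equals $1$ only at $x=0$ and satisfies the bound $(\sin\pi x/\pi x)^2 \le (\pi x)^{-2}$ for $|x|$ large; together these give, for every $a>0$, a constant $c_0=c_0(a)>0$ with
\[
1 - \left(\frac{\sin\pi x}{\pi x}\right)^2 \;\ge\; c_0 \quad \text{for all } |x|\ge a.
\]
Next, normalize $f\in \mc{B}_2(\pi)$ so that $\|f\|_2=1$. By the Paley--Wiener theorem, $\widehat{f}$ is supported in the set $W=[-1/2,1/2]$ of measure $|W|=1$. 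Taking $T=[-a,a]$ in Lemma \ref{uncertainty}, the term $\|\widehat{f}\chi_{\R\setminus W}\|_2$ vanishes and we obtain
\[
\sqrt{2a} \;\ge\; 1 - \|f\chi_{\R\setminus T}\|_2,
\]
so that $\|f\chi_{\R\setminus T}\|_2 \ge 1-\sqrt{2a}$.

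I would then fix $a\in(0,1/2)$ once and for all so that $1-\sqrt{2a}>0$, and combine the two previous steps:
\[
\|f\|_{L^2(\dmu)}^2 \;\ge\; \int_{|x|\ge a} |f(x)|^2 \left\{1-\left(\frac{\sin\pi x}{\pi x}\right)^2\right\}\dx \;\ge\; c_0 (1-\sqrt{2a})^2,
\]
and by homogeneity this yields $\|f\|_{L^2(\dmu)} \ge c\|f\|_2$ with $c = \sqrt{c_0}\,(1-\sqrt{2a})>0$ independent of $f$. This simultaneously gives the equivalence of norms and the fact that $f\in\mc{B}_2(\pi,\mu)$ forces $f\in\mc{B}_2(\pi)$ (the other direction is the trivial upper bound).

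The only mild obstacle is the quantitative separation of $1-(\sin\pi x/\pi x)^2$ from zero away from the origin; this is elementary but must be checked using the oscillatory nature of the sinc function (one needs the strict inequality $(\sin\pi x/\pi x)^2<1$ for $x\ne 0$ together with decay at infinity, rather than monotonicity). Everything else is a direct application of Donoho--Stark together with Paley--Wiener.
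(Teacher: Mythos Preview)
Your approach is essentially identical to the paper's: both invoke the Donoho--Stark uncertainty principle with $W=[-\tfrac12,\tfrac12]$ and a short interval $T$ around the origin (the paper simply fixes $T=[-\tfrac18,\tfrac18]$), then use that the density $1-(\sin\pi x/\pi x)^2$ is bounded below by a positive constant outside $T$.

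One small logical slip: your claim that the lower bound ``simultaneously gives \ldots\ that $f\in\mc{B}_2(\pi,\mu)$ forces $f\in\mc{B}_2(\pi)$'' is circular, since your proof of the lower bound already assumed $f\in\mc{B}_2(\pi)$ in order to apply Paley--Wiener. The paper handles this implication separately and directly: if $f\in\mc{B}_2(\pi,\mu)$, the density bound away from the origin gives $\int_{|x|\ge a}|f(x)|^2\,\dx \le c_0^{-1}\|f\|_{L^2(\dmu)}^2<\infty$, and continuity of the entire function $f$ on $[-a,a]$ handles the remaining piece, so $\|f\|_2<\infty$. All the ingredients for this fix are already present in your argument.
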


\begin{proof} Since $\mu$ is absolutely continuous with respect to the Lebesgue measure, it is clear that
\[
\|f\|_{L^2(\dmu)} \le \|f\|_{2}
\]
for all $f\in  \mc{B}_2(\pi)$, so in particular $\mc{B}_2(\pi)\subseteq \mc{B}_2(\pi,\mu)$. 

\smallskip

Now let $f\in\mc{B}_2(\pi,\mu)$. Since $f$ is entire, it is in particular continuous at the origin, hence $\|f\|_{2}<\infty$ and $f\in \mc{B}_2(\pi)$. It remains to show that there exists $c$, independent of $f$, with $c\|f\|_2\le  \|f\|_{L^2(\dmu)} $. We let $T = [-\tfrac18, \tfrac18]$,  $W = [-\tfrac12, \tfrac12]$ and use Lemma \ref{uncertainty} to get
$$\|f \chi_{\R\setminus T}\|_2 \geq \frac{1}{2} \|f\|_2.$$
Let $0 < \eta<1$ be such that 
$$\eta^2\, \chi_{\R\setminus T}(x) \le \left\{ 1 - \left(\frac{\sin \pi x}{\pi x}\right)^2\right\}.$$
Then
$$\frac{\eta}{2}\,  \|f\|_2 \le \eta \,\|f\chi_{\R\setminus T}\|_2 \le \|f\|_{L^2(\dmu)}.$$
This completes the proof of the lemma.
\end{proof}


\subsection{Proof of Theorem \ref{Intro_HS_Thm1_RP}} 
We start by recording the expansions:
\begin{align}\label{kw-pieces-rep}
\begin{split}
f(w,x) &=  \frac{2\pi^2  w^2}{(2\pi^2 w^2 - 1)} \int_{-\frac12}^{\frac12} e^{2\pi i x t} \, e^{-2\pi i  w t} \,\dt,\\
g(x) &= \int_{-\frac12}^{\frac12} e^{2\pi i x t} \cos\big(2^{\frac12} t\big) \,\dt,\\
h(x) &= -i \int_{-\frac12}^{\frac12}e^{2\pi i x t}  \sin \big(2^{\frac12} t\big)\,  \dt.
\end{split}
\end{align}
Define 
\begin{align*}
\kappa_w(x) &:=  f(w,x) + c(w) g(x) + d(w) h(x),\\
\ell_w(x) &:= \left\{ 1 - \left(\frac{\sin \pi x}{\pi x}\right)^2\right\}\kappa_w(x),
\end{align*}
and
$$j_w(t) := \chi_{[-\frac12,\frac12]}(t) \left\{  \frac{2\pi^2  w^2}{(2\pi^2 w^2 - 1)} e^{-2\pi i w t} + c(w) \cos\big(2^{\frac12} t\big) - i\, d(w) \, \sin\big(2^{\frac12} t\big)\right\}. $$
It follows from \eqref{kw-pieces-rep} that
\begin{align}\label{HS_eq4}
\begin{split}
\ell_w(x) &=\left\{ 1 - \left(\frac{\sin \pi x}{\pi x}\right)^2\right\}\int_{-\infty}^\infty e^{2\pi i x t} \,j_w(t) \,\dt \\
&= \int_{-\infty}^\infty e^{2\pi i x t} \left\{ j_w(t) -\int_{-1}^1 (1-|u|) \,j_w(t-u) \,\du   \right\} \dt.
\end{split}
\end{align}
Let $-\frac12 < t < \frac12$. The following identities hold for $a \in \R$:
 \begin{align*}
 \int_{t-\frac12}^{t+\frac12} \cos(a(t-u))\, \du & = \frac2a \sin(a/2),\\
 \int_{t-\frac12}^{t+\frac12} |u| \cos(a(t-u))\, \du &= \frac{2}{a^2} \cos(a/2) -\frac{2}{a^2} \cos(at) +\frac1a \sin(a/2),
 \end{align*}
and therefore
\begin{align}\label{HS_cos-int}
\begin{split}
\cos(at) \,- &\int_{-1}^{1} (1-|u|)\, \chi_{[-\frac12,\frac12]}(t-u) \cos(a(t-u))\,\du \\
&=-\frac1a \sin(a/2) + \frac{2}{a^2} \cos(a/2) + \left(1- \frac{2}{a^2}\right) \cos(at).
\end{split}
\end{align}
Similarly, we have
\begin{align}\label{HS_sin-int}
\begin{split}
\sin(at) \, - &\int_{-1}^{1} (1-|u|) \,\chi_{[-\frac12,\frac12]}(t-u) \,\sin(a(t-u))\,\du\\
&=\frac{2t}{a}\cos(a/2)  +\left(1-\frac{2}{a^2}\right) \sin(at). 
\end{split}
\end{align}
Letting $a=2^{\frac12}$ in \eqref{HS_cos-int} and \eqref{HS_sin-int} gives, for $|t| < \frac12$, the identities
\begin{align}\label{HS_eq1}
\begin{split}
\cos\big(2^{\frac12} t\big)  - \int_{-1}^1 (1-|u|)\,\chi_{[-\frac12,\frac12]}(t-u) \,\cos\big(2^{\frac12} (t-u)\big) \,\du  &= \cos\big(2^{-\frac12}\big) -2^{-\frac12} \sin\big(2^{-\frac12}\big),\\
\sin\big(2^{\frac12} t\big) - \int_{-1}^1 (1-|u|) \,\chi_{[-\frac12,\frac12]}(t-u) \,\sin\big(2^{\frac12} (t-u)\big) \,\du &= 2^{\frac12} t \cos\big(2^{-\frac12}\big) ,
\end{split}
\end{align}
while the choice $a = -2\pi  w $, for $|t|< \frac12$,  gives
\begin{align}\label{HS_eq2_0}
\begin{split}
 \frac{2\pi^2  w^2}{(2\pi^2 w^2 - 1)} &\left(e^{-2\pi i w t} - \int_{-1}^1 (1-|u|)\,\chi_{[-\frac12,\frac12]}(t-u)  \,e^{-2\pi i (t-u)w} \,\du\right) \\
 &=  e^{-2\pi i  w t} - \frac{(1-2\pi i  w t) \cos(\pi w) -\pi w\sin(\pi w)}{1-2\pi^2 w^2 }.
\end{split}
\end{align}

\smallskip

We note that $\ell_w$ has exponential type at most $3\pi$. When inserting \eqref{Intro_HS_eq3}, \eqref{HS_eq1} and \eqref{HS_eq2_0} into \eqref{HS_eq4}, the linear functions (of the variable $t$) from \eqref{HS_eq1} multiplied by $c(w)$ and $d(w)$ eliminate, for $|t|< \frac12$, the linear function in \eqref{HS_eq2_0}. Hence we obtain  
$$\ell_w(x) = \int_{-3/2}^{3/2} e^{2\pi i x t} \left(e^{-2\pi i wt} + q_w(t)\right) \dt\,,$$
where $q_w(t) =0$ for $-\frac12<t<\frac12$. Therefore
$$\ell_w(x) = \frac{\sin\pi(x-w)}{\pi(x-w)} + Q_w(x),$$
where
$$\int_{-\infty}^\infty f(x) \,Q_w(x) \,\dx =0$$ 
for all $f\in \mc{B}_2(\pi,\mu)$. This implies that 
\begin{align*}
\int_{-\infty}^\infty f(x) \, \kappa_w(x)\,\left\{ 1 - \left(\frac{\sin \pi x}{\pi x}\right)^2\right\}\dx = f(w)
\end{align*}   
for all $f\in \mc{B}_2(\pi,\mu)$. Thus $\overline{\kappa_w}$ is a reproducing kernel, and since such a kernel is unique, it follows that  $K(w,x) = \overline{\kappa_w(x)}$ as desired. This concludes the proof.

\smallskip

\noindent{\sc Remark.}  The initial guess for the reproducing kernel was found in the following way. The starting point is the function $\ell_w$ introduced in the above proof. A Fourier transform leads to the identity
\begin{equation}\label{HS_original_integral_equation}
e^{-2\pi i t w} = \widehat{\kappa}_w(t) - \int_{-1}^1 (1-|u|) \,\widehat{\kappa}_w(t-u) \,\du
\end{equation}
for $|t|<\frac12$, and two (formal) differentiations (using the fact that the second derivative of $(1-|u|)\chi_{[-1,1]}(u)$ is a linear combination of three Dirac deltas) lead to the equation
$$-4\pi^2 w^2 e^{-2\pi i t w} = \widehat{\kappa}_w''(t) - \big(\widehat{\kappa}_w(t+1) + \widehat{\kappa}_w(t-1) - 2\widehat{\kappa}_w(t)\big).$$
If $\kappa_w$ has exponential type $\pi$, then for $|t|<\frac12$ this equation simplifies to
$$
-4\pi^2 w^2 e^{-2\pi i t w} = \widehat{\kappa}_w''(t) + 2\widehat{\kappa}_w(t),$$
which can be solved explicitly. The original integral equation \eqref{HS_original_integral_equation} determines the two free parameters $c(w)$ and $d(w)$.



\subsection{A geometric lemma} Before we proceed to the proof of Theorem \ref{Intro_HS_Thm5} we present a basic lemma\footnote{A similar version of this result was independently obtained by M. Kelly and J. D. Vaaler (personal communication).} on the geometry of Hilbert spaces.

\begin{lemma}\label{HS_geometric_lemma}
Let $H$ be a Hilbert space (over $\C$) with norm $\| \cdot \|$ and inner product $\langle \cdot, \cdot \rangle$. Let $v_1, v_2 \in H$ be two nonzero vectors (not necessarily distinct) such that $\|v_1\| = \|v_2\|$ and define
$$\J = \big\{ x \in H; \ |\langle x, v_1\rangle |  \geq 1 \ {\rm and} \ |\langle x, v_2\rangle |  \geq 1\big\}.$$
Then
\begin{equation}\label{HS_Lemma_eq1}
\min_{x \in \J}\|x\| = \left(\frac{2}{\big(\|v_1\|^2 + |\langle v_1, v_2\rangle|\big)}\right)^{1/2}.
\end{equation}
The extremal vectors $y \in \J$ are given by:
\begin{enumerate}
\item[(i)] If $\langle v_1, v_2\rangle = 0$, then 
\begin{equation}\label{HS_def_x_theta_0}
y =   \left(\frac{2}{\big(\|v_1\|^2 + |\langle v_1, v_2\rangle|\big)}\right)^{1/2} \,\frac{(c_1v_1 + c_2v_2)}{\left\|v_1 + v_2\right\|}\,,
\end{equation} 
where $c_1, c_2 \in \C$ with $|c_1| = |c_2| =1$. 
\smallskip
\item[(ii)] If $\langle v_1, v_2\rangle \neq 0$, and we write $\langle v_1, v_2\rangle =  \,e^{-i\alpha} \,|\langle v_1, v_2\rangle|$, then 
\begin{equation}\label{HS_def_x_theta}
y =   \left(\frac{2}{\big(\|v_1\|^2 + |\langle v_1, v_2\rangle|\big)}\right)^{1/2}\,  \frac{c\,(e^{i\alpha } v_1 + v_2)}{\left\| e^{i\alpha} v_1 + v_2\right\|}\,,
\end{equation} 
where $c \in \C$ with $|c| =1$. 
\end{enumerate}
\end{lemma}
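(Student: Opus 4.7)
\smallskip

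\noindent\textbf{Proof plan.} The strategy is to reduce the problem to a two-dimensional optimization by orthogonal projection onto $\mathrm{span}(v_1,v_2)$, exploit the phase symmetry of the constraints to reduce further to a one-parameter minimization, and then solve explicitly.

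I would first argue that at any minimizer $x \in \J$, both constraints are active: $|\langle x, v_1\rangle| = |\langle x, v_2\rangle| = 1$. If both moduli exceeded $1$, one could rescale $x$ down. If only the first constraint were active, letting $w$ denote the projection of $x$ onto $v_1^{\perp}$, the perturbation $x \mapsto x - \epsilon w$ leaves $\langle x, v_1\rangle$ unchanged, preserves $|\langle x, v_2\rangle| > 1$ for small $\epsilon$, and strictly decreases $\|x\|^2$ unless $w = 0$. Thus $x \in \mathrm{span}(v_1)$, which combined with $|\langle x, v_2\rangle| \geq 1$ and Cauchy--Schwarz forces $|\langle v_1, v_2\rangle| = \|v_1\|^2$, i.e.\ $v_2 = \gamma v_1$ with $|\gamma| = 1$. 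This degenerate case is easy: both constraints coincide and the result follows by direct computation, matching the stated formula.

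In the non-degenerate case, the substitution $x \mapsto e^{-i\arg\langle x, v_1\rangle} x$ preserves $\|x\|$ and $\J$, so I may assume $\langle x, v_1\rangle = 1$ and write $\langle x, v_2\rangle = e^{i\theta}$ for some $\theta \in \R$. Decomposing $x = y + z$ with $y \in \mathrm{span}(v_1, v_2)$ and $z \perp v_1, v_2$, the constraints depend only on $y$ while $\|x\|^2 = \|y\|^2 + \|z\|^2$, so any minimizer has $z = 0$. Writing $y = av_1 + bv_2$, $N := \|v_1\|^2 = \|v_2\|^2$, and $p := \langle v_1, v_2\rangle$, the constraints become the $2 \times 2$ linear system $aN + b\bar p = 1$ and $ap + bN = e^{i\theta}$, with determinant $N^2 - |p|^2 \neq 0$. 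Solving and computing $\|y\|^2 = a + be^{-i\theta}$ yields
\begin{equation*}
\|y\|^2 = \frac{2\bigl(N - \re(pe^{-i\theta})\bigr)}{N^2 - |p|^2}.
\end{equation*}

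Minimizing over $\theta$ is equivalent to maximizing $\re(pe^{-i\theta})$, whose maximum is $|p|$; this gives $\min \|x\|^2 = 2/(N+|p|)$, proving \eqref{HS_Lemma_eq1}. In case (ii), $p = |p|e^{-i\alpha} \neq 0$, the maximum is attained uniquely at $\theta = -\alpha$, producing $a = 1/(N+|p|)$ and $b = e^{-i\alpha}/(N+|p|)$, hence $y = e^{-i\alpha}(e^{i\alpha}v_1 + v_2)/(N+|p|)$; reinstating the initial phase rotation (multiplication by arbitrary $c$ with $|c| = 1$) recovers \eqref{HS_def_x_theta} after noting $\|e^{i\alpha}v_1 + v_2\| = \sqrt{2(N+|p|)}$. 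In case (i), $p = 0$, the expression for $\|y\|^2$ is independent of $\theta$, so every $\theta$ is extremal; the phase rotation then produces the two-parameter family $(c_1 v_1 + c_2 v_2)/N$ with $|c_1| = |c_2| = 1$, matching \eqref{HS_def_x_theta_0} after normalization by $\|v_1 + v_2\| = \sqrt{2N}$. The delicate point is the extremal characterization: one must carefully track the residual phase freedom, which collapses to one parameter in case (ii) but expands to two in case (i) because the $\theta$-direction becomes fully degenerate when $p = 0$.
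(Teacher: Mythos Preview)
Your proof is correct and complete, but it takes a genuinely different route from the paper's. After reducing to $\mathrm{span}(v_1,v_2)$, the paper rotates \emph{both} vectors, replacing $v_j$ by $v_j' = e^{i\vartheta_j}v_j$ so that $\langle y, v_j'\rangle \geq 1$ are both real; it then exploits the symmetry that if $y = av_1' + bv_2'$ satisfies these constraints, so does $y' = \bar{b}v_1' + \bar{a}v_2'$, and the parallelogram law forces $b = \bar{a}$ at a minimizer. This collapses the problem to a single complex parameter $a$, and the two constraints become complex conjugates. By contrast, you normalize only the first inner product to $1$, introduce the relative phase $\theta$ of the second, solve the $2\times 2$ system explicitly, and then minimize the resulting closed-form $\|y\|^2 = 2(N - \re(pe^{-i\theta}))/(N^2 - |p|^2)$ over $\theta$. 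Your argument is more directly computational and makes the role of the phase degeneracy in case (i) versus case (ii) more transparent; the paper's symmetry argument is slicker but less illuminating about why the extremal family has one parameter when $p\neq 0$ and two when $p=0$. One minor point of presentation: you invoke a minimizer before establishing existence; it would be cleaner to project to $\mathrm{span}(v_1,v_2)$ first (as the paper does) so that existence is immediate by finite-dimensional compactness, and only then run your active-constraint perturbation.
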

\begin{proof}
If $v_1$ and $v_2$ are linearly dependent the result is easy to verify, so we focus on the general case. The verification that each $y$ given by \eqref{HS_def_x_theta} belongs to $\J$ and has norm given by the right-hand side of \eqref{HS_Lemma_eq1} is straightforward. Now let 
$$\kappa := \min_{x \in \J}\|x\|\,,$$
and let $y \in \J$ be such that $\|y\| = \kappa$ (observe that such an extremal vector exists since we may restrict the search to the subspace $ {\rm span}\{v_1, v_2\}$). We consider $v_1' = e^{i\vartheta_1}v_1$ and $v_2' = e^{i\vartheta_2}v_2$ for appropriate choices of $\vartheta_1$ and $\vartheta_2$ such that
\begin{equation}\label{HS_eq2}
\langle y, v_1' \rangle \geq 1\ \  {\rm and} \ \  \langle y, v_2' \rangle \geq 1.
\end{equation} 
Since $y \in {\rm span}\{v_1', v_2'\}$, we write
$$y = a \,v_1' + b\,v_2'\,,$$
where $a,b \in \C$. The fact that $y$ satisfies \eqref{HS_eq2} implies that 
$$y' = \ov{b}\,v_1' + \ov{a}\,v_2'$$
also satisfies \eqref{HS_eq2} and thus belongs to $\J$. Therefore $z = (y + y')/2$ also satisfies \eqref{HS_eq2} and belongs to $\J$. If $y \neq y'$, from the parallelogram law we have
$$\|z\|^2  < \left\| \frac{y + y'}{2}\right\|^2 + \left\| \frac{y - y'}{2}\right\|^2= \frac{1}{2}(\|y\|^2 + \|y'\|^2) = \|y\|^2 = \kappa^2,$$
a contradiction. Therefore $b = \ov{a}$ and we have
\begin{equation}\label{HS_eq3_y_final}
y = a \,v_1' + \ov{a}\,v_2'.
\end{equation}
Having reduced our considerations to a vector $y$ of the form \eqref{HS_eq3_y_final}, we see that the two conditions in \eqref{HS_eq2} are complex conjugates, and we may work with only one of them, say $\langle y, v_1' \rangle \geq 1$. Since $\|y\| = \kappa$ is minimal, we must have the equality $\langle y, v_1' \rangle = 1$. This translates to
\begin{equation}\label{HS_key_a_eq1}
a \|v_1'\|^2 + \ov{a} \, \langle v_2', v_1' \rangle = 1,
\end{equation}
and we find
\begin{align*}
\|y\|^2 & = \left(|a|^2\|v_1'\|^2 + \ov{a}^2 \langle v_2', v_1' \rangle\right) + \left(|a|^2\|v_2'\|^2 + a^2 \langle v_1', v_2' \rangle\right)  = \ov{a} + a  = 2\, \re(a).
\end{align*}
By solving the system of equations \eqref{HS_key_a_eq1} in the variables $\re(a)$ and $\im(a)$ we arrive at
\begin{align}\label{HS_real_part}
\kappa^2 = \|y\|^2 \, = \, 2\, \re(a) \,   = \, 2 \frac{\|v_1'\|^2 - \re(\langle v_1', v_2'\rangle)}{\|v_1'\|^4 - |\langle v_1', v_2'\rangle|^2}
\,  \geq \, 2 \frac{\|v_1\|^2 - |\langle v_1, v_2\rangle|}{\|v_1\|^4 - |\langle v_1, v_2\rangle|^2}
\,  = \, \frac{2}{\|v_1'\|^2 + |\langle v_1', v_2'\rangle|},
\end{align}
and 
\begin{equation}\label{HS_im_part}
\im(a) = \frac{\im(\langle v_1', v_2'\rangle)}{\|v_1'\|^4 - |\langle v_1', v_2'\rangle|^2}.
\end{equation}
We have equality in \eqref{HS_real_part} if and only if $\langle v_1', v_2'\rangle \geq 0$. If $\langle v_1', v_2'\rangle = 0$, then $\vartheta_1$ and $\vartheta_2$ are arbitrary and $a \geq 0$. This leads to the family in \eqref{HS_def_x_theta_0}. If $\langle v_1', v_2'\rangle \neq 0$, then we must have $\vartheta_1 \equiv \vartheta_2 + \alpha \, \,({\rm mod} \,2\pi)$ and $a \geq 0$, which leads to the family in \eqref{HS_def_x_theta}.
\end{proof}

\subsection{Proof of Theorem \ref{Intro_HS_Thm5}}
Let $R$ be a nonnegative admissible function such that $R(\pm \beta) \geq 1$. Since $R$ has exponential type at most $2\pi$, by Krein's decomposition \cite[p. 154]{A} we have 
$$R(z) = S(z)\,\ov{S(\ov{z})},$$ 
where $S$ is  an entire function of exponential type at most $\pi$. On the real line we have $R(x) = |S(x)|^2$ and thus $S \in L^2(\R)$. Therefore, the function $S$ belongs to the reproducing kernel Hilbert space $\mc{H} = \mc{B}_2(\pi,\mu)$. The hypotheses imply that 
$$1 \leq |S(\beta)| =\big|\langle S, K(\beta, \cdot)\rangle_{\mc{H}} \big|$$
and
$$1 \leq |S(-\beta)| =\big|\langle S, K(-\beta, \cdot)\rangle_{\mc{H}} \big|.$$
We want to minimize the quantity
$$\|S\|^2_{\mc{H}} = \int_{-\infty}^{\infty} |S(x)|^2 \left\{ 1 - \left(\frac{\sin \pi x}{\pi x}\right)^2\right\} \,\dx.$$
By the reproducing kernel property and the symmetry of the pair correlation measure (alternatively, one can check directly by Theorem \ref{Intro_HS_Thm1_RP}), we have 
$$\|K(\beta, \cdot)\|^2_{\mc{H}} = K(\beta, \beta) = K(-\beta, -\beta) = \|K(-\beta, \cdot)\|^2_{\mc{H}}.$$ 
We are thus in position to use Lemma \ref{HS_geometric_lemma} to derive that 
\begin{align}\label{HS_asym_beta}
\|S\|^2_{\mc{H}}  \geq \frac{2}{K(\beta, \beta) + |\langle K(\beta, \cdot), K(-\beta, \cdot)\rangle_{\mc{H}}|} =  \frac{2}{ K(\beta, \beta) + |K(\beta, -\beta)|}.
\end{align}
The cases of equality in \eqref{HS_asym_beta} follow from \eqref{HS_def_x_theta_0} and \eqref{HS_def_x_theta}.

\smallskip

It remains to verify the asymptotic behavior on the right-hand side of \eqref{Intro_HS_Thm2_eq1} as $\beta \to \infty$. From Theorem \ref{Intro_HS_Thm1_RP} we get
$$K(\beta, \beta) =  \frac{2\pi^2  {\beta}^2}{(2\pi^2 {\beta}^2 - 1)} +  c(\beta) g(\beta) + d(\beta) h(\beta)$$
and 
\begin{align*}
K(\beta,-\beta) & =  \frac{2\pi^2  {\beta}^2}{(2\pi^2 {\beta}^2 - 1)} \frac{\sin2\pi \beta}{2\pi\beta}+  c(\beta) g(-\beta) + d(\beta) h(-\beta)\\
& =  \frac{2\pi^2  {\beta}^2}{(2\pi^2 {\beta}^2 - 1)} \frac{\sin2\pi \beta}{2\pi\beta}+  c(\beta) g(\beta) - d(\beta) h(\beta).
\end{align*}
Therefore, if $K(\beta,-\beta) \geq 0$ we have
\begin{align}\label{HS_Asym_1}
K(\beta,\beta) + |K(\beta,-\beta)| =  \frac{2\pi^2  {\beta}^2}{(2\pi^2 {\beta}^2 - 1)} \left(1 + \frac{\sin2\pi \beta}{2\pi\beta}\right) + 2 c(\beta) g(\beta),
\end{align} 
and if $K(\beta,-\beta) \leq 0$ we have
\begin{align}\label{HS_Asym_2}
K(\beta,\beta) + |K(\beta,-\beta)|=  \frac{2\pi^2  {\beta}^2}{(2\pi^2 {\beta}^2 - 1)} \left(1 - \frac{\sin2\pi \beta}{2\pi\beta}\right) + 2 d(\beta) h(\beta).
\end{align} 
Observe that $c(\beta) g(\pm\beta) = O(\beta^{-2})$ and that $d(\beta) h(\pm\beta) = O(\beta^{-2})$. We then have two cases to consider. First, if for large $\beta$ we have 
$$ \frac{\sin2\pi \beta}{2\pi\beta} = O\left(\beta^{-2}\right)\,,$$
then the asymptotic on the right-hand side of \eqref{Intro_HS_Thm2_eq1} is trivially true. Otherwise,   
$$\frac{\sin 2\pi \beta}{2\pi\beta} = O\big((\beta + 1)^{-1}\big).$$
Hence
$K(\beta, -\beta)$ will have the sign of  $\frac{\sin2\pi \beta}{2\pi\beta}$, and we use \eqref{HS_Asym_1} and \eqref{HS_Asym_2} to get the desired asymptotic. This concludes the proof.

\subsection{The one-delta problem} Our methods can also be used to recover the original result of Montgomery and Taylor \cite{M2} concerning the optimal majorant for the delta function with respect to the pair correlation measure. This problem was also solved, in a more general context, by Iwaniec, Luo and Sarnak \cite[Appendix A]{ILS}.

\begin{corollary}[cf. \cite{M2}]\label{cor_Mont_Taylor}
Let $R$ be a nonnegative admissible function such that  $R(0) \geq 1$. Then
\begin{align}\label{eq_Mont_Taylor}
\begin{split}
M(R) &  \geq  \frac{1}{ K(0, 0)} = 2^{-\frac12} \,\cot\big(2^{-\frac12}\big) - \frac{1}{2} = 0.3274992 \ldots
\end{split}
\end{align}
Equality in \eqref{eq_Mont_Taylor} is attained if and only if 
$$R(z) = \frac{1}{(1- 2\pi^2 z^2)^2}\left( \cos (\pi z) - 2^{\frac12}\pi z \cot \big(2^{-\frac12}\big) \sin(\pi z) \right)^2.$$
\end{corollary}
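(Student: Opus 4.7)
The plan is to adapt the proof of Theorem \ref{Intro_HS_Thm5} to the simpler setting of a single pointwise constraint. With only one interpolation condition, the two-vector geometric Lemma \ref{HS_geometric_lemma} collapses to a one-line Cauchy-Schwarz inequality in the reproducing kernel Hilbert space $\mathcal{H} = \mathcal{B}_2(\pi,\mu)$, so the entire argument is short and the substantive content is the explicit evaluation of $K(0,0)$ via Theorem \ref{Intro_HS_Thm1_RP}.

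First I would write the nonnegative admissible function $R$ via Krein's decomposition \cite[p.~154]{A} as $R(z) = S(z)\overline{S(\overline{z})}$, where $S$ is entire of exponential type at most $\pi$. Since $R \in L^1(\R)$ and $|S(x)|^2 = R(x)$ on $\R$, we have $S \in L^2(\R)$, hence $S \in \mathcal{H}$ by Lemma \ref{Lem_equiv_norms}. The hypothesis $R(0)\geq 1$ gives $|S(0)|\geq 1$, and the reproducing kernel property combined with Cauchy-Schwarz yields
\begin{equation*}
1 \leq |S(0)|^2 = \bigl|\langle S, K(0,\cdot)\rangle_{\mathcal{H}}\bigr|^2 \leq \|S\|_{\mathcal{H}}^{2}\,K(0,0) = M(R)\cdot K(0,0).
\end{equation*}
Thus $M(R) \geq 1/K(0,0)$, with equality if and only if $S$ is a scalar multiple of $K(0,\cdot)$.

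Next I would compute $K(0,0)$ from Theorem \ref{Intro_HS_Thm1_RP}. Evaluating at $w=0$, the prefactor $2\pi^2 w^2$ forces $f(0,z) = 0$, and the numerator $2\pi w \cos(\pi w)$ of $d(w)$ gives $d(0) = 0$, so only one term survives: $K(0,z) = c(0)\,g(z)$. A direct evaluation gives
\begin{equation*}
c(0) = \frac{1}{\cos\bigl(2^{-1/2}\bigr) - 2^{-1/2}\sin\bigl(2^{-1/2}\bigr)}, \qquad g(0) = 2^{1/2}\sin\bigl(2^{-1/2}\bigr),
\end{equation*}
so
\begin{equation*}
\frac{1}{K(0,0)} = \frac{\cos\bigl(2^{-1/2}\bigr) - 2^{-1/2}\sin\bigl(2^{-1/2}\bigr)}{2^{1/2}\sin\bigl(2^{-1/2}\bigr)} = 2^{-1/2}\cot\bigl(2^{-1/2}\bigr) - \tfrac{1}{2},
\end{equation*}
which is the claimed constant $0.3274992\ldots$.

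Finally, to identify the extremal function, I would take $S(z) = K(0,z)/K(0,0) = g(z)/g(0)$, which is real entire since $g$ is; hence $R(z) = S(z)\overline{S(\overline z)} = S(z)^2$. Dividing the explicit expression for $g(z)$ by $g(0) = 2^{1/2}\sin(2^{-1/2})$ and using $2/2^{1/2} = 2^{1/2}$ yields
\begin{equation*}
S(z) = \frac{\cos(\pi z) - 2^{1/2}\pi z\cot\bigl(2^{-1/2}\bigr)\sin(\pi z)}{1 - 2\pi^2 z^{2}},
\end{equation*}
and squaring gives the stated formula for $R$. The only step requiring any care is the algebraic simplification producing the clean form $2^{-1/2}\cot(2^{-1/2}) - \tfrac12$; I do not foresee any substantive obstacle, since the Hilbert space framework of Theorem \ref{Intro_HS_Thm1_RP} makes the whole argument essentially a computation.
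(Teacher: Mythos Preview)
Your proposal is correct and follows essentially the same approach as the paper: Krein's factorization $R=SS^*$, the one-line Cauchy--Schwarz inequality $1\le |S(0)|^2 \le \|S\|_{\mathcal H}^2 K(0,0)$, and then the explicit evaluation of $K(0,z)=c(0)g(z)$ via Theorem~\ref{Intro_HS_Thm1_RP} to identify both the constant and the extremal function. The paper's proof is line-for-line the same, differing only in that it leaves the intermediate computation of $c(0)$ and $g(0)$ implicit.
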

\begin{proof}
As in the proof of Theorem \ref{Intro_HS_Thm5} we may write $R(z) = S(z)\,\ov{S(\ov{z})}$, where $S \in \mc{H} = \mc{B}_2(\pi,\mu)$. Using the Cauchy-Schwartz inequality we get 
\begin{align*}
1 & \leq |S(0)|^2 =\big|\langle S, K(0, \cdot)\rangle_{\mc{H}} \big|^2 \leq \|S\|^2_{\mc{H}} \,\|K(0, \cdot)\|^2_{\mc{H}} =  \|S\|^2_{\mc{H}} \,K(0,0).
\end{align*}
Therefore, it follows that
$$\int_{-\infty}^{\infty} R(x) \left\{ 1 - \left(\frac{\sin \pi x}{\pi x}\right)^2\right\} \,\dx  = \int_{-\infty}^{\infty} |S(x)|^2 \left\{ 1 - \left(\frac{\sin \pi x}{\pi x}\right)^2\right\} \,\dx = \|S\|^2_{\mc{H}} \geq \frac{1}{K(0,0)},$$
and equality holds if and only if $S(z) = c \,K(0, z)$, where $c$ is a complex constant of absolute value $K(0,0)^{-1}$. Using the explicit representation for $K$ given by Theorem \ref{Intro_HS_Thm1_RP} we get 
\begin{align*}
R(z) &= S(z)\,\ov{S(\ov{z})}= \frac{1}{K(0,0)^2} K(0, z)^2 \\
& =  \frac{1}{K(0,0)^2}\left( \frac{1}{\big(\cos\big(2^{-\frac12}\big) - 2^{-\frac12} \sin\big(2^{-\frac12}\big)\big)}  \frac{2^{\frac12}\sin\big(2^{-\frac12}\big) \cos(\pi z) - 2\pi z \cos\big(2^{-\frac12}\big) \sin(\pi z)}{(1- 2\pi^2 z^2)}\right)^2\\
& = \left( \frac{1}{2^{\frac12}\sin\big(2^{-\frac12}\big)} \frac{2^{\frac12}\sin\big(2^{-\frac12}\big) \cos(\pi z) - 2\pi z \cos\big(2^{-\frac12}\big) \sin(\pi z)}{(1- 2\pi^2 z^2)}\right)^2\\
& = \frac{1}{(1- 2\pi^2 z^2)^2}\left( \cos (\pi z) - 2^{\frac12}\pi z \cot \big(2^{-\frac12}\big) \sin(\pi z) \right)^2.
\end{align*}
\end{proof}
\noindent {\sc Remark:} It follows from \eqref{Mont_formula}, \eqref{def-of-M}, and \eqref{eq_Mont_Taylor} that 
$$N^*(T) \leq \left(2^{-\frac12} \,\cot\big(2^{-\frac12}\big) + \frac{1}{2} + o(1) \right) N(T).$$
This inequality was previously proved by Montgomery and Taylor \cite{M2}, and can be used in the place of \eqref{Intro_4/3_bound} to give a slightly sharper version of our Corollary \ref{Mont}.


\section{Interpolation and orthogonality in de Branges spaces}\label{Sec_de_Branges_spaces}

In this section we prove Theorem \ref{Intro_thm5_super}. \new{Recall that $E$ is a Hermite-Biehler function that satisfies properties (P1) - (P4)}, and we assume without loss of generality that $E(0)>0$.

\subsection{Preliminary lemmas} We start by proving the following result.
\begin{lemma}\label{Sec15_Lem15} Let $\beta \notin \{a_k\} \cup \{b_k\}$ and consider the Hermite-Biehler function $E_{\beta}$ defined in \eqref{Intro_Def_E_beta_2}.
\begin{enumerate}
\item[(i)] The function $E_{\beta}$ satisfies properties {\rm (P1) - (P4)}.
\smallskip
\item[(ii)] If $a_k < \beta < b_{k}$, for $k \geq 1$, then $B_{\beta}(0) = B_{\beta}(\beta) = 0$.
\smallskip
\item[(iii)] If $b_k < \beta < a_{k+1}$, for $k \geq 0$, then $A_{\beta}(\beta) =0$. If $k \geq 1$, then there exists $\xi \in (0,\beta)$ such that $A_{\beta}(\xi) = 0$. 
\end{enumerate}
\end{lemma}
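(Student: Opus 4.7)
The plan is to expand $E_\beta(z) = E(z)(\gamma_\beta - iz)$ and read off its companion functions through $E_\beta = A_\beta - iB_\beta$; a direct computation yields
\begin{equation*}
A_\beta(z) = \gamma_\beta A(z) - zB(z), \qquad B_\beta(z) = zA(z) + \gamma_\beta B(z),
\end{equation*}
so that $A_\beta$ is even and $B_\beta$ is odd (matching the symmetries of $A$ and $B$). With these formulas in hand, parts (ii) and the first assertion of (iii) reduce to substitution: in (ii) the defining relation $\gamma_\beta = -\beta A(\beta)/B(\beta)$ yields $B_\beta(\beta) = 0$, and $B_\beta(0) = 0$ follows from $B(0) = 0$; in (iii), $\gamma_\beta = \beta B(\beta)/A(\beta)$ gives $A_\beta(\beta) = 0$.

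For the existence of an interior zero in case (iii) when $k \geq 1$, I would invoke the intermediate value theorem. At $z = 0$ one has $A_\beta(0) = \gamma_\beta A(0) > 0$. At $z = b_1 \in (0,\beta)$ the vanishing $B(b_1)=0$ gives $A_\beta(b_1) = \gamma_\beta A(b_1)$, and the interlacing $0 < a_1 < b_1 < a_2$ together with $A(0)>0$ forces $A(b_1) < 0$; hence $A_\beta$ changes sign on $(0, b_1) \subset (0,\beta)$, producing the desired $\xi$.

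For part (i), properties (P1)--(P3) follow directly from the factorization. The polynomial $\gamma_\beta - iz$ is of bounded type in $\C^+$, so $E_\beta$ inherits bounded type from $E$. The Hermite--Biehler inequality lifts because for $z = x+iy$ with $y>0$ one has $|\gamma_\beta - iz|^2 - |\gamma_\beta - i\bar z|^2 = 4\gamma_\beta y > 0$. Property (P2) holds since neither $E$ nor $\gamma_\beta - ix$ vanishes on $\R$, and (P3) follows from $E_\beta(iz) = E(iz)(\gamma_\beta + z)$, a product of two real entire functions.

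The subtle point is (P4); the main obstacle is ruling out cancellation between the two summands of $A_\beta = \gamma_\beta A - zB$ when both are viewed in $L^2(\R, |E_\beta|^{-2}\dx)$. Writing $|E_\beta(x)|^{-2} = (\gamma_\beta^2+x^2)^{-1} |E(x)|^{-2}$ and using $x^2/(\gamma_\beta^2+x^2) = 1 - \gamma_\beta^2/(\gamma_\beta^2+x^2)$, one obtains
\begin{equation*}
\int_{\R} (xB(x))^2 \, |E_\beta(x)|^{-2}\, \dx \,=\, \int_{\R} B(x)^2 \, |E(x)|^{-2}\, \dx \,-\, \gamma_\beta^2 \int_{\R} \frac{B(x)^2\,\dx}{(\gamma_\beta^2 + x^2)\,|E(x)|^2}.
\end{equation*}
The subtracted integral is bounded by $\pi\gamma_\beta$ via $B(x)^2 \leq |E(x)|^2$, while the first equals $+\infty$ since $B \notin \H(E)$; hence $zB \notin L^2(\R, |E_\beta|^{-2}\dx)$. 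The same bound shows $\gamma_\beta A \in L^2(\R, |E_\beta|^{-2}\dx)$, so $A_\beta = \gamma_\beta A - zB$ cannot lie in $L^2(\R, |E_\beta|^{-2}\dx)$ (otherwise $zB$ would as well), giving $A_\beta \notin \H(E_\beta)$. The symmetric argument applied to $B_\beta = zA + \gamma_\beta B$ yields $B_\beta \notin \H(E_\beta)$, completing (P4).
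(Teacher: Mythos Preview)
Your proof is correct and follows essentially the same strategy as the paper: you derive the same formulas $A_\beta = \gamma_\beta A - zB$, $B_\beta = zA + \gamma_\beta B$, reduce (P4) to the hypothesis $A,B \notin \H(E)$, and use the intermediate value theorem for the interior zero in (iii). The only differences are cosmetic: for (P4) the paper writes the asymptotic $A_\beta(x)/E_\beta(x) = -iB(x)/E(x) + O(x^{-1})$ rather than your explicit integral splitting, and for the interior zero in (iii) the paper locates $\xi$ in $(0,a_1)$ by studying the range of $x \mapsto \gamma_\beta A(x)/B(x)$ rather than evaluating $A_\beta$ at $b_1$; both executions are equally valid.
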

\begin{proof}[Proof of \textup{(i)}] Properties (P1), (P2) and (P3) are clear. A direct computation shows that
\begin{equation}\label{Sec5_Def_A_beta}
A_{\beta}(z) = \gamma_{\beta}A(z) - z B(z) 
\end{equation}
and
\begin{equation}\label{Sec5_Def_B_beta}
B_{\beta}(z) =  z A(z) + \gamma_{\beta}B(z).
\end{equation}
Suppose $A_{\beta} \in \mc{H}(E_{\beta})$. Then $A_{\beta}(x)\,|E_{\beta}(x)|^{-1} \in L^2(\R)$. Observe that
\begin{equation*}
\frac{A_{\beta}(x)}{E_{\beta}(x)} =  \gamma_{\beta} \frac{A(x)}{ (\gamma_{\beta}- ix)E(x)} - \frac{x}{(\gamma_{\beta} - ix)}\frac{B(x)}{E(x)}  = -i \frac{B(x)}{E(x)} + O(x^{-1}),
\end{equation*}
for large $x$. This would imply that $B \in \H(E)$, a contradiction. In an analogous manner, we show that $B_{\beta} \notin \mc{H}(E_{\beta})$. This establishes (P4).
\end{proof}

\begin{proof}[Proof of \textup{(ii)}] Since $E_{\beta}$ satisfies (P3), the function $B_{\beta}$ is odd and thus $B_{\beta}(0) = 0$. The fact that $B_{\beta}(\beta) = 0$ follows from \eqref{Sec5_Def_B_beta} and the definition of $\gamma_{\beta}$. 
\end{proof}

\begin{proof}[Proof of \textup{(iii)}] The fact that $A_{\beta}(\beta) = 0$ follows from \eqref{Sec5_Def_A_beta} and the definition of $\gamma_{\beta}$. Also, from \eqref{Sec5_Def_A_beta}, a number $\xi$ is a zero of $A_{\beta}$ if and only if
\begin{equation}\label{Sec5_def_xi_sol}
\xi = \gamma_{\beta} \frac{A(\xi)}{B(\xi)}.
\end{equation}
Since $B(x) >0$ in $(0,a_1]$ with $B(0) = 0$, and $A(x) >0$ in $[0, a_1)$ with $A(a_1) =0$, the function $x \mapsto A(x)/B(x)$ assumes every positive real value in the interval $(0,a_1)$. In particular, there exists $\xi \in (0,a_1)$ satisfying \eqref{Sec5_def_xi_sol}.
\end{proof}

The importance of condition (P4) lies in the fact that the sets $\{K(\xi, \cdot); \ A(\xi) = 0\}$ and $\{K(\xi, \cdot); \ B(\xi) = 0\}$ are orthogonal bases for $\H(E)$ (see \cite[Theorem 22]{B}). Using this fact, we establish four suitable quadrature formulas below. These are the key elements to prove the optimality of our approximations.

\begin{lemma}\label{Sec5_Lem16}
Let $F$ be an entire function of exponential type at most $2\tau(E)$ such that $F(x) \geq 0$ for all $x \in \R$ and 
\begin{equation}\label{Sec5_cond_M_E_finite}
M_E(F) = \int_{-\infty}^{\infty} F(x) \, |E(x)|^{-2}\,\dx < \infty.
\end{equation}
\begin{enumerate}
\item[(i)] We have 
\begin{equation}\label{Sec5_Lem16_1}
M_E(F)  = \sum_{A(\xi) = 0} \frac{F(\xi)}{K(\xi, \xi)} = \sum_{B(\xi) = 0} \frac{F(\xi)}{K(\xi, \xi)}.
\end{equation}
\item[(ii)] If $\beta \notin \{a_k\} \cup \{b_k\}$, then we have
\begin{equation}\label{Sec5_Lem16_2}
M_E(F)  = \sum_{A_{\beta}(\xi) = 0} F(\xi) \frac{\xi^2 + \gamma_{\beta}^2}{K_{\beta}(\xi, \xi)} = \sum_{B_{\beta}(\xi) = 0} F(\xi) \frac{\xi^2 + \gamma_{\beta}^2}{K_{\beta}(\xi, \xi)}.
\end{equation}
\end{enumerate}
\end{lemma}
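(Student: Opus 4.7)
The plan is to combine Krein's factorization of nonnegative entire functions with the orthonormal-basis expansion in reproducing kernel de Branges spaces provided by property (P4). Since $F \geq 0$ on $\R$ is entire of exponential type at most $2\tau(E)$ with $M_E(F)<\infty$, a standard application of Krein's theorem (see \cite[p.\,154]{A} or \cite[Lemma 14]{CL2}) produces a factorization $F(z) = U(z)\,U^*(z)$, where $U$ is entire of exponential type at most $\tau(E)$, belongs to $\H(E)$, and satisfies $\|U\|_E^2 = M_E(F)$. Note that $F(\xi) = |U(\xi)|^2$ for $\xi \in \R$.

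For part (i), property (P4) together with \cite[Theorem 22]{B} ensures that
\begin{equation*}
\left\{K(\xi, \cdot)\,K(\xi,\xi)^{-1/2} \,:\, A(\xi)=0\right\} \quad\text{and}\quad \left\{K(\xi, \cdot)\,K(\xi,\xi)^{-1/2} \,:\, B(\xi)=0\right\}
\end{equation*}
are orthonormal bases of $\H(E)$. Expanding $U$ in either basis and using the reproducing property $\langle U, K(\xi,\cdot)\rangle_E = U(\xi)$, Parseval's identity yields
\begin{equation*}
M_E(F) = \|U\|_E^2 = \sum_{A(\xi)=0} \frac{|U(\xi)|^2}{K(\xi,\xi)} = \sum_{A(\xi)=0} \frac{F(\xi)}{K(\xi,\xi)},
\end{equation*}
and analogously for the sum over zeros of $B$.

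For part (ii), the idea is to transfer the problem to the auxiliary de Branges space $\H(E_\beta)$, where $E_\beta(z) = E(z)(\gamma_\beta - iz)$ satisfies (P1)--(P4) by Lemma \ref{Sec15_Lem15}(i). I would set $V(z) := (\gamma_\beta - iz)\,U(z)$, which is entire of exponential type at most $\tau(E) = \tau(E_\beta)$. Since $|E_\beta(x)|^2 = (x^2+\gamma_\beta^2)|E(x)|^2$, we have the identity $V/E_\beta = U/E$, so $V/E_\beta$ and $V^*/E_\beta$ inherit bounded type and nonpositive mean type in $\C^+$ from $U/E$ and $U^*/E$; moreover,
\begin{equation*}
\|V\|_{E_\beta}^2 = \int_{-\infty}^{\infty} \frac{(x^2+\gamma_\beta^2)|U(x)|^2}{(x^2+\gamma_\beta^2)|E(x)|^2}\,\dx = M_E(F) < \infty,
\end{equation*}
so $V \in \H(E_\beta)$. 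Applying Parseval to $V$ in the two orthonormal bases of $\H(E_\beta)$ coming from the zeros of $A_\beta$ and $B_\beta$ (available via Lemma \ref{Sec15_Lem15}(i) and \cite[Theorem 22]{B}), and noting that $|V(\xi)|^2 = (\xi^2+\gamma_\beta^2)F(\xi)$ for $\xi \in \R$, delivers both identities in \eqref{Sec5_Lem16_2}.

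The main obstacle is technical rather than conceptual: one must verify that the Krein factor $U$ genuinely lies in $\H(E)$ --- that is, that $U/E$ and $U^*/E$ have bounded type and nonpositive mean type in $\C^+$ --- and similarly check that multiplication by the polynomial $(\gamma_\beta - iz)$ preserves these properties when passing to $\H(E_\beta)$. Both are standard consequences of Krein's theorem together with the bounded type of $E$ and $E_\beta$ guaranteed by (P1), and are essentially packaged in \cite[Lemma 14]{CL2}. Once these verifications are in place, the rest is an unwinding of the reproducing kernel identity and Parseval's theorem.
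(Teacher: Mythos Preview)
Your proof is correct and follows essentially the same route as the paper: Krein's factorization $F=UU^*$ with $U\in\H(E)$, followed by Parseval in the orthonormal bases supplied by \cite[Theorem 22]{B}. For part (ii) the paper multiplies $F$ by $(z^2+\gamma_\beta^2)$ and re-invokes Krein's decomposition in $\H(E_\beta)$, whereas you construct the factor explicitly as $V(z)=(\gamma_\beta-iz)U(z)$; since $V^*/E_\beta$ differs from $U^*/E$ only by the Blaschke factor $(\gamma_\beta+iz)/(\gamma_\beta-iz)$, which is bounded by $1$ in $\C^+$, your membership check for $V\in\H(E_\beta)$ goes through and the two arguments are equivalent.
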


\begin{proof}[Proof of \textup{(i)}] By \cite[Lemma 14]{CL2} (this is the corresponding version of Krein's decomposition \cite[p. 154]{A} for the de Branges space $\H(E)$) we can write $F(z) = U(z)U^*(z)$ where $U \in \H(E)$. Part (i) then follows from the orthogonal basis given by \cite[Theorem 22]{B}
\begin{align*}
M_E(F) = \int_{-\infty}^{\infty} |U(x)|^2 \, |E(x)|^{-2}\,\dx = \sum_{A(\xi) = 0} \frac{|U(\xi)|^2}{K(\xi, \xi)} = \sum_{A(\xi) = 0} \frac{F(\xi)}{K(\xi, \xi)}.
\end{align*}
A similar representation holds at the zeros of $B$.
\end{proof}
\begin{proof}[Proof of \textup{(ii)}] We now consider $F_{\beta}(z):= F(z)(z^2 + \gamma_{\beta}^2)$. This is also an entire function of exponential type at most $2\tau(E)$ which is nonnegative on the real axis. Since $|E_{\beta}(x)|^2 = |E(x)|^2 (x^2 + \gamma_{\beta}^2)$ we see from \eqref{Sec5_cond_M_E_finite} that $M_E(F) = M_{E_{\beta}}(F_{\beta}) < \infty$. We then write $F_{\beta}(z) = U_{\beta}(z)U^*_{\beta}(z)$ with $U_{\beta} \in \H(E_{\beta})$ and thus, by \cite[Theorem 22]{B}, we have
\begin{align*}
M_E(F) = M_{E_{\beta}}(F_{\beta}) = \int_{-\infty}^{\infty} |U_{\beta}(x)|^2 \, |E_{\beta}(x)|^{-2}\,\dx = \sum_{A_{\beta}(\xi) = 0} \frac{|U_{\beta}(\xi)|^2}{K_{\beta}(\xi, \xi)} = \sum_{A_{\beta}(\xi) = 0} F(\xi) \frac{\xi^2 + \gamma_{\beta}^2}{K_{\beta}(\xi, \xi)}.
\end{align*}
A similar representation holds at the zeros of $B_{\beta}$. This completes the proof of the lemma.
\end{proof}

\subsection{Proof of Theorem \ref{Intro_thm5_super}} This proof is divided in three distinct qualitative regimes.

\subsubsection{Case 1: Assume $\beta \notin \{a_k\} \cup \{b_k\} \cup (0,a_1)$} Since $E_{\beta}$ is a Hermite-Biehler function of bounded type (property (P1)), the functions $A_{\beta}$ and $B_{\beta}$ belong to the Laguerre-P\'{o}lya class (this follows from \cite[Problem 34]{B} or \cite[Lemma 13]{CL2}), i.e. they are uniform limits (in compact sets) of polynomials with only real zeros. Moreover, from property (P3) we have that $A_{\beta}^2$ and $B_{\beta}^2$ are even functions.

\smallskip

We consider now the case $b_k < \beta < a_{k+1}$ ($k \geq 1$), in which the nodes of interpolation are the zeros of $A_{\beta}$ (the proof in the case $a_k < \beta < b_{k}$ proceeds along similar lines, using the zeros of $B_{\beta}$ as interpolation nodes). Since $A_{\beta}^2$ is an even Laguerre-P\'{o}lya function with $A_{\beta}^2(\beta) = 0$ and at least two other zeros, counted with multiplicity, lie in the interval $[0,\beta)$ (by Lemma \ref{Sec15_Lem15} (iii)), the hypotheses of \cite[Theorem 3.14]{Lit} are fulfilled. Hence there exists a pair of real entire functions $R_{\beta,E}^{\pm}$ such that 
\begin{equation*}
R_{\beta,E}^-(x) \leq  \chi_{[-\beta, \beta]}(x) \leq R_{\beta,E}^+(x) 
\end{equation*}
for all $x \in \R$, with 
\begin{equation}\label{Sec5_int_1}
R_{\beta,E}^{\pm}(\xi) = \chi_{[-\beta, \beta]}(\xi)
\end{equation}
for all $\xi \neq \pm\beta$ such that $A_{\beta}(\xi) =0$, and 
\begin{equation}\label{Sec5_int_2}
R_{\beta,E}^{+}(\pm\beta)=1  \quad \text{and} \quad  R_{\beta,E}^{-}(\pm\beta)=0.
\end{equation}
Moreover, the functions $R_{\beta,E}^{\pm}$ satisfy the estimate
\begin{equation*}
\big|R_{\beta,E}^{\pm}(z)\big| \ll \frac{|A_{\beta}^2(z)|}{1 + |\re (z)|^4}
\end{equation*}
for all $z \in \C$. This shows, in particular, that the functions $R_{\beta,E}^{\pm}$ have exponential type at most $2\tau(E)$ and that $M_E(R_{\beta,E}^{\pm}) < \infty$.

\smallskip

We show next that these functions are extremal. First we consider the case of the majorant. Let $R_{\beta}^+$ be an entire function of exponential type at most $2\tau(E)$ such that 
\begin{equation*}
R_{\beta}^+(x) \geq  \chi_{[-\beta, \beta]}(x) 
\end{equation*}
for all $x \in \R$. From \eqref{Sec5_Lem16_2} we obtain
\begin{align*}
M_E(R_{\beta}^+) =  \sum_{A_{\beta}(\xi) = 0} R_{\beta}^+(\xi) \frac{\xi^2 + \gamma_{\beta}^2}{K_{\beta}(\xi, \xi)} \geq  \sum_{\stackrel{A_{\beta}(\xi) = 0}{|\xi| \leq \beta}} \frac{\xi^2 + \gamma_{\beta}^2}{K_{\beta}(\xi, \xi)},
\end{align*}
and, by \eqref{Sec5_int_1} and \eqref{Sec5_int_2}, we have equality if $R_{\beta}^+ = R_{\beta,E}^+$. The minorant case follows analogously by writing $R_{\beta}^-$ as a difference of two nonnegative functions, e.g. writing $R_{\beta}^- = R_{\beta,E}^+ - (R_{\beta,E}^+ - R_{\beta}^-)$, and applying \eqref{Sec5_Lem16_2} to each nonnegative function separately.

\subsubsection{Case 2: Assume $\beta \in \{a_k\} \cup \{b_k\}$} This case follows from the work of Holt and Vaaler \cite[Theorem 15]{HV}. In this paper they construct extremal majorants and minorants for $\sgn(x - \beta)$ that interpolate this function at the zeros of $A$ (if $\beta \in \{a_k\}$) or $B$ (if $\beta \in \{b_k\}$). If we use that
 \begin{equation*}
 \tfrac{1}{2} \big\{ \sgn (x + \beta) + \sgn(-x + \beta)\big\} = \chi_{[-\beta, \beta]}(x),
 \end{equation*}
the Holt-Vaaler construction gives us majorants and minorants for $\chi_{[-\beta, \beta]}$ that interpolate this function at the right nodes (the zeros of $A$ or $B$). The optimality now follows from \eqref{Sec5_Lem16_1} as in the previous case. We note that this is the analogous of the Beurling-Selberg construction for the Paley-Wiener case.

\subsubsection{Case 3: Assume $\beta \in (0,a_1)$} \new{A special case of this result was shown in \cite[Lemma 10]{DL} using an explicit Fourier expansion and the Cauchy-Schwarz inequality. We prove first that the zero function is an optimal minorant.} We start by noticing, from \eqref{Sec5_Def_B_beta}, that the smallest positive zero of $B_\beta$ is greater than $a_1$, since both $A$ and $B$ are positive in $(0,a_1)$. Since the zeros of $A_\beta$ and $B_\beta$ are simple and interlace, and since $A_\beta(\beta)=0$, we conclude that $\beta$ is the only zero of $A_\beta$ in the interval $(0,a_1)$. Since the zero function interpolates $\chi_{[-\beta, \beta]}$ at all the zeros of $A$ (or $A_{\beta}$), it must be an extremal function from the quadrature formula \eqref{Sec5_Lem16_1} (or \eqref{Sec5_Lem16_2}). 

\smallskip

\new{To find an extremal majorant, define the entire function $Q_\beta$ by}
\begin{equation}
Q_{\beta}(z) = C_{\beta}\frac{A_{\beta}(z)}{(\beta^2 - z^2)},
\end{equation}
where $C_{\beta}$ is a constant chosen so that $Q_{\beta}(\pm\beta) = 1$. Note that $Q_{\beta}$ is an even Laguerre-P\'{o}lya function, with no zeros in $[-\beta, \beta]$.  \new{We claim that $Q_{\beta}$ is monotone in the intervals $[-\beta,0]$ and $[0, \beta]$.} To see this, let $\pm x_{\beta}$ be the smallest zeros (in absolute value) of $Q_{\beta}$ (recall that these zeros are simple). We can then regard $Q_{\beta}$ as a uniform limit in $[-x_{\beta}, x_{\beta}]$ of even polynomials $P_k$ with only real and simple zeros. We can choose the smallest zeros (in absolute value) of $P_k$ to be $\pm x_{\beta}$. Therefore, $P_k'$ has only one simple zero in $[-x_{\beta}, x_{\beta}]$, which must be at the origin since $P_k$ is even. Moreover, by Rolle's theorem all zeros of $P_k'$ are real (and simple).  Since $P_k'(x) \to Q_{\beta}'(x)$ uniformly in $[-x_{\beta}, x_{\beta}]$, the odd function $Q_{\beta}'$ has only one zero in the interval $[-x_{\beta}, x_{\beta}]$, which must be at the origin. \new{This implies that $|Q_{\beta}|$ is monotone increasing in $[-x_{\beta},0]$ and monotone decreasing in $[0, x_{\beta}]$.  In particular, we have}
$$R_{\beta,E}^+(x):=Q_{\beta}^2(x) \geq \chi_{[-\beta, \beta]}(x)$$
for all $x \in \R$, and this is our desired majorant of exponential type at most $2 \tau(E)$. The optimality now follows from \eqref{Sec5_Lem16_2} since $R_{\beta,E}^+ =Q_{\beta}^2$ interpolates $\chi_{[-\beta, \beta]}$ at the zeros of $A_{\beta}$. 

\subsubsection{Uniqueness and relation to the two-delta problem} We have constructed above extremal functions $ R_{\beta,E}^{\pm}$ that interpolate $\chi_{[-\beta, \beta]}$ at the nodes of a certain quadrature (either $A$, $B$, $A_{\beta}$ or $B_{\beta}$). The quadrature is chosen in such a way to have $\pm \beta$ as nodes of interpolation. At these points we have $R_{\beta,E}^+(\pm \beta) = 1$ and $R_{\beta,E}^-(\pm \beta) = 0$. Therefore, the difference $R := R_{\beta,E}^+ - R_{\beta,E}^-$ is a majorant of the two-delta function $\chi_{\{\pm\beta\}}$ that interpolates $\chi_{\{\pm\beta\}}$ at the nodes of the same quadrature. From Lemma \ref{Sec5_Lem16} this difference must be an extremal function for the two-delta problem \eqref{two-Delta_E}. In particular we obtain
\begin{equation*}
\varDelta_E(\beta)  = \varLambda_E^+(\beta) - \varLambda_E^-(\beta).
\end{equation*} 
From property (P3) we have that $A$ is even and $B$ is odd. Thus, for $\beta >0$ we have
\begin{equation}\label{thm5_condition_K_beta}
K(\beta, -\beta) = \frac{B(-\beta)A(\beta)- A(-\beta)B(\beta)}{-2\pi \beta} = \frac{A(\beta)B(\beta)}{\pi \beta}.
\end{equation}
In the generic cases (iii) and (iv) we have $\beta \notin \{a_k\} \cup \{b_k\}$, and \eqref{thm5_condition_K_beta} implies that $K(\beta, -\beta) \neq 0$. In this situation, from Theorem \ref{Intro_HS_Thm5}, the extremal solution of the two-delta problem is unique, and therefore the pair of extremal functions $ R_{\beta,E}^{\pm}$ must also be unique. This concludes the proof.


\section{Small gaps between the zeros of $\zeta(s)$} \label{sg}

\smallskip

The goal of this section is to prove Theorem \ref{small gaps}. Our proof relies on the following estimate for Montgomery's function $F(\alpha) = F(\alpha, T)$ defined in \eqref{Mont_function_sec2}.

\begin{lemma} \label{Goldston ineq}
Assume RH and let $A>1$ be fixed. Then, as $T \to \infty$, we have
\[
\int_1^\xi \big( \xi-\alpha \big) \, F(\alpha) \, \dalpha \ \ge \ \frac{\xi^2}{2} - \xi + \frac{1}{3} + o(1)
\]
uniformly for $1\le \xi \le A$.
\end{lemma}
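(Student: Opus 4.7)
The plan is to apply Montgomery's convolution formula \eqref{convolution} to the Fej\'er-type test function
\[
R_\xi(x) = \xi^2 \bigg(\frac{\sin \pi \xi x}{\pi \xi x}\bigg)^2,
\]
whose Fourier transform is $\widehat{R_\xi}(\alpha) = \max(\xi - |\alpha|, 0)$, supported on $[-\xi, \xi]$. Since $R_\xi(x) \ge 0$ on $\R$, $w(u) \ge 0$, and $R_\xi(0) = \xi^2$, the diagonal terms $\gamma = \gamma'$ contribute $\xi^2 N^*(T) \ge \xi^2 N(T)$ to the left-hand side of \eqref{convolution}, while the off-diagonal contribution is nonnegative and can be discarded. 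Combining this with the asymptotic \eqref{N} and the evenness of $F$, I would deduce the one-sided bound
\[
\frac{\xi^2}{2} + o(1) \;\leq\; \int_0^\xi (\xi - \alpha) F(\alpha) \, \dalpha,
\]
uniformly for $\xi$ in the compact range $[1, A]$.

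The next step is to evaluate the corresponding integral on the short interval $[0,1]$ using the Goldston--Montgomery asymptotic \eqref{F alpha}. Substituting $F(\alpha) = (T^{-2\alpha} \log T + \alpha)\bigl(1 + O(\sqrt{\log\log T/\log T})\bigr)$ and applying the change of variables $v = 2\alpha \log T$ to the exponentially-decaying piece gives
\[
\int_0^1 (\xi - \alpha) T^{-2\alpha} \log T \, \dalpha = \frac{\xi}{2} + o(1),
\]
while direct calculation yields $\int_0^1 (\xi - \alpha)\alpha \, \dalpha = \xi/2 - 1/3$. Adding these produces the uniform asymptotic
\[
\int_0^1 (\xi - \alpha) F(\alpha) \, \dalpha \;=\; \xi - \frac{1}{3} + o(1).
\]

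Subtracting this asymptotic from the lower bound obtained in the first step gives precisely
\[
\int_1^\xi (\xi - \alpha) F(\alpha) \, \dalpha \;\geq\; \frac{\xi^2}{2} - \xi + \frac{1}{3} + o(1),
\]
as required. The argument is essentially the standard Fej\'er-kernel positivity technique going back to Montgomery \cite{M1}, combined with the explicit asymptotic for $F(\alpha)$ on $[-1,1]$; no new ideas are needed. The only point that requires a small amount of care is verifying that all error terms remain $o(1)$ \emph{uniformly} for $\xi \in [1, A]$, but this is immediate because the implicit constants in \eqref{N} and \eqref{F alpha} are absolute and $\xi$ is bounded by $A$.
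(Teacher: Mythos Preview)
Your proof is correct and takes essentially the same approach as the paper's: both apply the convolution formula \eqref{convolution} to the Fej\'er kernel, bound the zero sum below by its diagonal contribution $N^*(T)\ge N(T)$, and evaluate the portion of the integral over $[0,1]$ via \eqref{F alpha}. The only difference is cosmetic---you normalize $R_\xi$ so that $\widehat{R_\xi}(\alpha)=\max(\xi-|\alpha|,0)$, whereas the paper takes $R_\xi(0)=1$ and carries the factor $1/\xi^2$ through the computation.
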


\begin{proof}
This inequality is implicit in the work of Goldston \cite[Section 7]{Gold}, but we sketch a proof for completeness. We use \eqref{convolution}, \eqref{F alpha}, and the Fourier transform pair 
\[ R_\xi(x) = \left( \frac{\sin \pi \xi x}{\pi \xi x} \right)^{\!2} \quad \text{and} \quad \widehat{R}_\xi(\alpha) = \frac{1}{\xi^2} \max\big( \xi\!-\!|\alpha|,0 \big). \]
Observe that
\begin{equation} \label{step1}
\begin{split}
1+o(1) &\le \frac{2\pi}{T\log T} \, N^*(T)
\\
& \le \frac{2 \pi}{T \log T} \sum_{0<\gamma,\gamma' \le T} R_\xi\!\left( (\gamma'\!-\!\gamma) \frac{\log T}{2\pi} \right) w(\gamma'\!-\!\gamma)
\\
&= \int_{-\xi}^\xi \widehat{R}_\xi (\alpha) F(\alpha) \, \dalpha,
\end{split}
\end{equation}
where the last step follows from the convolution formula in \eqref{convolution}. Using the fact the the integrand is even and applying \eqref{F alpha}, it follows that
\begin{equation*}
\begin{split}
\int_{-\xi}^\xi \widehat{R}_\xi (\alpha) F(\alpha) \, \dalpha & = \frac{1}{\xi} + \frac{2}{\xi^2}  \int_0^1 \big( \xi\!-\!\alpha \big)\alpha \, \dalpha + \frac{2}{\xi^2} \int_1^\xi \big( \xi\!-\!\alpha \big)  F(\alpha) \, \dalpha + o(1)
\\
& = \frac{2}{\xi} - \frac{2}{3\xi^2} + \frac{2}{\xi^2} \int_1^\xi \big( \xi\!-\!\alpha \big)  F(\alpha) \, \dalpha + o(1)
\end{split}
\end{equation*}
uniformly for $1\le \xi \le A$. Inserting this estimate into \eqref{step1} and rearranging terms, the lemma follows.
\end{proof}


\subsection{Proof of Theorem \ref{small gaps}}

We modify an argument of Goldston, Gonek, \"{O}zl\"{u}k and Snyder in \cite{GGOS} which relied on the Fourier transform pair
\[
G(x) = \left( \frac{\sin \pi x}{\pi x} \right)^2 \left(\frac{1}{1\!-\!x^2} \right)
\]
and
\[
\widehat{G}(\alpha) = \left\{
\begin{array}{cl}
1-|\alpha|+\frac{\sin 2 \pi|\alpha|}{2\pi}, & {\rm if} \ \  |\alpha|\le 1,\\
0, & { \rm if} \ \ |\alpha|>1.\\
\end{array}
\right.
\]
Note that $G(x)$ is a minorant for $\chi_{[-1,1]}$ with (nonnegative) Fourier transform supported in $[-1,1]$. Therefore, $G(x/\beta)$ is a minorant for $\chi_{[-\beta,\beta]}$, and it follows from \eqref{convolution} that
\begin{equation*}
\begin{split}
N^*(T) + 2N(T,\beta) \ &\ge \sum_{0<\gamma,\gamma' \le T} G\!\left( (\gamma'\!-\!\gamma) \frac{ \log T}{2\pi \beta} \right) w(\gamma'\!-\!\gamma)
\\
& = \left( \frac{T \log T}{2\pi} \right) \int_{-1/\beta}^{1/\beta} \beta \widehat{G}(\beta \alpha) F(\alpha) \, \dalpha.
\end{split}
\end{equation*}
Using \eqref{F alpha}, the assumption in \eqref{N star}, and the fact that the integrand is even we have
\begin{equation}\label{1th3}
N(T,\beta) \ge \left( \frac{1}{2} + o(1) \right) \frac{T \log T}{2\pi}  \left( \beta - 1 + 2\beta\int_0^1  \widehat{G}(\beta \alpha) \alpha \, \dalpha  +  2\beta\int_1^{1/\beta}  \widehat{G}(\beta \alpha) F(\alpha) \, \dalpha \right).
\end{equation}
Since $\widehat{G}(\alpha) \ge 0$ for all $\alpha$, Goldston, Gonek, \"{O}zl\"{u}k and Snyder observed that
\[
N(T,\beta) \ge \left( \frac{1}{2} + o(1) \right) \frac{T \log T}{2\pi} \left( \beta - 1 + 2\beta\int_0^1  \widehat{G}(\beta \alpha) \alpha \, \dalpha  \right),
\]
and then used a numerical calculation to show that $N(T,0.607286) \gg N(T)$ under the assumptions of Theorem \ref{small gaps}. In order to improve their result, we use Lemma \ref{Goldston ineq} to derive a lower bound for the second integral on the right-hand side of \eqref{1th3}.

\smallskip

Following Goldston \cite{Gold}, we define the function
\begin{equation*}\label{I beta}
I(\xi) = \int_1^\xi (\xi\!-\!\alpha) F(\alpha) \, \dalpha
\end{equation*}
and we observe that
\[
I'(\xi) = \int_1^\xi F(\alpha) \, \dalpha \quad \text{ and } \quad I''(\xi) = F(\xi).
\]
Note that Lemma \ref{Goldston ineq} provides a nontrivial lower bound for $I(\xi)$ as long as $\xi \ge 1 + 1/\sqrt{3}$. Integrating by parts twice, it follows that
\begin{equation}\label{2th3}
\begin{split}
\int_1^{1/\beta}  \widehat{G}(\beta \alpha) F(\alpha) \, \dalpha & = \int_1^{1/\beta}  \widehat{G}(\beta \alpha) I''(\alpha) \, \dalpha = \beta^2 \int_1^{1/\beta}  \widehat{G}''(\beta \alpha) I(\alpha) \, \dalpha.
\end{split}
\end{equation}
By definition, for $\alpha \ge 0$, we have $\widehat{G}''(\beta \alpha) = -2\pi \sin(2\pi \beta \alpha)$ which is non-negative for $1\le \alpha \le 1/\beta$ if $1/2 \le \beta \le 1$. Therefore \eqref{2th3} and Lemma \ref{Goldston ineq} imply that
\[
\int_1^{1/\beta}  \widehat{G}(\beta \alpha) F(\alpha) \, \dalpha \ge -2\pi \beta^2 \int_{1+1/\sqrt{3}}^{1/\beta}  \sin(2\pi \beta \alpha) \left(\frac{\alpha^2}{2} \!-\! \alpha \!+\! \frac{1}{3} \!+\! o(1) \right)   \dalpha
\]
for $1/2 \le \beta \le 1$. Inserting this estimate into \eqref{1th3}, it follows that
\[
N(T,\beta) \ge \left( \frac{1}{2} \!+\! o(1) \right) \frac{T \log T}{2\pi} \left( \beta \!-\! 1 \!+\! 2\beta\int_0^1  \widehat{G}(\beta \alpha) \alpha \, \dalpha  -  4\pi \beta^3\int_{1+1/\sqrt{3}}^{1/\beta}  \sin(2\pi \beta \alpha) \left(\frac{\alpha^2}{2}\! - \! \alpha \! + \! \frac{1}{3} \right)  \, \dalpha \right).
\] 
A straightforward numerical calculation shows that the right-hand side is positive if $\beta \ge 0.606894$.


\section{$q$-analogues of Theorem \ref{Intro_thm1_Gallagher} and Theorem \ref{Intro_thm2_Gallagher_2} }\label{Sec_Q_analogue}

As was suggested in Montgomery's original paper \cite{M1}, it is interesting to study the pair correlation of zeros of the family of Dirichlet $L$-functions in $q$-aspect. Montgomery had in mind improving the analogue of \eqref{F alpha} for this family of $L$-functions (see \cite{CLLR, Ozluk}), and so it is not surprising that the analogue of Theorems \ref{Intro_thm1_Gallagher} and \ref{Intro_thm2_Gallagher_2} can also be improved. In this section, we indicate such an improvement. In order to state this result, we need to introduce some notation. All sums over the zeros of Dirichlet $L$-functions are counted with multiplicity and $\varepsilon$ denotes an arbitrarily small positive constant that may vary from line to line.

\smallskip

Let $W$ be a smooth function, compactly supported in $(1,2)$. Let $\Phi$ be a function which is real and compactly supported in $(a,b)$ with $0< a< b$. As usual, define its Mellin transform $\hP$ by
$$ \hP(s) = \int_0^\infty \Phi(x)\,x^{s-1} \> \dx. $$
Suppose that $\Phi(x) = \Phi(x^{-1})$ for all $x \in \R\setminus\{0\}$  , $\hP(it) \geq 0$ for all $t \in \R$, and that $\hP(it) \ll |t|^{-2}$. For example, we may choose 
\[
\hP (s) = \left( \frac{e^{s} \!-\! e^{-s} }{2 s} \right)^2
\]
so that $ \hP ( i t)  = (\sin t/ t)^2 \geq 0$ and the function
\begin{align*}
\Phi(x) & =     \begin{cases}
    \frac12 - \frac14 \log x,  & \text{ for } 1 \leq x \leq e^2, \\
\frac12+ \frac14 \log x,   & \text{ for }  e^{-2} \leq x \leq 1, \\
0, & \text{ otherwise},
\end{cases}
\end{align*}
is real and compactly supported in $(a,b)$ for some $a,b > 0$. We define the $q$-analogue of $N(T, \beta)$ as 
\[
N_{\Phi}(Q, \beta) \, := \, \sum_{q} \frac{W(q/Q)}{\varphi(q)} {\sumstar_{\chi \,(\text{mod }{q})}} \left\{
\sum_{\substack{\gamma_{\chi}, \gamma'_{\chi} \\ 0 < \gamma_{\chi} - \gamma'_{\chi} \leq \frac{2\pi \beta}{\log Q}}} \!\!\!\!\!\!\!
\hP (i\gamma_{\chi})\hP (i\gamma'_{\chi}) \right\}.
\]
Here the superscript $\star$ indicates the sum is restricted to primitive characters $\chi \,(\text{mod }{q})$, and the inner sum on the right-hand side runs over two sets of nontrivial zeros of the Dirichlet $L$-function $L(s,\chi)$ with ordinates $\gamma_\chi$ and $\gamma'_\chi$, respectively. Similarly, we define the $q$-analogues of $N(T)$ and $N^*(T)$ by
\[
N_{\Phi}(Q) := \sum_{q} \frac{W(q/Q)}{\varphi(q)}{\sumstar_{\chi \,(\text{mod }{q})}} 
\sum_{\gamma_{\chi}}
|\hP (i\gamma_{\chi})|^2
\]
and 
\[
N_{\Phi}^*(Q) := \sum_{q} \frac{W(q/Q)}{\varphi(q)}{\sumstar_{\chi \,(\text{mod }{q})}} 
\sum_{\gamma_{\chi}}
|\hP (i\gamma_{\chi})|^2 \, m_{\gamma_\chi},
\]
respectively. Here the superscript $\star$ is as above, and $m_{\gamma_\chi}$ denotes the multiplicity of a zero of $L(s,\chi)$ with ordinate $\gamma_\chi$. Since it is generally believed that the zeros of primitive Dirichlet $L$-functions are all simple, we expect that $N^*_\Phi(Q)=N_\Phi(Q)$ for all $Q>0$. Moreover, analogous to \eqref{PCC2}, we expect that
\[
N_\Phi(Q,\beta) \sim N_{\Phi}(Q) \left\{ \beta-\frac{1}{2}+\frac{1}{2\pi^2 \beta} + O\Big(\frac{1}{\beta^2}\Big) \right\}
\]
as $\beta \to \infty$ sufficiently slowly (when $Q$ is large). In support of this, we prove the following stronger version of Theorems \ref{Intro_thm1_Gallagher} and \ref{Intro_thm2_Gallagher_2} for the zeros of primitive Dirichlet $L$-functions.

\begin{theorem} \label{q theorem} 
Assume the generalized Riemann hypothesis for Dirichlet $L$-functions, and let $\varepsilon>0$ be arbitrary. Then, for any $\beta>0$, 
we have
\[
\limsup_{Q\to\infty} \frac{N_\Phi(Q,\beta)}{N_\Phi(Q)} \le  \beta - \frac 14 + \varepsilon + \frac{1}{2 \pi^2 \beta} +  O\!\left(\frac{1}{\beta^2 }\right).
\]
If, in addition, $N_{\Phi}^*(Q) \sim N_{\Phi}(Q)$ as $Q \to \infty$, then we also have
\[
\liminf_{Q\to\infty} \frac{N_\Phi(Q,\beta)}{N_\Phi(Q)} \ge  \beta - \frac 34 -\varepsilon+ \frac{1}{2 \pi^2 \beta} +  O\!\left(\frac{1}{\beta^2 }\right).
\]
\end{theorem}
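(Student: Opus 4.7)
The plan is to mirror the proof of Theorem \ref{Gallagher} from \S\ref{Sec_BS_majorants}, substituting two ingredients from the $q$-aspect setting. The decisive feature is that the $q$-analogue of Montgomery's formula permits test functions $R$ with Fourier transform supported in $(-2,2)$ rather than $[-1,1]$, effectively doubling the admissible exponential type.

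First, I would invoke the $q$-aspect Montgomery formula from \cite{CLLR}: under GRH, for any admissible $R$ with $\supp \widehat{R}\subset(-2,2)$,
\[
\frac{1}{N_\Phi(Q)}\sum_{q}\frac{W(q/Q)}{\varphi(q)}\sideset{}{^\star}\sum_{\chi\,(\text{mod}\,q)}\sum_{\gamma_\chi,\gamma'_\chi}R\!\left((\gamma_\chi\!-\!\gamma'_\chi)\tfrac{\log Q}{2\pi}\right)\hP(i\gamma_\chi)\hP(i\gamma'_\chi)\longrightarrow R(0)+M(R)
\]
as $Q\to\infty$, with $M(R)$ as in \eqref{def-of-M}. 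The hypotheses imposed on $\Phi$ in the setup (real, compactly supported in $(a,b)\subset(0,\infty)$, invariant under $x\mapsto x^{-1}$, and $\hP(it)\ge 0$ with quadratic decay) are precisely those required for this formula.

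Given $\varepsilon>0$, I would fix $\Delta\in(1,2)$ with $\tfrac{1}{2\Delta}<\tfrac14+\varepsilon$ and apply the above to the rescaled Beurling-Selberg functions $s_{\Delta,\beta}^\pm(x)=r_{\Delta\beta}^\pm(\Delta x)$, which have exponential type $2\pi\Delta$ and Fourier support in $[-\Delta,\Delta]\subset(-2,2)$. Splitting the double sum into its diagonal ($\gamma_\chi=\gamma'_\chi$) and off-diagonal parts, the diagonal contributes $s_{\Delta,\beta}^\pm(0)\,N_\Phi^*(Q)$ (since zeros are counted with multiplicity and the weight $\hP(i\gamma_\chi)\hP(i\gamma'_\chi)$ combines with this multiplicity to reproduce $N_\Phi^*(Q)$), while the near-diagonal off-diagonal pairs contribute $2\,N_\Phi(Q,\beta)$ under the sandwich $s_{\Delta,\beta}^-\le\chi_{[-\beta,\beta]}\le s_{\Delta,\beta}^+$ (here using $\hP(i\gamma_\chi)\hP(i\gamma'_\chi)\ge 0$ and the fact that $s_{\Delta,\beta}^-\le 0$ outside $[-\beta,\beta]$). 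Rearranging exactly as in \eqref{Intro_char_beta_reduction}--\eqref{Intro_char_beta_reduction2}, using $N_\Phi^*(Q)\ge N_\Phi(Q)$ and $s_{\Delta,\beta}^+(0)\ge 1$ for the upper bound, and the hypothesis $N_\Phi^*(Q)\sim N_\Phi(Q)$ together with $s_{\Delta,\beta}^-(0)\le 1$ for the lower bound, yields
\[
\tfrac12 M(s_{\Delta,\beta}^-)\;\le\;\liminf_{Q\to\infty}\frac{N_\Phi(Q,\beta)}{N_\Phi(Q)}\;\le\;\limsup_{Q\to\infty}\frac{N_\Phi(Q,\beta)}{N_\Phi(Q)}\;\le\;\tfrac12 M(s_{\Delta,\beta}^+).
\]

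Finally, the asymptotic evaluation \eqref{Final_answer_M_s}, already derived in \S\ref{sec:deduceThm1from2} for arbitrary $\Delta\ge 1$, gives
$\tfrac12 M(s_{\Delta,\beta}^\pm)=\beta-\tfrac12\pm\tfrac{1}{2\Delta}+\tfrac{1}{2\pi^2\beta}+O(\beta^{-2})$, and our choice of $\Delta$ absorbs the discrepancy $\tfrac{1}{2\Delta}-\tfrac14$ into $\varepsilon$, yielding the two claimed inequalities. The main obstacle is the black-box invocation of \cite{CLLR}: one must confirm that the formula applies uniformly for $\widehat{R}$ supported arbitrarily close to the full open interval $(-2,2)$ and that the separable weight $\hP(i\gamma_\chi)\hP(i\gamma'_\chi)$ appears in precisely the form stated above. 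Modulo this, the proof is a direct transcription of the rescaling argument of \S\ref{sec:deduceThm1from2}.
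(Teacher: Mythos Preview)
Your proposal is correct and follows essentially the same route as the paper: the paper uses the convolution identity together with Lemma~\ref{thm:1} (the CLLR evaluation of $F_\Phi(\alpha)$ for $|\alpha|\le 2-\varepsilon$) in place of your packaged ``$q$-aspect Montgomery formula,'' applies it to $s_{\Delta,\beta}^\pm$ with $\Delta=2-\varepsilon$, and then invokes \eqref{Final_answer_M_s}. Your concern about the black-box invocation is addressed in the paper by stating the CLLR result explicitly as Lemma~\ref{thm:1}, and the weight $\hP(i\gamma_\chi)\hP(i\gamma'_\chi)$ indeed appears exactly as you need (it is built into the definitions of $N_\Phi(Q,\beta)$, $N_\Phi(Q)$, and $N_\Phi^*(Q)$, which actually makes the diagonal/off-diagonal split cleaner than in the $t$-aspect case since no $w(\gamma'-\gamma)\approx 1$ approximation is required).
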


Define the $q$-analogue of Montgomery's  function $F(\alpha)$ by 
\[
F_{\Phi}(\alpha) := F_{\Phi}(\alpha,Q) = \frac{1}{N_\Phi (Q)}  \sum_{q} \frac{W(q/Q)}{\varphi(q)} {\sumstar_{\chi \,(\text{mod }{q})}}  \left| \sum_{\gamma_\chi } \hP \left( i\gamma_\chi \right)Q^{i \alpha \gamma_\chi }\right|^2.
\]
Modifying the asymptotic large sieve technique in \cite{CIS1}, Chandee, Lee, Liu and Radziwi\l\l \ 
have evaluated $F_{\Phi} (\alpha)$ when $|\alpha| < 2.$ 
\begin{lemma}\label{thm:1} \textup{(cf.  \ }\cite[Theorem 2]{CLLR}\textup{)}
Assume the generalized Riemann hypothesis for Dirichlet $L$-functions. Then, for any $\varepsilon > 0$, the estimate
\begin{align*}
  F_\Phi (\alpha) =& \, \big(1+o(1)\big) \left(  f(\alpha) + \Phi\big(Q^{-|\alpha|} \big)^2 \log Q \left( \frac{1}{ 2 \pi }  \int_{-\infty}^\infty \left| \hP ( it ) \right|^2 \dt    \right)^{-1}  \right) \\
  & \quad + O\Big(  \Phi\big( Q^{- |\alpha|} \big)  \sqrt{  f(\alpha) \log Q } \Big)
\end{align*} 
holds uniformly for $ |\alpha| \leq 2-\varepsilon$ as $ Q \to \infty$, where
$\displaystyle{  f(\alpha ) :=  \begin{cases}
| \alpha |, & \text{ for }  | \alpha |\leq 1, \\
1, & \text{ for }   | \alpha | >1 .
\end{cases}
}$
\end{lemma}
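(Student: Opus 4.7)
The plan is to derive a $q$-analogue of Montgomery's convolution formula (the analog of \eqref{convolution} and \eqref{Mont_formula}) from Lemma \ref{thm:1}, valid for admissible test functions whose Fourier transforms are supported in $[-\Delta,\Delta]$ for any $\Delta<2$, and then apply it to the scaled Beurling-Selberg majorants and minorants $s_{\Delta,\beta}^{\pm}(x)=r_{\Delta\beta}^{\pm}(\Delta x)$ from Section \ref{sec:evaluateMr}. Invoking the explicit formula \eqref{Final_answer_M_s} with $\Delta=2-\varepsilon'$ for arbitrarily small $\varepsilon'>0$ yields $\tfrac12 M(s_{\Delta,\beta}^{+})=\beta-\tfrac14+O(\varepsilon')+\tfrac{1}{2\pi^2\beta}+O(\beta^{-2})$ and $\tfrac12 M(s_{\Delta,\beta}^{-})=\beta-\tfrac34+O(\varepsilon')+\tfrac{1}{2\pi^2\beta}+O(\beta^{-2})$, matching the bounds claimed in the theorem after absorbing $O(\varepsilon')$ into the arbitrary $\varepsilon$.

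The first step is the Fourier inversion identity
\begin{equation*}
\frac{1}{N_\Phi(Q)}\sum_q\frac{W(q/Q)}{\varphi(q)}\sumstar_{\chi\,(\mathrm{mod}\,q)}\sum_{\gamma_\chi,\gamma'_\chi} R\!\left((\gamma'_\chi-\gamma_\chi)\tfrac{\log Q}{2\pi}\right)\hP(i\gamma_\chi)\hP(i\gamma'_\chi)=\int_{-\infty}^{\infty}\widehat R(\alpha)\,F_\Phi(\alpha)\,\dalpha,
\end{equation*}
which follows directly from the definition of $F_\Phi$. Inserting Lemma \ref{thm:1} splits the right-hand side into an ``arithmetic'' piece $\int \widehat R(\alpha)f(\alpha)\,\dalpha$ and a ``diagonal'' piece $C_\Phi^{-1}\int \widehat R(\alpha)\Phi(Q^{-|\alpha|})^2(\log Q)\,\dalpha$, where $C_\Phi=\tfrac{1}{2\pi}\int|\hP(it)|^2\,\dt$, plus an error that is $o(1)$ by Cauchy-Schwarz. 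The Mellin-Plancherel identity together with the symmetry $\Phi(x)=\Phi(1/x)$ gives $C_\Phi=\int_0^{\infty}\Phi(u)^2\,\tfrac{\du}{u}$; then the substitution $u=Q^{-|\alpha|}$ shows that the diagonal kernel is an approximate identity at $\alpha=0$ with total mass one, contributing $\widehat R(0)$ in the limit. For the arithmetic piece, splitting $f(\alpha)=|\alpha|\chi_{[-1,1]}(\alpha)+\chi_{\{1<|\alpha|\le\Delta\}}(\alpha)$ and applying Plancherel to the $(1-|\alpha|)_{+}$ portion gives $\int\widehat R(\alpha)f(\alpha)\,\dalpha=R(0)-\widehat R(0)+M(R)$. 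Summing, we arrive at the $q$-Montgomery formula
\begin{equation*}
\lim_{Q\to\infty}\frac{1}{N_\Phi(Q)}\sum_q\frac{W(q/Q)}{\varphi(q)}\sumstar_\chi\sum_{\gamma_\chi,\gamma'_\chi} R\!\left((\gamma'_\chi-\gamma_\chi)\tfrac{\log Q}{2\pi}\right)\hP(i\gamma_\chi)\hP(i\gamma'_\chi)=R(0)+M(R),
\end{equation*}
valid whenever $\widehat R$ is supported in $[-\Delta,\Delta]$ with $\Delta<2$.

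With this formula in hand, the argument parallels Theorem \ref{Intro_thm1_Gallagher} and the proof of Theorem \ref{Gallagher} in Section \ref{amf}. The diagonal-in-list-index contribution of the double sum equals $s_{\Delta,\beta}^{\pm}(0)\,N_\Phi^*(Q)$. Using $s_{\Delta,\beta}^{+}(x)\ge\chi_{[-\beta,\beta]}(x)\ge s_{\Delta,\beta}^{-}(x)$, the nonnegativity of $s_{\Delta,\beta}^{+}$, the positivity of $\hP(i\gamma_\chi)$, $s_{\Delta,\beta}^{+}(0)\ge 1$, and $N_\Phi^*(Q)\ge N_\Phi(Q)$, we derive
\begin{equation*}
\limsup_{Q\to\infty}\frac{N_\Phi(Q,\beta)}{N_\Phi(Q)}\le\tfrac12 M(s_{\Delta,\beta}^{+}),
\end{equation*}
and analogously, under the hypothesis $N_\Phi^*(Q)\sim N_\Phi(Q)$ (which also dominates the coincident-ordinate cross-pair terms that arise in the minorant case),
\begin{equation*}
\liminf_{Q\to\infty}\frac{N_\Phi(Q,\beta)}{N_\Phi(Q)}\ge\tfrac12 M(s_{\Delta,\beta}^{-}).
\end{equation*}
Inserting \eqref{Final_answer_M_s} with $\Delta$ arbitrarily close to $2$ completes the proof.

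The main obstacle is the justification of the $q$-Montgomery formula from Lemma \ref{thm:1}. The two key technical points are (i) identifying the normalizing constant $C_\Phi$ with $\int_0^{\infty}\Phi(u)^2\,\du/u$ so that $C_\Phi^{-1}\Phi(Q^{-|\alpha|})^2\log Q$ acts as an approximate identity against $\widehat R$, and (ii) controlling the $O(\Phi(Q^{-|\alpha|})\sqrt{f(\alpha)\log Q})$ error from Lemma \ref{thm:1} against $\widehat R$, where the change of variables $u=Q^{-|\alpha|}$ produces the bound $\int\Phi(Q^{-|\alpha|})\,\dalpha\ll(\log Q)^{-1}$ and hence an error of size $O((\log Q)^{-1/2})=o(1)$ via Cauchy-Schwarz. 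A subsidiary technical point is the handling of the coincident-ordinate cross-pair contributions in the minorant inequality; these are absorbed into the $o(1)$ error under the simplicity hypothesis $N_\Phi^*(Q)\sim N_\Phi(Q)$.
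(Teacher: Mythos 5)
Your proposal does not prove the statement in question. The statement to be established is Lemma \ref{thm:1} itself: the asymptotic evaluation of Montgomery's $q$-analogue function $F_\Phi(\alpha)$, uniformly in the extended range $|\alpha| \le 2-\varepsilon$, under GRH. Your argument instead \emph{assumes} this lemma as an input and uses it (via a convolution identity and the scaled Beurling--Selberg functions $s_{\Delta,\beta}^{\pm}$) to deduce Theorem \ref{q theorem}. That deduction is essentially the content of Section \ref{Sec_Q_analogue} of the paper, but it is the application of the lemma, not a proof of it; relative to the assigned statement the argument is circular, since nothing in the proposal establishes any property of $F_\Phi(\alpha)$ beyond what the lemma already asserts.

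A genuine proof of Lemma \ref{thm:1} is of a completely different nature and is not attempted anywhere in your write-up. One must start from the definition of $F_\Phi(\alpha)$ as a weighted average over moduli $q$ and primitive characters $\chi$ of $\big|\sum_{\gamma_\chi} \hP(i\gamma_\chi) Q^{i\alpha\gamma_\chi}\big|^2$, apply the explicit formula to convert the sums over zeros of $L(s,\chi)$ into sums over primes twisted by $\chi$, use orthogonality of characters, and then—crucially for the range $1 < |\alpha| \le 2-\varepsilon$, which is precisely what makes the lemma stronger than the classical $t$-aspect result \eqref{F alpha}—control the off-diagonal terms by the asymptotic large sieve of Conrey, Iwaniec and Soundararajan. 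This is why the paper does not prove the lemma at all but cites it as Theorem 2 of Chandee, Lee, Liu and Radziwi\l\l\ \cite{CLLR}; the diagonal term $\Phi\big(Q^{-|\alpha|}\big)^2 \log Q \big( \tfrac{1}{2\pi}\int_{-\infty}^{\infty} |\hP(it)|^2\,\dt \big)^{-1}$ and the error term $O\big(\Phi(Q^{-|\alpha|})\sqrt{f(\alpha)\log Q}\big)$ emerge from that analysis, and none of it can be recovered from the soft manipulations (Plancherel, approximate identity, Cauchy--Schwarz) in your proposal. In short, you have reproduced the paper's derivation of Theorem \ref{q theorem} from the lemma, but the lemma itself remains unproved.
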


\subsection{Proof of Theorem \ref{q theorem}}

The key ingredients are Lemma \ref{thm:1} and the convolution identity
\begin{equation*} \label{plugin}
\frac{1}{N_\Phi (Q)} \sum_{q} \frac{W(q/Q)}{\varphi(q)}
\sumstar_{\chi \, \text{(mod }{q})} \sum_{\gamma_\chi, \gamma'_\chi}
R \bigg ( \frac{(\gamma_\chi \! -\! \gamma'_\chi) \log Q}{2\pi} \bigg )
\hP (i \gamma_\chi)  \hP (i \gamma'_\chi) =
\int_{-\infty}^{\infty} F_{\Phi}(\alpha) \, \widehat{R}(\alpha) \, \dalpha.
\end{equation*}
To obtain the bounds for $N_\Phi(Q, \beta)$ in Theorem \ref{q theorem}, we again use the functions 
\[
s^\pm_{\Delta, \beta}(x) = r^\pm_{\Delta\beta}(\Delta x).
\]
As stated in \textsection \ref{sec:evaluateMr}, these functions are a majorant and a minorant of $\chi_{[-\beta,\beta]}$ of exponential type $2\pi \Delta$, and thus with Fourier transform supported in $[-\Delta,\Delta]$. Using arguments similar to those in \textsection \ref{amf}, for any fixed $\beta>0$, we deduce that
\[
\limsup_{Q\to\infty} \frac{N_\Phi(Q,\beta)}{N_\Phi(Q)}   \leq \frac{1}{2} M(s^+_{\Delta, \beta}) 
\]
and, if $N_{\Phi}^*(Q) \sim N_{\Phi}(Q)$ as $Q \to \infty$, that 
\[
\liminf_{Q\to\infty} \frac{N_\Phi(Q,\beta)}{N_\Phi(Q)}  \geq \frac{1}{2} M(s^-_{\Delta, \beta}).
\]
Theorem \ref{q theorem} follows from these estimates by using \eqref{Final_answer_M_s} with $\Delta=2-\varepsilon$.


\section*{Appendix A}\label{App}
Here we prove the following result that was left open in the introduction.
\begin{proposition}
The entire function $E(z)$ defined in \eqref{Intro_Def_E_special} satisfies properties \eqref{Intro_HB_cond} and \eqref{Intro_rep_kernel}.
\end{proposition}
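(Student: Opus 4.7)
My plan is to invoke de Branges's structure theorem \cite[Theorem 23]{B} to obtain \emph{some} Hermite--Biehler function $E_0$ with $\H = \H(E_0)$ isometrically, and then to show that the specific $E$ from \eqref{Intro_Def_E_special} is related to $E_0$ by an indefinite-unitary change of basis that preserves both the reproducing-kernel identity and the Hermite--Biehler inequality.

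To set this up I would first verify the three de Branges axioms for $\H$. Continuity of point evaluations is inherited from the equivalence of the $\H$-norm and the Paley--Wiener norm (Lemma \ref{Lem_equiv_norms}); invariance under $F \mapsto F^*$ is immediate from the symmetry and real-valuedness of $d\mu$; and the Blaschke-type division axiom reduces to the observation that, if $F \in \H$ with $F(w_0) = 0$, then $G(z) = F(z)(z - \ov{w_0})/(z - w_0)$ is entire of the same exponential type as $F$ and satisfies $|G(x)| = |F(x)|$ on $\R$ (since $|x - \ov{w_0}| = |x - w_0|$ for real $x$), so $G \in \H$ with $\|G\|_{\H} = \|F\|_{\H}$. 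Theorem~23 of \cite{B} then produces a Hermite--Biehler $E_0$ with
\begin{equation}\label{appA-E0}
L(w,z) \,=\, E_0(z) E_0^*(\ov w) - E_0^*(z) E_0(\ov w),
\end{equation}
and in particular $L(i,i) = |E_0(i)|^2 - |E_0(-i)|^2 > 0$, which makes $E(z) = L(i,z)/L(i,i)^{1/2}$ a well-defined entire function.

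Next I would read off from \eqref{appA-E0} with $w = i$ that
\[
E(z) \,=\, \alpha E_0(z) - \beta E_0^*(z), \qquad \alpha = \ov{E_0(i)}/L(i,i)^{1/2}, \quad \beta = E_0(-i)/L(i,i)^{1/2},
\]
where the normalization forces $|\alpha|^2 - |\beta|^2 = 1$. A short bilinear expansion, in which the mixed terms involving $\alpha\ov\beta$ and $\ov\alpha\beta$ cancel against each other, yields
\[
E(z)E^*(\ov w) - E^*(z)E(\ov w) \,=\, (|\alpha|^2 - |\beta|^2)\bigl[E_0(z)E_0^*(\ov w) - E_0^*(z)E_0(\ov w)\bigr] \,=\, L(w,z),
\]
which is precisely \eqref{Intro_rep_kernel}. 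Specializing this identity to $w = z$ and using $E^*(\ov z) = \ov{E(z)}$ and $E(\ov z) = \ov{E^*(z)}$, I would then deduce
\[
|E(z)|^2 - |E(\ov z)|^2 \,=\, L(z,z) \,=\, 4\pi\,\im(z)\,K(z,z),
\]
which is strictly positive for $z \in \C^+$ because $\im(z) > 0$ and $K(z,z) > 0$; the latter holds since $\H$ coincides as a set with the Paley--Wiener space and therefore contains functions that do not vanish at $z$. This is the Hermite--Biehler inequality \eqref{Intro_HB_cond}.

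The only substantive step in this outline is the verification of the Blaschke-type division axiom, which hinges on the elementary fact that $(z - \ov{w_0})/(z - w_0)$ has modulus one on the real line. Once \eqref{appA-E0} is available, the passage from $E_0$ to the specific $E$ in \eqref{Intro_Def_E_special} is a purely formal calculation in which the single relation $|\alpha|^2 - |\beta|^2 = 1$ does all the work.
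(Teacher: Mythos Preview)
Your proof is correct. The paper's own argument is very close in spirit: it too verifies the Blaschke division property and invokes \cite[Theorem~23]{B}, and it too deduces the Hermite--Biehler inequality from $|E(z)|^2 - |E(\ov z)|^2 = L(z,z) = 4\pi\,\im(z)\,K(z,z) > 0$, with $K(z,z)>0$ justified exactly as you do. The one genuine difference is in how the kernel identity \eqref{Intro_rep_kernel} is obtained. Rather than producing an abstract $E_0$ from the full conclusion of Theorem~23 and then relating the specific $E$ to $E_0$ by an indefinite-unitary (``$SU(1,1)$'') change of basis, the paper cites only an intermediate identity from the \emph{proof} of Theorem~23, namely
\[
L(w,z) \;=\; \frac{L(\alpha,z)\,L(w,\alpha)}{L(\alpha,\alpha)} \;+\; \frac{L(\ov\alpha,z)\,L(w,\ov\alpha)}{L(\ov\alpha,\ov\alpha)}
\]
for any nonreal $\alpha$, and then observes (using $L(\ov\alpha,\ov\alpha) = -L(\alpha,\alpha)$ together with the conjugation symmetries $K(\ov w,z)=\ov{K(w,\ov z)}$ and $K(w,z)=\ov{K(z,w)}$) that this identity is exactly \eqref{Intro_rep_kernel} once one sets $\alpha=i$ and $E(z)=L(i,z)/L(i,i)^{1/2}$. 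Your route has the conceptual advantage of explaining \emph{why} this particular $E$ works --- it is an indefinite-unitary rotation of any de~Branges structure function for $\H$, and such rotations preserve the kernel form --- while the paper's route avoids the intermediary $E_0$ altogether and works directly with $L$.
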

\begin{proof}
 We start by observing that $K(w,z)$ defined by Theorem \ref{Intro_HS_Thm1_RP} verifies the following properties:
\begin{enumerate}
\item[(i)] $K(w,w) > 0$ for all $w \in \C$. In fact, if we had $K(w,w)=0$ for some $w \in \C$, this would imply that $f(w) = 0$ for every $f \in \H$. This is a contradiction.
\smallskip
\item[(ii)] $K(\ov{w},z) = \ov{K(w, \ov{z})}$ for all $w,z \in \C$. This is a direct verification.
\smallskip
\item[(iii)] $K(w,z) = \ov{K(z,w)}$. This follows from the reproducing kernel property:
\begin{align*}
K(w,z) = \langle K(w,\cdot), K(z, \cdot) \rangle_{\H} = \ov{\langle K(z,\cdot), K(w, \cdot) \rangle_{\H}} = \ov{K(z,w)}.
\end{align*}
\end{enumerate}
Whenever $f \in \H$ has a nonreal zero $w$, the function $z\mapsto f(z)(z-\ov{w})/(z-w)$ belongs to $\H$ and has the same norm as $f$. From the first part of the proof of \cite[Theorem 23]{B} we have that $L(w,z) = 2\pi i (\overline{w} - z) K(w,z)$ satisfies the identity
\begin{equation}\label{App_1}
L(w,z) = \frac{L(\alpha, z)L(w, \alpha)}{L(\alpha, \alpha)} + \frac{L(\ov{\alpha},z) L(w, \ov{\alpha})}{L(\ov{\alpha}, \ov{\alpha})}
\end{equation}
for all nonreal $\alpha$ and $w,z \in \C$. From property (ii) above and the fact that $K(\ov{\alpha}, \ov{\alpha})$ is real, we get that 
\begin{equation}\label{App_2}
L(\alpha, \alpha) = - L(\ov{\alpha}, \ov{\alpha}).
\end{equation}
Taking $w = z$ in \eqref{App_1}, and using \eqref{App_2} and properties (ii) and (iii) above, we get
\begin{equation}\label{App_3}
L(z,z) = \frac{|L(\alpha, z)|^2}{L(\alpha, \alpha)} - \frac{|L(\alpha, \ov{z})|^2}{L(\alpha, \alpha)}.
\end{equation}
If we consider any (fixed) $\alpha \in \C^{+}$, we have $L(\alpha, \alpha) >0$, and we can define the entire function (of the variable $z$)
\begin{equation*}
E(\alpha,z) :=  \frac{L(\alpha,z)}{L(\alpha, \alpha)^{\frac12}}.
\end{equation*}
From \eqref{App_3} and property (i), we find that
\begin{equation*}
|E(\alpha, z)|^2 - |E(\alpha,\ov{z})|^2 = L(z,z) = 4\pi\,\im(z)\, K(z,z) >0
\end{equation*}
for all $z \in \C^+$. This is the Hermite-Biehler property. The identity
\begin{equation*}
L(w,z) = E(\alpha, z) \ov{E(\alpha, w)} -  \overline{E(\alpha, \overline{z})}E(\alpha, \ov{w})
\end{equation*}
is equivalent to \eqref{App_1}. In our particular case, we simply choose $\alpha = i$.
\end{proof}


\section*{Acknowledgements.} 
\noindent EC acknowledges support from CNPq-Brazil grant $302809/2011-2$, and FAPERJ grant $E-26/103.010/2012$. MBM is supported in part by an AMS-Simons Travel Grant and the NSA Young Investigator Grant H98230-13-1-0217. We would like to thank IMPA -- Rio de Janeiro and CRM -- Montr\'{e}al for sponsoring research visits during the development of this project.





\linespread{1.2}

\end{document}